\documentclass[10pt,letterpaper]{amsart}

\usepackage{amsthm,amsmath,amsfonts,amssymb,amscd} 

\usepackage{graphicx} 
\usepackage{booktabs} 
\usepackage{epstopdf} 
\usepackage{bm}       
\usepackage{multicol} 
\usepackage{multirow} 
\usepackage{subfigure} 
\usepackage{tikz}     
\usepackage{enumitem} 
\usepackage[ruled,vlined]{algorithm2e} 
\usepackage{soul}     
\usepackage{siunitx}  
\usepackage[dvipsnames]{xcolor}   
\usepackage{url}
\usepackage{hyperref}

\theoremstyle{plain} 
\newtheorem{theorem}{Theorem}[section] 
\newtheorem{lemma}[theorem]{Lemma} 
\newtheorem{corollary}[theorem]{Corollary} 
\theoremstyle{remark} 
\newtheorem{remark}[theorem]{Remark} 
\newcommand{\vect}[1]{\mathbf{#1}}
\newcommand{\divergence}{\mathrm{div}\,}
\newcommand{\tr}{\mathrm{tr}\,}
\newcommand{\grad}{\mathrm{grad}\,}
\newcommand{\Id}{\mathbb{I}} 
\newcommand{\R}{\mathbb{R}} 
\newcommand{\Rn}{\mathbb{V}} 
\newcommand{\Rsym}{\mathbb{S}} 
\newcommand{\Cel}{\mathcal{C}} 
\newcommand{\Ael}{\mathcal{A}} 

\newcommand{\Th}{\mathcal{T}_h} 
\newcommand{\Faces}{\mathcal{F}_h} 

\newcommand{\Kprod}[2]{( #1, #2 )_{K}}
\newcommand{\pKprod}[2]{\langle #1, #2 \rangle_{\partial K}}

\newcommand{\Thprod}[2]{( #1, #2 )_{\mathcal{T}_h}}
\newcommand{\pThprod}[2]{\langle #1, #2 \rangle_{\partial \mathcal{T}_{h}}}

\usepackage{amssymb}
\usepackage{amsmath}

\usepackage{lineno}

\title[HDG for poroelastic wave propagation]{Hybridizable discontinuous Galerkin methods for poroelastic wave propagation with symmetric stress approximation}

\author{Jeonghun J. Lee}
\thanks{J. J. Lee was supported by the National Science Foundation under Grant DMS-2110781.}
\address{Department of Mathematics, Baylor University, Waco, TX 76798, USA}
\email{jeonghun\_lee@baylor.edu}

\author{Manuel A. S\'anchez}
\thanks{M. A. S\'anchez was partially supported by FONDECYT Regular grant N. 1221189 and by Centro Nacional de Inteligencia Artificial CENIA, FB210017, Basal ANID Chile.}
\address{Instituto de Ingenier\'ia Matem\'atica y Computacional, Facultad de Matem\'aticas y Escuela de Ingenier\'ia, Pontificia Universidad Cat\'olica de Chile, Santiago, Chile}
\email{manuel.sanchez@uc.cl}

\subjclass[2020]{65N12, 65N15}
\keywords{discontinuous Galerkin methods, hybridization, wave equations, error analysis, locking-free}

\begin{document}

\maketitle

\begin{abstract}
    In this paper, we develop hybridized discontinuous Galerkin (HDG) methods for poroelastic wave equations. We first rewrite the governing equations to a first-order symmetric hyperbolic system in order to use dual mixed formulations for discretization. Subsequently, we combine two HDG approaches in the discretization of the system, the $\text{HDG}+$ method for the linear elasticity equations and the $\text{LDG-H}$ method for the diffusion equations, with adjustments for the poroelastic wave equations. In our proposed HDG methods, the numerical approximation of the stress tensor is strongly symmetric and the convergence of the errors are robust for nearly incompressible materials. Upon performing static condensation, the system retains numerical trace variables solely for the solid displacement and the fluid pressure. We provide comprehensive error analyses for both the semidiscrete formulation and the Crank--Nicolson time-stepping scheme. Finally, extensive numerical examples illustrate  optimal convergence results and simulate different poroelastic wave propagation scenarios relevant in the literature.
\end{abstract}

\section{Introduction}
\label{sec:introduction}

Poroelastic wave equations describe the propagation of mechanical disturbances in fluid-saturated porous media, by the interaction between the solid matrix and the interstitial fluid. The theory was originally developed by Biot in the 1950s extending classical elasticity by incorporating fluid flow through deformable porous structures (\cite{Biot-low-frequency,Biot-high-frequency}). It has become an essential framework in fields such as geophysics, petroleum engineering (e.g., \cite{wang2000theory,coussy2004poromechanics,cheng2016poroelasticity,selvadurai2016thermo}), and biomechanics (e.g., \cite{Mow-Kuei-Lai-Armstrong:1980,Cowin:1999,ChouEtAl2016,VardakisEtAl2016,LeeAtEl:2019}).

In the theory of poroelastic waves, the medium for wave propagation is a two-phase continuum consisting of an elastic solid skeleton and a viscous fluid that occupies the pore space. The governing equations consist of coupled partial differential equations: the balance of linear momentum for the mixture of solid and fluid phases and a mass conservation equation accounting for fluid pressure and flux. The interaction of a solid skeleton and an interstitial fluid can generate richer phenomena of wave propagation than pure elastic or acoustic waves. 

Numerical methods for poroelastic waves are rich in literature including finite difference methods (e.g., \cite{Garg-Neyfeh-Good:1974,Dai:1996,Masson-Pride-Nihei:2006,Chiavassa-Lombard:2011}), finite volume methods (e.g., \cite{Lemoine-Ou-LeVeque:2013,Lemoine-Ou:2014}), finite element methods (e.g., \cite{Santos:1986b,Zienkiewicz-Shiomi:1984,Zienkiewicz:1990,laPuente-Dumbser-Kaser-Igel:2008}) but this list of previous works is not exhaustive. We refer to \cite{Carcione:2010} for more complete list of earlier works on computational methods for poroelasticity. There are more recent works on poroelastic waves with advanced finite element methods such as discontinuous Galerkin methods (\cite{Shukla-Chan-deHoop-Jaiswal:2020}), space-time finite element methods (\cite{Bause-Anselmann:2025,Kraus-Lymbery-Osthues:2025}), and virtual element methods (\cite{Chen-Liu-Zhang-Nie:2025}). 

In this paper, we consider hybridized discontinuous Galerkin (HDG) methods (\cite{Cockburn:2009a}) for poroelastic wave equations. The classical discontinuous Galerkin (DG) methods provide a finite element framework for discretizing partial differential equations (PDEs) without strictly enforcing the continuity conditions of standard finite elements; this enables their application to a wide class of PDEs and, in particular, offers advantages in wave propagation problems. However,  the computational costs of classical DG methods are expensive. HDG methods circumvent this drawback by allowing the elimination of internal element degrees of freedom by means of static condensation using the element-wise Schur complement. As a result, the number of globally coupled degrees of freedom is significantly reduced. Additionally, HDG methods often provide a way to obtain numerical solutions that fulfill important physical laws or higher-order convergence. Since wave equations often need high-order methods for high resolution of numerical solutions, and more computational cost reduction by static condensation can be made for higher order polynomials, HDG methods are well-suited for wave equations. Some previous works which devised HDG methods for various wave equations for acoustics, elastodynamics, electromagnetics can be found in \cite{Cockburn-Quenneville:2014,Nguyen-Peraire:2016,Cockburn-Fu-Sanchez-Sayas:2018,Sanchez-Du-Cockburn-Nguyen-Peraire:2022,Fernandez-Nguyen-Peraire:2018,Sanchez-Nguyen-Peraire:2017,Sanchez-Cockburn-Nguyen-Peraire:2021,Cockburn-Du-Sanchez:2023,Lee-Bolanos:2025}. 

An HDG method for time-harmonic poroelastic wave equations was first studied numerically in \cite{Barucq:2021}. For dynamic poroelasticity an HDG method was also proposed very recently in \cite{Meddahi-poroelastodynamic:2025} using a first-order system with the elastic stress, the solid velocity, the fluid velocity, and the fluid pressure as unknowns. In this paper we propose a new family of HDG methods for a first-order formulation of dynamic poroelasticity problems. In contrast to \cite{Meddahi-poroelastodynamic:2025} we use the total stress, instead of the elastic stress, as one of primary variables in the formulation because this allows more natural imposition of traction boundary conditions. The main contribution of our work is devising new efficient HDG methods for dynamic poroelasticity problems based on more recent advances in HDG methods. For this we use advanced HDG methods for elasticity (\cite{Qiu-Shen-Shi:2018,Du-Sayas:2020a,Du-Sayas:2021}, called {\it HDG+ method}) which were inspired by the work in \cite{Lehrenfeld-Schoberl:2016,Oikawa:2015,Oikawa:2016}. The main feature of $HDG+$ method is that the polynomial degrees of the hybridization variables are same or lower than the ones of internal variables without losing optimal convergence of the internal variables. A detailed comparison between existing HDG methods and our HDG methods for poroelastic wave problems will be given in Remark~\ref{rmk:comparison}. 

The paper is organized as follows. In Section~\ref{sec:preliminaries} the governing equations with first order formulation, notation for function spaces and finite element discretization, are given. In Sections~\ref{sec:semidiscrete-discretization} and \ref{sec:semidiscrete-error-analysis}, semidiscrete discretization and the error estimate of semidiscrete solutions are given. Fully discrete scheme and its error analysis are given in Section~\ref{sec:fully-discrete-error-analysis}. We include numerical test results in Section~\ref{sec:num_test}, illustrating convergence rates of errors and wave propagation in more realistic settings of physical parameters. We finish this paper with concluding remarks in Section~\ref{sec:conclusion}.

\section{Preliminaries}
\label{sec:preliminaries}

\subsection{Governing equations}\label{ss:governing-equations}

Let $\Omega \subset \mathbb{R}^{d}$ be a polygonal (if $d=2$) or
polyhedral (if $d=3$) domain. We define $\mathbb{V}$, $\mathbb{S}$ by the spaces of $d$-column vectors and $d\times d$ symmetric matrices. Let 
\begin{align*}
	\vect{u}_{s}, \vect{v}_{f}, \vect{f}: (0,\infty) \times \Omega \to \Rn    
\end{align*}
be the solid displacement, the seepage velocity, the body force, and 
\begin{align*}
	p, g:(0, \infty) \times \Omega\to \R
\end{align*}
be the pore pressure and the source/sink density function of the fluid. The governing equations of low-frequency poroelastic wave equations are
\begin{subequations}
	\label{eq:modelproblem}
	\begin{alignat}{4}
		\label{eq:modelproblem-eq1}
		\rho_{11} \ddot{\vect{u}}_{s} + \rho_{12} \dot{\vect{v}}_{f} - \divergence (\Cel \varepsilon(\vect{u}_{s}) - \alpha p \Id ) & = \vect{f} &&\qquad \text{in }\Omega,\\
		\label{eq:modelproblem-eq2}
		\rho_{12} \ddot{\vect{u}}_{s} + \rho_{22} \dot{\vect{v}}_{f} + \frac{\eta}{\kappa} \vect{v}_{f} + \grad p & = 0&&\qquad \text{in }\Omega,\\
		\label{eq:modelproblem-eq3}
		s_{0} \dot{p} + \divergence \vect{v}_{f} + \alpha \divergence \dot{\vect{u}}_{s} & = g&&\qquad \text{in }\Omega,
	\end{alignat}
\end{subequations}
where $\dot{w}$ and $\ddot{w}$ are the first and second order partial derivatives of function $w$ in time. Here $\Cel$ is the stiffness tensor of poroelastic medium, $\varepsilon$ is the symmetric gradient operator, $\mathbb{I}$ is the $d\times d$ identity matrix, $\rho_{ij}$ for $1\le i, j \le 2$ are the coefficients determined by the solid/fluid densities, $\alpha$ is the Biot--Willis coefficient, $\eta$ is the dynamic viscosity, $\kappa$ is the permeability, and $s_0 \ge 0$ is the constrained specific storage coefficient. In these equations, $\divergence$ in \eqref{eq:modelproblem-eq1} is the row-wise divergence operator whereas $\grad$ in \eqref{eq:modelproblem-eq2} is an operator mapping to $\mathbb{V}$-valued functions.

We assume that there exists $\rho_0>0$ such that 
\begin{align}
	\label{eq:rho-coercivity}
	\rho_{11}\rho_{22}-\rho_{12}^2 \ge \rho_0 >0.
\end{align}
We also assume that $\Cel$ is bounded and coercive on $L^2(\Omega; \mathbb{S})$. Defining $\Ael = \Cel^{-1}$, we also assume that there exists $C_{\Ael}>0$ such that 
\begin{align*}
	\int_{\Omega} \Ael r : s \,dx \le C_{\Ael} \| r\|_0 \|s\|_0 \qquad \forall r, s \in L^2(\Omega; \mathbb{S}) .
\end{align*}
On the contrary, we assume that $\Ael$ is positive in the sense that
\begin{multline}
	\label{eq:A-positive}
	\int_{\Omega} \Ael r : r \,dx \ge 0 \text{ for } r \in L^2(\Omega; \mathbb{S}) 
    \\
    \text{ and } \quad r = 0 \in L^2(\Omega; \mathbb{S}) \quad \text{ if } \int_{\Omega} \Ael r : r \,dx = 0 
\end{multline}
but we do not assume that $\Ael$ is coercive with a coercivity constant because this coercivity assumption does not hold for nearly incompressible materials.

Let us introduce two new variables \(\sigma := \Cel \varepsilon(\vect{u}_{s}) - \alpha p \Id\), and \(\vect{v}_{s}:=\dot{\vect{u}}_{s} .\) Then, it follows that 
\[
\Ael(\dot{\sigma} + \alpha \dot{p} \Id) - \varepsilon(\vect{v}_{s}) = 0, \quad \tr \Ael(\dot{\sigma} + \alpha \dot{p} \Id) = \divergence \vect{v}_{s} = \divergence \dot{\vect{u}}_{s},
\]
and hence we obtain the first-order symmetric system
\begin{subequations}\label{eq:1stordersystem}
	\begin{alignat}{4}
		\Ael(\dot{\sigma} + \alpha \dot{p} \Id) - \varepsilon(\vect{v}_{s}) &= 0&\qquad \text{in }\Omega,\\
		\rho_{11} \dot{\vect{v}}_{s} + \rho_{12} \dot{\vect{v}}_{f} - \divergence \sigma & = \vect{f}&\qquad \text{in }\Omega,\\
		\rho_{12} \dot{\vect{v}}_{s} + \rho_{22} \dot{\vect{v}}_{f} + \frac{\eta}{\kappa} \vect{v}_{f} + \grad p & = 0&\qquad \text{in }\Omega,\\
		s_{0} \dot{p} + \divergence \vect{v}_{f} + \alpha \tr {\Ael}(\dot{\sigma} + \alpha \dot{p} \Id)  & = g&\qquad \text{in }\Omega.
	\end{alignat}
\end{subequations}
For boundary conditions, suppose that there are two splittings of $\partial \Omega$ for traction-displacement boundary conditions and flux-pressure boundary conditions. More precisely, let $\Gamma_t, \Gamma_d, \Gamma_f, \Gamma_p \subset \partial \Omega$ be open subsets such that 
\begin{align*}
	\Gamma_t \cap \Gamma_d = \emptyset = \Gamma_f \cap \Gamma_p , \qquad 
	\partial \Omega = \overline{\Gamma_t} \cup \overline{\Gamma_d} = \overline{\Gamma_f} \cup \overline{\Gamma_p} , \qquad \Gamma_d, \Gamma_p \not = \emptyset.
\end{align*}
The boundary conditions at time $t$ are imposed by 
\begin{subequations}    
	\label{eq:boundary-conditions}
	\begin{align}
		\label{eq:boundary-condition1}
		\sigma(t) \vect{n} &= \sigma_{N}(t) \quad \text{ on } \Gamma_t, & \vect{v}_{s}(t) &= \vect{v}_{s,D}(t) \quad \text{ on } \Gamma_d, 
		\\
		\label{eq:boundary-condition2}
		\vect{v}_f(t) \cdot \vect{n} &= \vect{v}_{f,N}(t) \quad \text{ on } \Gamma_f & p(t) &= p_D(t) \quad \text{ on } \Gamma_p 
	\end{align}
\end{subequations}
where $\sigma_N,\vect{v}_{s,D}:(0,\infty) \to \mathbb{V}$, $\vect{v}_{f,N}, p_D: (0,\infty) \to \mathbb{R}$ are given boundary data and $\vect{n}$ is the unit outward normal vector field on $\partial \Omega$. 

For simplicity, we will consider only the homogeneous boundary conditions in our analysis but all discussions can be easily extended to nonhomogeneous boundary conditions.

\subsection{Notation, definitions}

For a nonnegative integer $m$, $H^m (\Omega)$ is the Sobolev space on $\Omega$ of $m$-differentiability based on the $L^2$ norm. For a finite dimensional space $\mathbb{X}$, $H^m(\Omega; \mathbb{X})$ is the Sobolev space of $\mathbb{X}$-valued functions. 
For a set $D \subset \Omega$ of positive $d$-dimensional Lebesgue measure, $\| \cdot \|_{m,D}$ and $|\cdot|_{m,D}$ are the $H^m$ norm and $H^m$ semi-norm on $D$. If $D = \Omega$, then we simply use $\| \cdot \|_m$ and $|\cdot|_m$. $H(\divergence, \Omega)$ is the set of functions in $L^2(\Omega; \R^d)$ whose divergence are in $L^2(\Omega)$, and $H(\divergence,\Omega; \Rsym)$ is the subspace of $L^2(\Omega; \Rsym)$ such that each row is in $H(\divergence, \Omega)$.

For a reflexive Banach space $\mathcal{X}$ and $(a, b) \subset \R$, $C^0 (a, b; \mathcal{X})$ denotes the set of functions $f : (a,b) \rightarrow \mathcal{X}$ which are continuous in $t \in (a,b)$. For an integer $m \geq 1$ we define 
\begin{align*}
	C^m ([0,T_0]; \mathcal{X}) := \{ f \, | \, \partial^{i}f/\partial t^{i} \in C^0(a,b;\mathcal{X}), \, 0 \leq i \leq m \},
\end{align*}
where $\partial^i f/\partial t^i$ is the $i$-th Fr\'echet derivative (see e.g., \cite{Yosida-book}). For a function $f : (a,b) \to  \mathcal{X}$, 
\begin{align*}
	\| f \|_{L^r(a,b; \mathcal{X})} := 
	\begin{cases}
		\left( \int_a^b \| f \|_\mathcal{X}^r ds \right)^{1/r}, \quad 1 \leq r < \infty, \\
		\text{esssup}_{t \in (a,b)} \| f \|_\mathcal{X}, \quad r = \infty.
	\end{cases}
\end{align*}
Define the Bochner Sobolev spaces $W^{k,r}(a, b; \mathcal{X})$ for nonnegative integer $k$ and $1 \leq r \leq \infty$ as the closure of $C^k (a,b; \mathcal{X})$ with the norm $\| f \|_{W^{k,r} \mathcal{X}} = \sum_{i=0}^k \| \partial^i f / \partial t^i \|_{L^r \mathcal{X}}$. 
We also adopt $\| f, g \|_\mathcal{X} := \| f \|_\mathcal{X} + \| g \|_\mathcal{X}$ for the norm of a Banach space $\mathcal{X}$.

Let $\mathcal{T}_{h}$ be a triangulation of $\Omega$ by simplices $K\subset \mathbb{R}^d$ ($d=2,3$). For each simplex $K$, we denote its diameter by $h_K$. We call the quantity $h:=\max_{K\in\Th}h_K$ the mesh size. We denote the diameter of a $d$-dimensional simplex $K \in \mathcal{T}_h$ by $h_K$, and define
$h:=\max_{K\in \mathcal{T}} h_K$. Let $\mathcal{F}_{h}$ denote the set of all facets of $\mathcal{T}_h$ and $\Gamma_{h}$ be the union of all facets $F \in \mathcal{F}_{h}$. 

The function spaces for variational equations are 
\begin{alignat*}{4}
	\Sigma &= \{ r \in H(\divergence, \Omega;\Rsym)\;:\; r\vect{n} = 0\text{ on }\Gamma_t \}, &\quad V_{s}&= L^{2}(\Omega;\Rn),  
	\\
	V_{f}  &= \{\vect{w}\in H(\divergence, \Omega)\;:\;\vect{w}\cdot\vect{n} = 0 \text{ on }\Gamma_{f}\}, &\quad Q&= L^{2}(\Omega).
\end{alignat*}
We denote by ${P}_k(K)$ the space of polynomials of degree less than or equal to $k\ge0$ defined on the domain $K$. For a simplex $K$, define $\mathcal{R}_k(\partial K) = \{q \in L^2(\partial K) \,: \, q|_F \in P_k(F), \forall F \in \mathcal{F}_h, F \subset \partial K\}$.  The finite-dimensional spaces for finite element discretization are
\begin{subequations}
	\label{eq:HDGspaces}
	\begin{align}
		\Sigma_{h} &= \{ \omega \in L^2(\Omega; \mathbb{S})\,:\, \omega|_{K} \in P_{k}(K; \mathbb{S}) \; \forall K \in \mathcal{T}_h \}, 
		\\
		V_{s,h} &= \{ \vect{w} \in L^2(\Omega; \mathbb{V})\,:\, \vect{w}|_{K} \in P_{k+1}(K; \mathbb{V})  \; \forall K \in \mathcal{T}_h\}, 
		\\
		V_{f,h} &= \{ \vect{w} \in L^2(\Omega; \mathbb{V})\,:\, \vect{w}|_{K} \in P_{k}(K; \mathbb{V})  \; \forall K \in \mathcal{T}_h\}, 
		\\
		Q_{h} &= \{ q \in L^2(\Omega)\,:\, q|_{K} \in P_{k}(K)  \; \forall K \in \mathcal{T}_h\}, 
		\\
		\widehat{V}_{s,h} &= \{ \widehat{\vect{w}} \in L^2(\Gamma_h; \mathbb{V})\,:\, \widehat{\vect{w}}|_{F} \in P_{k}(F; \mathbb{V})  \; \forall F \in \mathcal{F}_h, \widehat{\vect{w}}|_{\Gamma_d} = \vect{0} \}, 
		\\
		\widehat{Q}_{h} &= \{ \widehat{q} \in L^2(\Gamma_h)\,:\, \widehat{q}|_{F} \in P_{k}(F)  \; \forall F \in \mathcal{F}_h, \widehat{q}|_{\Gamma_p} = 0\} .
	\end{align}
\end{subequations}
We define $P_{\widehat{V}_s}$ and $P_{\widehat{Q}}$ by the $L^2$ orthogonal projections from $L^2(\Gamma_{h}; \mathbb{V})$ to $\widehat{V}_{s,h}$ and from $L^2(\Gamma_{h})$ to $\widehat{Q}_{h}$. 

For each element $K$ of the triangulation $\Th$, we define the following local inner products, and their associated norms:
\begin{alignat*}{8}
	\Kprod{\sigma}{r} &:= \int_K \sigma : r, 
	&&\quad
	\Kprod{\vect{v}}{\vect{w}} &:= \int_K \vect{v} \cdot \vect{w}, 
	&\quad
	\Kprod{p}{q} &&:= \int_K p q , 
	&&\quad
	\pKprod{\eta}{\mu}&&:= \int_{\partial K} \eta \mu,
	&&\quad
\end{alignat*}
The corresponding global inner products are defined by
\begin{align*}
	\Thprod{\sigma}{r} :=
	\sum_{K \in \Th} \Kprod{\sigma}{r}, \quad
	\Thprod{\vect{v}}{\vect{w}} := \sum_{K \in \Th} \Kprod{\vect{v}}{\vect{w}}, \quad
	\pThprod{\eta}{\mu}:=
	\sum_{K \in \Th} \pKprod{\eta}{\mu}.
\end{align*}

\section{Semidiscrete space discretization}
\label{sec:semidiscrete-discretization}

{
In this section, we present the semidiscrete discretization of the first-order system \eqref{eq:1stordersystem} using a Hybridizable Discontinuous Galerkin approach in space and continuous in time. We begin by introducing the detailed HDG formulation in Section \ref{ss:HDGformulation}, including the definition of the numerical traces and the assumptions on the stabilization functions. We also prove here the consistency property of the semidiscrete approximation. Since the resulting system is a differential-algebraic equation, we subsequently discuss the necessary compatibility conditions and approximation properties for the initial data in Section \ref{ss:initialdata} to guarantee the well-posedness of the problem. Finally, in Section \ref{ss:stability}, we establish a general energy equality for the semidiscrete scheme, which will serve as the foundational tool for the a priori error estimates presented in the subsequent sections.}
\subsection{The HDG formulation}\label{ss:HDGformulation}
The semidiscrete problem reads as find $(\sigma_{h}, \vect{v}_{s,h}, \vect{v}_{f,h}, p_{h}) \in C^1(0,T; \Sigma_{h} \times V_{s,h} \times V_{f,h} \times Q_{h} )$, and $(\widehat{\vect{v}}_{s,h}, \widehat{p}_{h}) \in C^0(0,T;  \widehat{V}_{s,h}\times \widehat{Q}_{h})$
%
%
such that 
\begin{subequations}
	\label{eq:semidiscrete-eqs}
	\begin{align}
		\label{eq:semidiscrete-eq1}
		\Thprod{\Ael(\dot{\sigma}_{h} + \alpha \dot{p}_{h} \Id)}{r} +\Thprod{\divergence r}{\vect{v}_{s,h}} -\pThprod{\widehat{\vect{v}}_{s,h}}{r \vect{n}} &= 0 ,
		\\ 
		\label{eq:semidiscrete-eq2}
		\Thprod{\rho_{11}\dot{\vect{v}}_{s,h} + \rho_{12}\dot{\vect{v}}_{f,h}}{\vect{w}_{s}} + \Thprod{\sigma_{h}}{\grad \vect{w}_{s}}&
        \\
        \notag
        \quad - \pThprod{\widehat{\sigma_{h} \vect{n}}}{\vect{w}_{s}}  &= \Thprod{\vect{f}}{\vect{w}_{s}}, 
		\\
		\label{eq:semidiscrete-eq3}
		\Thprod{\rho_{12}\dot{\vect{v}}_{s,h} + \rho_{22}\dot{\vect{v}}_{f,h}}{\vect{w}_{f}} + \Thprod{\frac{\eta}{\kappa} \vect{v}_{f,h}}{\vect{w}_{f}} - \Thprod{p_{h} }{\divergence\vect{w}_{f}} &
        \\
        \notag
        \quad +\pThprod{\widehat{p}_{h}}{\vect{w}_{f}\cdot\vect{n}} &= 0, 
		\\
		\label{eq:semidiscrete-eq4}
		\Thprod{s_{0}\dot{p}_{h}}{q} + \Thprod{\Ael(\dot{\sigma}_{h}+\alpha \dot{p}_{h}\Id)}{\alpha q\Id} - \Thprod{\vect{v}_{f,h}}{\grad q}&
        \\
        \notag
        \quad + \pThprod{\widehat{\vect{v}_{f,h}\cdot \vect{n}}}{q} &= \Thprod{g}{q},
		\\
		\label{eq:semidiscrete-eq5}
		\pThprod{\widehat{\sigma_{h} \vect{n}} }{\widehat{\vect{w}}_s} &= 0,
		\\
		\label{eq:semidiscrete-eq6}
		\pThprod{\widehat{\vect{v}_{f,h}\cdot \vect{n}}}{\widehat{q}} &= 0
	\end{align}
	for any $(r, \vect{w}_s, \vect{w}_f, q, \widehat{\vect{w}}_s, \widehat{q}) \in \Sigma_{h} \times V_{s,h} \times V_{f,h} \times Q_{h} \times \widehat{V}_{s,h}\times \widehat{Q}_{h}$
	and for all $0<t<T$. The HDG numerical traces $\widehat{\sigma_h \vect{n}}$ and $\widehat{\vect{v}_{f,h} \cdot \vect{n}}$ are defined by
	\begin{align}
		\label{eq:numerical-traction}
		\widehat{\sigma_{h} \vect{n}} & := \sigma_h \vect{n} - \tau_{s} (P_{\widehat{V}_s} \vect{v}_{s,h} - \widehat{\vect{v}}_{s,h}), 
		\\
		\label{eq:numerical-flux}
		\widehat{\vect{v}_{f,h} \cdot\vect{n}}& := \vect{v}_{f,h}\cdot\vect{n} + \tau_{f} (p_{h} - \widehat{p}_{h}), 
	\end{align}
	where $\tau_{s}, \tau_{f}$ are stabilization functions defined on the skeleton $\partial \Th$, satisfying the following assumptions. 
\end{subequations}

\vskip2mm
\noindent 
\textbf{Assumptions on stabilization functions $\tau_{s}$ and $\tau_{f}$.} We will assume these three conditions, which are necessary for well-posedness, in the rest of this paper: 
\begin{itemize}
	\item[(A1)] Suppose that $\tau_s, \tau_f \in \prod_{K \in \mathcal{T}_h} \mathcal{R}_0(\partial K)$, where $\mathcal{R}_0(\partial K)$ is the space of face-wise constant functions on $\partial K$. Assume that $\tau_f|_{\partial K} > 0$ on at least one face $F \subset \partial K$ for all $K \in \mathcal{T}_h$
	\item[(A2)] There exists $c_1, c_2 >0$ independent of mesh sizes such that $c_1 \le \max \tau_f|_{\partial K} \le c_2$ for all $K \in \mathcal{T}_h$
	\item[(A3)] There exists $c_1, c_2 >0$ independent of mesh sizes such that $c_1 h_K^{-1} \le \tau_s|_{\partial K} \le c_2 h_K^{-1}$ for all $K \in \mathcal{T}_h$
\end{itemize}

We omit the proof of the well-posedness of the semidiscrete problem \eqref{eq:semidiscrete-eqs}. Instead, we provide in Section \ref{sec:fully-discrete-error-analysis} the well-posedness of a fully discrete scheme derived from these equations. Next, we state a consistency result for the scheme \eqref{eq:semidiscrete-eqs}.

\begin{lemma}[Consistency]
	\label{lemma:consistency}
	Let \( (\sigma, \vect{v}_s, \vect{v}_f, p) \in C^1(0,T; \Sigma_N \times H^1(\Omega; \mathbb{V}) \times W_N \times H^1(\Omega)) \) be a solution of \eqref{eq:1stordersystem} with the homogeneous boundary conditions given by \eqref{eq:boundary-conditions}. Then, 
	
	\begin{subequations}
		\label{eq:variational-eqs}
		\begin{align}
			\label{eq:variational-eq1}
			\Thprod{\Ael(\dot{\sigma} + \alpha \dot{p} \Id)}{r} +\Thprod{\divergence r}{\vect{v}_{s}} -\pThprod{ \vect{v}_{s}}{r \vect{n}} &= 0 ,
			\\ 
			\label{eq:variational-eq2}
			\Thprod{\rho_{11}\dot{\vect{v}}_{s} + \rho_{12}\dot{\vect{v}}_{f}}{\vect{w}_{s}} + \Thprod{\sigma}{\grad \vect{w}_{s}} &
            \\
            \notag
            - \pThprod{\sigma \vect{n} - \tau_s ({P_{\widehat{V}_s}} \vect{v}_s - \widehat{\vect{v}}_s) }{\vect{w}_{s}} & = \Thprod{\vect{f}}{\vect{w}_{s}}, 
			\\
			\label{eq:variational-eq3}
			\Thprod{\rho_{12}\dot{\vect{v}}_{s} + \rho_{22}\dot{\vect{v}}_{f}}{\vect{w}_{f}} + \Thprod{\frac{\eta}{\kappa} \vect{v}_{f}}{\vect{w}_{f}} - \Thprod{p }{\divergence\vect{w}_{f}} +\pThprod{ p}{\vect{w}_{f}\cdot\vect{n}} & = 0, 
			\\
			\label{eq:variational-eq4}
			\Thprod{s_{0}\dot{p}}{q} + \Thprod{\Ael(\dot{\sigma}+\alpha \dot{p}\Id)}{\alpha q\Id} - \Thprod{\vect{v}_{f}}{\grad q} &
            \\
            \notag
            + \pThprod{\vect{v}_{f}\cdot \vect{n} + \tau_f (p - p)}{q} & = \Thprod{g}{q},
			\\
			\label{eq:variational-eq5}
			\pThprod{\sigma \vect{n} - \tau_s ({P_{\widehat{V}_s}} \vect{v}_s - {\vect{v}}_s) }{\widehat{\vect{w}}_s} & = 0,
			\\
			\label{eq:variational-eq6}
			\pThprod{\vect{v}_{f}\cdot \vect{n} + \tau_f (p - \widehat{p})}{\widehat{q}} & = 0
		\end{align}
	\end{subequations}
	for any $(r, \vect{w}_s, \vect{w}_f, q, \widehat{\vect{w}}_s, \widehat{q}) \in \Sigma_{h} \times V_{s,h} \times V_{f,h} \times Q_{h} \times \widehat{V}_{s,h}\times \widehat{Q}_{h}$ for all $0<t<T$ where $\widehat{\vect{v}}_s = \vect{v}_s|_{\Gamma_h}$, $\widehat{p} = p|_{\Gamma_h}$. 
\end{lemma}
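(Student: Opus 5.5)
The plan is to test each equation of \eqref{eq:1stordersystem} against the discrete test functions elementwise, integrate by parts on each $K \in \Th$, and then observe that every stabilization term vanishes because the exact solution is single valued and has the stated regularity. Throughout we use $\widehat{\vect{v}}_s = \vect{v}_s|_{\Gamma_h}$ and $\widehat{p} = p|_{\Gamma_h}$. These traces are well defined since $\vect{v}_s \in H^1(\Omega;\Rn)$ and $p \in H^1(\Omega)$, and they satisfy the homogeneous conditions on $\Gamma_d$ and $\Gamma_p$.

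\textbf{Equations \eqref{eq:variational-eq1}, \eqref{eq:variational-eq3}, \eqref{eq:variational-eq4}.}
\begin{itemize}
\item For \eqref{eq:variational-eq1}, take the first equation of \eqref{eq:1stordersystem} and test it with $r \in \Sigma_h$ on each $K$. Since $r$ is symmetric, $\Kprod{\varepsilon(\vect{v}_s)}{r} = \Kprod{\grad \vect{v}_s}{r}$. Integrating by parts gives $-\Kprod{\divergence r}{\vect{v}_s} + \pKprod{\vect{v}_s}{r\vect{n}}$, and summing over $K$ yields \eqref{eq:variational-eq1}.
\item For \eqref{eq:variational-eq3}, test the third equation with $\vect{w}_f \in V_{f,h}$ and integrate $\Kprod{\grad p}{\vect{w}_f}$ by parts elementwise.
\item For \eqref{eq:variational-eq4}, test the fourth equation with $q \in Q_h$ and integrate $\Kprod{\divergence \vect{v}_f}{q}$ by parts. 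The stabilization term $\tau_f(p-p)$ is identically zero, so it can be added at no cost.
\end{itemize}

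\textbf{Equations \eqref{eq:variational-eq2} and \eqref{eq:variational-eq5}.}
\begin{itemize}
\item For \eqref{eq:variational-eq2}, test the second equation with $\vect{w}_s$. Integration by parts gives $-\Kprod{\divergence\sigma}{\vect{w}_s} = \Kprod{\sigma}{\grad \vect{w}_s} - \pKprod{\sigma\vect{n}}{\vect{w}_s}$.
\item It remains to check that $P_{\widehat{V}_s}\vect{v}_s - \widehat{\vect{v}}_s$ contributes nothing. If one reads the statement literally with $\widehat{\vect{v}}_s = \vect{v}_s|_{\Gamma_h}$, the difference $P_{\widehat{V}_s}\vect{v}_s - \vect{v}_s$ need not vanish, so the identity cannot hold as stated.
\item I would therefore interpret the hat variable in the stabilization as $P_{\widehat{V}_s}\vect{v}_s$, the natural projected trace. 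Then $P_{\widehat{V}_s}\vect{v}_s - \widehat{\vect{v}}_s = 0$ on every face and the $\tau_s$ term drops.
\item Equation \eqref{eq:variational-eq5} is written with $P_{\widehat{V}_s}\vect{v}_s - \vect{v}_s$. Since $\widehat{\vect{w}}_s$ is facewise in $P_k$, the defining property of the $L^2$ projection gives $\pThprod{\tau_s(P_{\widehat{V}_s}\vect{v}_s - \vect{v}_s)}{\widehat{\vect{w}}_s} = 0$. This uses that $\tau_s$ is facewise constant by (A1). Hence only $\pThprod{\sigma\vect{n}}{\widehat{\vect{w}}_s}$ remains in \eqref{eq:variational-eq5}.
\end{itemize}

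\textbf{Equations \eqref{eq:variational-eq5}, \eqref{eq:variational-eq6}: single-valuedness.} Both reduce to single-valuedness of normal traces.
\begin{itemize}
\item Each row of $\sigma$ lies in $H(\divergence)$ with $\divergence\sigma \in L^2$ (from the momentum equation). So $\sigma\vect{n}$ from the two elements sharing an interior facet cancel, using opposite normals and the fact that $\widehat{\vect{w}}_s$ is single valued.
\item On $\Gamma_t$ we have $\sigma\vect{n} = 0$, and on $\Gamma_d$ we have $\widehat{\vect{w}}_s = 0$.
\item Likewise $\vect{v}_f \in H(\divergence)$ gives cancellation of $\vect{v}_f\cdot\vect{n}$ on interior facets, with $\vect{v}_f\cdot\vect{n} = 0$ on $\Gamma_f$ and $\widehat{q} = 0$ on $\Gamma_p$.
\item The term $\tau_f(p-\widehat{p})$ vanishes because $\widehat{p} = p|_{\Gamma_h}$.
\end{itemize}

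\textbf{Main obstacle.} The main technical point is justifying these facet-wise cancellations at the stated regularity. Normal traces of $H(\divergence)$ fields lie a priori only in $H^{-1/2}(\partial K)$, so I would either assume slightly more regularity (e.g.\ $\sigma, \vect{v}_f$ in $H^1$ piecewise) so that the traces are $L^2$, or interpret the boundary pairings as duality pairings against the polynomial test traces. The other delicate point is the bookkeeping of the $\tau_s$ term via the projection.
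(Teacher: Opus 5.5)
Your argument matches the paper's for \eqref{eq:variational-eq1}, \eqref{eq:variational-eq3} and \eqref{eq:variational-eq4}, which use elementwise integration by parts. It also matches for \eqref{eq:variational-eq5} and \eqref{eq:variational-eq6}, which use $L^2$-orthogonality of $P_{\widehat{V}_s}$ with facewise constant $\tau_s$, continuity of $\sigma\vect{n}$ and $\vect{v}_f\cdot\vect{n}$, and the boundary conditions. Your objection to \eqref{eq:variational-eq2} is correct, and the fault lies with the statement, not with your proof.

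The paper justifies \eqref{eq:variational-eq2} by $\pThprod{P_{\widehat{V}_s}\vect{v}_s-\widehat{\vect{v}}_s}{\widehat{\vect{w}}_s}=0$. But in \eqref{eq:variational-eq2} the test function is $\vect{w}_s\in V_{s,h}$, whose face traces have degree $k+1$, not $k$. With $\widehat{\vect{v}}_s=\vect{v}_s|_{\Gamma_h}$ the leftover term is $\pThprod{\tau_s(P_{\widehat{V}_s}\vect{v}_s-\vect{v}_s)}{\vect{w}_s-P_{\widehat{V}_s}\vect{w}_s}$. This is nonzero for generic smooth $\vect{v}_s$ and a suitable $\vect{w}_s$ supported on a single element. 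Give this as your reason. Saying only that $P_{\widehat{V}_s}\vect{v}_s-\vect{v}_s$ need not vanish does not by itself make the pairing nonzero.

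Your repair is equivalent to writing the stabilization as $\tau_s P_{\widehat{V}_s}(\vect{v}_s-\widehat{\vect{v}}_s)$. This agrees with \eqref{eq:numerical-traction} on discrete arguments, since $\widehat{\vect{v}}_{s,h}\in\widehat{V}_{s,h}$. It is also enough for what follows. Subtracting the scheme from the stabilization-free identity leaves $-\Thprod{\divergence e_{\sigma}^I}{\vect{w}_s}+\pThprod{\tau_s P_{\widehat{V}_s}e_{\vect{v}_s}^I}{\vect{w}_s}$. The projection of Theorem~\ref{thm:elasticity-interpolation} turns this into $\pThprod{R(\sigma,\vect{v}_s)}{\vect{w}_s}$, so the paper's reduced error equations are unaffected.

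On regularity, only your first option works. Duality pairings are fine against $\vect{w}_s|_{\partial K}$ and $q|_{\partial K}$, which lie in $H^{1/2}(\partial K)$. They fail in \eqref{eq:variational-eq5} and \eqref{eq:variational-eq6}. There $\widehat{\vect{w}}_s$ and $\widehat{q}$ are facewise polynomials that jump across face boundaries, so they are not in $H^{1/2}(\partial K)$. The facewise cancellation also needs normal traces defined on individual faces. So assume $\sigma$ and $\vect{v}_f$ are piecewise $H^r$ with $r>1/2$, as in the projection theorems; the $H(\divergence)$ setting of the statement does not provide this.
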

\begin{proof}
	The equations \eqref{eq:variational-eq1}, \eqref{eq:variational-eq3}, \eqref{eq:variational-eq4} follow by the integration by parts and \eqref{eq:1stordersystem}. \eqref{eq:variational-eq2} and \eqref{eq:variational-eq5} hold because $\pThprod{{P_{\widehat{V}_s}} \vect{v}_s - \widehat{\vect{v}}_s}{\widehat{w}_s} =0$ and $\sigma \vect{n}$ is continuous on $\mathcal{F}_h$. Finally, \eqref{eq:variational-eq6} holds because $\vect{v}_f \cdot \vect{n}$ is continuous on $\mathcal{F}_h$. 
\end{proof}

\begin{remark}
    \label{rmk:comparison}
	Here we briefly summarize the main differences between the HDG method in \cite{Meddahi-poroelastodynamic:2025} and our method. First, the primary variables in \cite{Meddahi-poroelastodynamic:2025} are the elastic stress tensor $\Cel \varepsilon(\vect{u}_s)$, $\vect{v}_s$, $\vect{v}_f$, $p$. When $\Gamma_t$ and $\Gamma_p$ intersect, imposing the traction boundary condition \eqref{eq:boundary-condition1} to the primary variable $\Cel \varepsilon(\vect{u}_s)$ needs an extra concern because $p$ values are given on $\Gamma_p \cap \Gamma_t$. In contrast, we use the total stress tensor $\Cel \varepsilon(\vect{u}_s) - \alpha p \mathbb{I}$, so imposing the traction boundary condition is straightforward. Second, two vector variables $\widehat{\vect{v}}_s, \widehat{\vect{v}}_f \in P_{l+1}(F; \mathbb{V})$, $l\ge 0$, are used for hybridization in \cite{Meddahi-poroelastodynamic:2025} with the polynomial spaces $P_l(K; \mathbb{S}) \times P_{l+1}(K; \mathbb{V}) \times P_{l+1}(K; \mathbb{V}) \times P_{l}(K)$ for internal variables $\Cel \varepsilon(\vect{u}_s)$, $\vect{v}_s$, $\vect{v}_f$, $p$, so the errors of the internal variables converge at a rate of $O(h^{l+1})$. In our method \eqref{eq:HDGspaces}, we use one vector and one scalar variables $\widehat{\vect{v}}_s$, $\widehat{p}$ as hybridization variables. For a given polynomial degree used in hybridization (i.e., $k=l+1$), the proposed method in this paper results in a reduced number of globally coupled degrees of freedom. In addition, the internal variables are approximated with polynomial degrees that are equal to or higher than those of the hybridization variables. This choice leads to $O(h^{k+1})$ or higher order convergence rates for the internal variables.
\end{remark}

\subsection{Initial data compatibility}\label{ss:initialdata}
The system \eqref{eq:semidiscrete-eqs} is a differential algebraic equation (DAE), not a system of ordinary differential equations because \eqref{eq:semidiscrete-eq5}, \eqref{eq:semidiscrete-eq6} are algebraic equations. In the theories of DAE (cf. \cite{Kunkel-Mehrmann:2006}), a compatibility of initial data is essential for well-posedness of the problem. 

For initial data $(\sigma(0), \vect{v}_{s}(0), \vect{v}_{f}(0), p(0))$ we will assume that numerical initial data $(\sigma_{h}(0)$, $\vect{v}_{s,h}(0)$, $\vect{v}_{f,h}(0)$, $p_{h}(0)$, $\widehat{\vect{v}}_{s,h}(0)$, $\widehat{p}_{h}(0))$ satisfy the compatibility conditions
\begin{subequations}
	\begin{align}
		\label{eq:initial-data-constraint1}
		\sigma_h(0) \vect{n} - \tau_{s} ({P_{\widehat{V}_s}} \vect{v}_{s,h}(0) - \widehat{\vect{v}}_{s,h}(0)) &= 0 \quad \text{ on } \mathcal{F}_h, 
		\\
		\label{eq:initial-data-constraint2}
		\vect{v}_{f,h}(0)\cdot\vect{n} + \tau_{f} (p_{h}(0) - \widehat{p}_{h}(0)) &= 0  \quad \text{ on } \mathcal{F}_h, 
	\end{align}
	and approximation properties 
	\begin{align}
		\label{eq:initial-data-approximation1}
		\|\sigma(0) - \sigma_{h}(0) \|_{L^2} + \|\vect{v}_{s} (0) - \vect{v}_{s,h}(0) \|_{L^2} &
        \\
        \notag 
        + h^{1/2}\|\vect{v}_{s} (0) - \widehat{\vect{v}}_{s,h}(0) \|_{{L^2(\mathcal{F}_h}) }  &\le Ch^m |\sigma(0), \vect{v}_{s}(0)|_{m}, 
		\\
		\label{eq:initial-data-approximation2}
		\|\vect{v}_{f}(0) - \vect{v}_{f,h}(0) \|_{L^2} + \|p(0) - p_h(0)\|_{L^2} &
        \\
        \notag
        + h^{1/2} \|p(0) - \widehat{p}_h(0)\|_{L^2(\mathcal{F}_h)} & \le Ch^m |\vect{v}_{f}(0), p(0)|_{m} 
	\end{align}
	for $1 \le m \le k+1$.
\end{subequations}

We can obtain numerical initial data satisfying \eqref{eq:initial-data-constraint1}, \eqref{eq:initial-data-constraint2}, \eqref{eq:initial-data-approximation1}, \eqref{eq:initial-data-approximation2} by solving two suitable elliptic problems as follows: 

Let $\vect{v}_f(0) \in L^1(\Omega; \mathbb{V})$, $\divergence \vect{v}_f(0), p(0) \in L^1(\Omega)$, $p(0)|_{\mathcal{F}_h} \in L^1(\Gamma_h)$ be given. To find numerical initial data satisfying \eqref{eq:initial-data-constraint2}, find $(\vect{v}_{f,h}(0), p_h(0), \widehat{p}_h(0)) \in V_{f,h} \times Q_{h} \times \widehat{Q}_h$ such that 
\begin{align*}
	\Thprod{\vect{v}_{f,h}(0)}{\vect{w}_{f}} - \Thprod{p_{h}(0)}{\divergence \vect{w}_{f}} + \pThprod{\widehat{p}_h(0)}{\vect{w}_{f}\cdot \vect{n}} 
	& = L_{1,1}(\vect{w}_f) && \quad \forall \vect{w}_f\in V_{f,h},
	\\
	-\Thprod{\vect{v}_{f,h}(0)}{\grad q} + \pThprod{\widehat{\vect{v}}_{f,h}(0)\cdot \vect{n}}{q} 
	&= L_{1,2}(q) && \quad \forall q \in Q_{h},
	\\
	\pThprod{ \widehat{\vect{v}}_{f,h}(0)\cdot \vect{n} }{\widehat{q}} 
	&= 0 && \quad \forall q \in \widehat{Q}_{h},
\end{align*}
where the HDG numerical flux is $\widehat{\vect{v}}_{f,h}(0)\cdot \vect{n}=\vect{v}_{f,h}(0)\cdot\vect{n} + \tau_{f} (p_{h}(0) - \widehat{p}_{h}(0))$ on $\partial \Th$, and the linear functionals are given by
\[
L_{1,1}(\vect{w}_f) = \Thprod{\vect{v}_{f}(0)}{\vect{w}_{f}} - \Thprod{p(0)}{\divergence \vect{w}_{f}} + \pThprod{p(0)}{\vect{w}_{f}\cdot \vect{n}} ,\quad
L_{1,2} (q) = \Thprod{\divergence\vect{v}_{f}(0)}{q}.
\]
Similarly, for given $\sigma(0) \in L^1(\Omega; \mathbb{S})$, $\divergence \sigma(0), v_{s}(0) \in L^1(\Omega; \mathbb{V})$, $v_{s}(0)|_{\mathcal{F}_h} \in L^1(\Gamma_h; \mathbb{V})$, find $(\sigma_h(0), \vect{v}_{s,h}(0), \widehat{\vect{v}}_{s,h}(0)) \in \Sigma_h \times V_{s,h} \times \widehat{V}_{s,h}$ such that 
\begin{align*}
	\Thprod{\Ael \sigma_{h}(0) }{r} +\Thprod{\divergence r}{\vect{v}_{s,h}} -\pThprod{\widehat{\vect{v}}_{s,h}(0)}{r \vect{n}} &= L_{2,1}(r)&& \quad \forall r\in \Sigma_h, 
    \\
	-\Thprod{\sigma_{h}}{\grad \vect{w}_{s}} - \pThprod{ \widehat{\sigma}_h(0)\vect{n}}{\vect{w}_{s}} &= 
	L_{2,2}(\vect{w}_s) &&\quad \forall \vect{w}_s\in V_{s,h},
    \\
	\pThprod{\widehat{\sigma}_h(0)\vect{n}}{\widehat{\vect{w}}_s} &= 0 && \quad\forall \widehat{\vect{w}}_{s,h} \in \widehat{V}_{s,h},
\end{align*}
where the HDG numerical flux is $\widehat{\sigma}_h(0)\vect{n} = \sigma_h(0) \vect{n} - \tau_{s} ({P_{\widehat{V}_s}} \vect{v}_{s,h}(0) - \widehat{\vect{v}}_{s,h}(0))$ and the linear functionals are
\[
L_{2,1}(r) = \Thprod{\Ael \sigma(0) }{r} +\Thprod{\divergence r}{\vect{v}_{s}(0)} -\pThprod{\vect{v}_{s}(0)}{r \vect{n}} ,\quad 
L_{2,2}(\vect{w}_s) = \Thprod{\divergence \sigma (0)}{\vect{w}_{s}}.
\]

These triplets of initial data obviously satisfy \eqref{eq:initial-data-constraint1}, \eqref{eq:initial-data-constraint2}. One can check that \eqref{eq:initial-data-approximation1}, \eqref{eq:initial-data-approximation2} are satisfied by the standard error analyses of second order elliptic problems and linear elasticity problems as long as the initial data satisfy the necessary regularities in \eqref{eq:initial-data-constraint1}, \eqref{eq:initial-data-constraint2} (cf. \cite{Cockburn:2010,Qiu-Shen-Shi:2018,Du-Sayas:2020a}).

\subsection{Energy equality for the semidiscrete HDG formulation}\label{ss:stability}
%
We conclude this section by presenting an energy equality of the semidiscrete scheme and for general right-hand sides, which will be utilized in the subsequent error analysis.
\begin{lemma}[General energy identity]\label{lemma:energy}
	Let  $(\sigma_{h}, \vect{v}_{s,h}, \vect{v}_{f,h}, p_{h}) \in C^1(0,T; \Sigma_{h} \times V_{s,h} \times V_{f,h} \times Q_{h} )$ and  $(\widehat{\vect{v}}_{s,h}, \widehat{p}_{h}) \in C^0(0,T;  \widehat{V}_{s,h}\times \widehat{Q}_{h})$ be a solution of  
	\begin{subequations}
		\label{eq:general-semidiscrete-eqs}
		\begin{align}
			\label{eq:general-semidiscrete-eq1}
			\Thprod{\Ael(\dot{\sigma}_{h} + \alpha \dot{p}_{h} \Id)}{r} +\Thprod{\divergence r}{\vect{v}_{s,h}} -\pThprod{\widehat{\vect{v}}_{s,h}}{r \vect{n}} &= F_1(r) ,
			\\ 
			\label{eq:general-semidiscrete-eq2}
			\Thprod{\rho_{11}\dot{\vect{v}}_{s,h} + \rho_{12}\dot{\vect{v}}_{f,h}}{\vect{w}_{s}} + \Thprod{\sigma_{h}}{\grad \vect{w}_{s}}&
            \\
            \notag 
            - \pThprod{\widehat{\sigma_{h} \vect{n}}}{\vect{w}_{s}} & = F_2(\vect{w}_{s}), 
			\\
			\label{eq:general-semidiscrete-eq3}
			\Thprod{\rho_{12}\dot{\vect{v}}_{s,h} + \rho_{22}\dot{\vect{v}}_{f,h}}{\vect{w}_{f}} + \Thprod{\frac{\eta}{\kappa} \vect{v}_{f,h}}{\vect{w}_{f}} &
            \\
            \notag 
            - \Thprod{p_{h} }{\divergence\vect{w}_{f}} +\pThprod{\widehat{p}_{h}}{\vect{w}_{f}\cdot\vect{n}} & = F_3(\vect{w}_f), 
			\\
			\label{eq:general-semidiscrete-eq4}
			\Thprod{s_{0}\dot{p}_{h}}{q} + \Thprod{\Ael(\dot{\sigma}_{h}+\alpha \dot{p}_{h}\Id)}{\alpha q\Id} - \Thprod{\vect{v}_{f,h}}{\grad q} &
            \\
            \notag 
            + \pThprod{\widehat{\vect{v}_{f,h}\cdot \vect{n}}}{q} & = F_4(q),
			\\
			\label{eq:general-semidiscrete-eq5}
			\pThprod{\sigma_h \vect{n} - \tau_{s} ({P_{\widehat{V}_s}} \vect{v}_{s,h} - \widehat{\vect{v}}_{s,h}) }{\widehat{\vect{w}}_s} & = F_5(\widehat{\vect{w}}_s),
			\\
			\label{eq:general-semidiscrete-eq6}
			{-}    \pThprod{\vect{v}_{f,h}\cdot\vect{n} + \tau_{f} (p_{h} - \widehat{p}_{h})}{\widehat{q}} & = F_6(\widehat{q})
		\end{align}
	\end{subequations}
	for any $(r, \vect{w}_s, \vect{w}_f, q, \widehat{\vect{w}}_s, \widehat{q}) \in \Sigma_{h} \times V_{s,h} \times V_{f,h} \times Q_{h} \times \widehat{V}_{s,h}\times \widehat{Q}_{h}$ and  with general bounded linear functionals $F_i(\cdot)(t), 1 \le i \le 6$, $0\le t \le T$ on $\Sigma_h$, $V_{s,h}$, $V_{f,h}$, $Q_h$, $\widehat{V}_{s,h}$, $\widehat{Q}_h$. Then, 
	%
	\begin{align}
		\label{eq:semidiscrete-energy-identity}
		X(t)^2 - X(0)^2 + 2Y(t)^2 = 2 \int_0^t &\left( (F_1(\sigma_h(s)) + F_2(\vect{v}_{s,h}(s)) + F_3(\vect{v}_{f,h} (s)) \right.  
		\\
		\notag
		& \;+ \left. F_4(p_h(s)) + F_5(\widehat{\vect{v}}_{s,h}(s)) + F_6(\widehat{p}_h(s))\right) \, ds
	\end{align}
	for $X(t), Y(t) \ge 0$ defined by  
	\begin{align*}
		X(t)^2 := &\Thprod{\Ael({\sigma}_{h}(t) + \alpha {p}_{h}(t) \Id)}{{\sigma}_{h}(t) + \alpha {p}_{h}(t) \Id} +\Thprod{s_{0}{p}_{h}(t)}{p_h(t)}
		\\
		& + \Thprod{\rho_{11}{\vect{v}}_{s,h}(t)}{\vect{v}_{s,h}(t)} + 2\Thprod{\rho_{12}{\vect{v}}_{f,h}(t)}{\vect{v}_{s,h}(t)}  + \Thprod{\rho_{22}{\vect{v}}_{f,h}(t)}{\vect{v}_{f,h}(t)} ,
		\\
		Y(t)^2 :=&\int_0^t \Big( \Thprod{\frac{\eta}{\kappa} \vect{v}_{f,h}(s)}{\vect{v}_{f,h}(s)}  + \pThprod{\tau_{s} ({P_{\widehat{V}_s}} \vect{v}_{s,h}(s) - \widehat{\vect{v}}_{s,h}(s))}{{P_{\widehat{V}_s}}\vect{v}_{s,h}(s)-\widehat{\vect{v}}_{s,h}(s) } 
		\\
		&\qquad +  \pThprod{\tau_{f} (p_{h}(s) - \widehat{p}_{h}(s))}{p_h(s)-\widehat{p}_h(s)} \Big) \,ds. 
	\end{align*}
\end{lemma}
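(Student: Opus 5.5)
The identity follows from the standard energy argument: take the discrete solution itself as the test functions, add the six equations, and integrate in time. Concretely, at each fixed $t$ I will choose
\[
r=\sigma_h(t),\quad \vect{w}_s=\vect{v}_{s,h}(t),\quad \vect{w}_f=\vect{v}_{f,h}(t),\quad q=p_h(t),\quad \widehat{\vect{w}}_s=\widehat{\vect{v}}_{s,h}(t),\quad \widehat{q}=\widehat{p}_h(t)
\]
in \eqref{eq:general-semidiscrete-eq1}--\eqref{eq:general-semidiscrete-eq6}. After summing, the right-hand side is exactly the integrand in \eqref{eq:semidiscrete-energy-identity}. It then remains to show that the left-hand side equals $\tfrac12\frac{d}{dt}X(t)^2+\frac{d}{dt}Y(t)^2$.

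\textbf{Time-derivative terms.} I will add \eqref{eq:general-semidiscrete-eq1} and \eqref{eq:general-semidiscrete-eq4}. Using the symmetry of $\Ael$, this gives
\[
\Thprod{\Ael(\dot\sigma_h+\alpha\dot p_h\Id)}{\sigma_h+\alpha p_h\Id}+\Thprod{s_0\dot p_h}{p_h}
=\tfrac12\tfrac{d}{dt}\Big[\Thprod{\Ael(\sigma_h+\alpha p_h\Id)}{\sigma_h+\alpha p_h\Id}+\Thprod{s_0p_h}{p_h}\Big].
\]
From \eqref{eq:general-semidiscrete-eq2} and \eqref{eq:general-semidiscrete-eq3}, the inertial terms combine because
\[
\Thprod{\rho_{12}\dot{\vect{v}}_{f,h}}{\vect{v}_{s,h}}+\Thprod{\rho_{12}\dot{\vect{v}}_{s,h}}{\vect{v}_{f,h}}=\tfrac{d}{dt}\Thprod{\rho_{12}\vect{v}_{f,h}}{\vect{v}_{s,h}}.
\]
Together with the $\rho_{11}$ and $\rho_{22}$ terms, they produce the kinetic part of $\tfrac12\frac{d}{dt}X^2$. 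The term $\Thprod{\frac{\eta}{\kappa}\vect{v}_{f,h}}{\vect{v}_{f,h}}$ is the first part of $\frac{d}{dt}Y^2$.

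\textbf{Solid coupling terms.} Elementwise integration by parts gives $\Thprod{\divergence\sigma_h}{\vect{v}_{s,h}}+\Thprod{\sigma_h}{\grad\vect{v}_{s,h}}=\pThprod{\sigma_h\vect{n}}{\vect{v}_{s,h}}$. This cancels the $\sigma_h\vect n$ part of the numerical traction in \eqref{eq:general-semidiscrete-eq2}. What remains from \eqref{eq:general-semidiscrete-eq1}, \eqref{eq:general-semidiscrete-eq2} and \eqref{eq:general-semidiscrete-eq5} is
\[
-\pThprod{\widehat{\vect{v}}_{s,h}}{\sigma_h\vect n}+\pThprod{\tau_s(P_{\widehat V_s}\vect v_{s,h}-\widehat{\vect v}_{s,h})}{\vect v_{s,h}}+\pThprod{\sigma_h\vect n}{\widehat{\vect v}_{s,h}}-\pThprod{\tau_s(P_{\widehat V_s}\vect v_{s,h}-\widehat{\vect v}_{s,h})}{\widehat{\vect v}_{s,h}}.
\]
The one delicate point is the second term. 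By (A1), $\tau_s$ is facewise constant, so $\tau_s(P_{\widehat V_s}\vect v_{s,h}-\widehat{\vect v}_{s,h})$ is a facewise $P_k$ function. Hence $\vect v_{s,h}$ may be replaced by $P_{\widehat V_s}\vect v_{s,h}$ in that pairing. The four terms then collapse to $\pThprod{\tau_s(P_{\widehat V_s}\vect v_{s,h}-\widehat{\vect v}_{s,h})}{P_{\widehat V_s}\vect v_{s,h}-\widehat{\vect v}_{s,h}}$.

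\textbf{Fluid coupling terms.} Similarly, $-\Thprod{p_h}{\divergence\vect v_{f,h}}-\Thprod{\vect v_{f,h}}{\grad p_h}=-\pThprod{p_h}{\vect v_{f,h}\cdot\vect n}$. Combined with the boundary terms of \eqref{eq:general-semidiscrete-eq3} and \eqref{eq:general-semidiscrete-eq4}, this leaves $\pThprod{\widehat p_h}{\vect v_{f,h}\cdot\vect n}+\pThprod{\tau_f(p_h-\widehat p_h)}{p_h}$. Adding \eqref{eq:general-semidiscrete-eq6}, whose minus sign is what makes this work, yields $\pThprod{\tau_f(p_h-\widehat p_h)}{p_h-\widehat p_h}$.

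\textbf{Conclusion.} These pointwise-in-time identities are valid because $(\sigma_h,\vect v_{s,h},\vect v_{f,h},p_h)$ is $C^1$ in time, while $(\widehat{\vect v}_{s,h},\widehat p_h)$ only needs to be $C^0$, since no time derivatives fall on the hybrid variables. Integrating from $0$ to $t$ and multiplying by $2$ gives \eqref{eq:semidiscrete-energy-identity}.

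Finally, $X(t)^2\ge0$ follows from the positivity \eqref{eq:A-positive} of $\Ael$, from $s_0\ge0$, and from \eqref{eq:rho-coercivity}, which makes the $2\times2$ density matrix positive definite. Likewise $Y(t)^2\ge0$ because $\eta/\kappa>0$ and $\tau_s,\tau_f\ge0$. So $X(t)$ and $Y(t)$ are well defined as nonnegative square roots.

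The main obstacle is just the bookkeeping of signs in the trace terms, in particular the projection step that relies on $\tau_s$ being facewise constant.
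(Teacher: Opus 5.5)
Your proof is correct and matches the paper's argument: test with the discrete solution itself and integrate by parts elementwise. Grouping the first, second and fifth equations (resp. the third, fourth and sixth) collapses the trace terms to the $\tau_s$- and $\tau_f$-stabilization terms, using that $\tau_s$ is facewise constant together with the $L^2$ projection $P_{\widehat{V}_s}$; the remaining time-derivative terms are $\tfrac12\frac{d}{dt}X(t)^2$, and integrating in time gives the identity. Your separate grouping of the inertial and compliance terms is just more explicit bookkeeping of the same computation.
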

\begin{proof}
	We test the system \eqref{eq:general-semidiscrete-eqs} with $r = \sigma_h, \; \vect{w}_s = \vect{v}_{s,h}, \; \vect{w}_f = \vect{v}_{f,h}, \; q = p_h, \; \widehat{\vect{w}}=\widehat{\vect{v}}_{s,h}, \; \widehat{q} = \widehat{p}_h$, in \eqref{eq:general-semidiscrete-eqs}. Then, we add \eqref{eq:general-semidiscrete-eq1}, \eqref{eq:general-semidiscrete-eq2}, and \eqref{eq:general-semidiscrete-eq5}, integrate by parts to obtain
	\begin{align}
		\nonumber
		\Thprod{\Ael(\dot{\sigma}_{h} + \alpha \dot{p}_{h} \Id)}{\sigma_h}+ \Thprod{\rho_{11}\dot{\vect{v}}_{s,h} + \rho_{12}\dot{\vect{v}}_{f,h}}{\vect{v}_{s}}  + &\pThprod{\vect{v}_{s,h} - \widehat{\vect{v}}_{s,h}}{ \sigma_h \vect{n} - \widehat{\sigma_h \vect{n} }} 
		\\
	\label{eq:energy-equality-proof1}
		&= F_1(\sigma_h) + F_2(\vect{v}_{s,h}) + F_5(\widehat{\vect{v}}_{s,h}).
	\end{align}
	Using the definition of the numerical flux $\widehat{\sigma_h \vect{n} }$ and the definition of ${P_{\widehat{V}_s}}$ we have that 
	\begin{align*}
		\pThprod{\vect{v}_{s,h} - \widehat{\vect{v}}_{s,h}}{ \sigma_h \vect{n} - \widehat{\sigma_h \vect{n} }} = 
		\pThprod{ P_{\widehat{V}_s}\vect{v}_{s,h} - \widehat{\vect{v}}_{s,h} }{  \tau_{s}\left( {P_{\widehat{V}_s}}\vect{v}_{s,h} - \widehat{\vect{v}}_{s,h}  \right)} .
	\end{align*}
	Similarly, we add \eqref{eq:general-semidiscrete-eq3}, \eqref{eq:general-semidiscrete-eq4}, and \eqref{eq:general-semidiscrete-eq6}, integrate by parts to obtain
	\begin{align}
		\nonumber
		\Thprod{\rho_{12}\dot{\vect{v}}_{s,h} + \rho_{22}\dot{\vect{v}}_{f,h}}{\vect{v}_{f}} &+ \Thprod{\frac{\eta}{\kappa} \vect{v}_{f,h}}{\vect{v}_{f,h}}+
		\Thprod{s_{0}\dot{p}_{h}}{q} + \Thprod{\Ael(\dot{\sigma}_{h}+\alpha \dot{p}_{h}\Id)}{\alpha q\Id} \\
		\label{eq:energy-equality-proof2}
		+ \pThprod{p_h-\widehat{p}_h}{\tau_f(p_h - \widehat{p}_h)} & = F_{3}(\vect{v}_{f,h}) + F_{4}(p_h) + F_{6}(\widehat{p}_h) .
	\end{align}
	According to the definition of $X(t)$, we observe that adding \eqref{eq:energy-equality-proof1} and \eqref{eq:energy-equality-proof2} gives
	\begin{align*}
		\frac{1}{2}\frac{d}{dt} \left( X(t)^{2} + 2Y(t)^{2}\right)   =  F_1(\sigma_h) + F_2(\vect{v}_{s,h}) +F_{3}(\vect{v}_{f,h}) + F_{4}(p_h) +  F_5(\widehat{\vect{v}}_{s,h})+F_{6}(\widehat{p}_h)
	\end{align*}
	from which we obtain the energy identity \eqref{eq:semidiscrete-energy-identity} after integrating over $t$.
\end{proof}

\begin{corollary}[Energy identity]
	Consider the solution of \eqref{eq:semidiscrete-eqs} and the definitions of $X(t)$ and $Y(t)$ as in the previous lemma. Then
	\[
	X(t)^{2} - X(0)^{2} + 2Y(t)^{2} = 2 \int_0^{t}\left( \Thprod{\vect{f}(s)}{\vect{v}_{s,h}(s)} + \Thprod{g(s)}{p_{h}(s)}\right) ds.
	\]
	
\end{corollary}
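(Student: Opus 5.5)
The corollary follows by applying Lemma~\ref{lemma:energy} to the semidiscrete system \eqref{eq:semidiscrete-eqs}. The plan is to identify the right-hand side functionals $F_1,\dots,F_6$ for which \eqref{eq:semidiscrete-eqs} is an instance of \eqref{eq:general-semidiscrete-eqs}, and then substitute them into \eqref{eq:semidiscrete-energy-identity}.

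First, compare the two systems line by line. Equations \eqref{eq:semidiscrete-eq1}--\eqref{eq:semidiscrete-eq4} coincide with \eqref{eq:general-semidiscrete-eq1}--\eqref{eq:general-semidiscrete-eq4} once the numerical traces \eqref{eq:numerical-traction} and \eqref{eq:numerical-flux} are inserted. This gives
\[
F_1 = 0, \qquad F_2(\vect{w}_s) = \Thprod{\vect{f}}{\vect{w}_s}, \qquad F_3 = 0, \qquad F_4(q) = \Thprod{g}{q}.
\]
Next, by the definition \eqref{eq:numerical-traction}, equation \eqref{eq:semidiscrete-eq5} is exactly \eqref{eq:general-semidiscrete-eq5} with $F_5 = 0$. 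Likewise, by \eqref{eq:numerical-flux}, equation \eqref{eq:semidiscrete-eq6} multiplied by $-1$ is \eqref{eq:general-semidiscrete-eq6} with $F_6 = 0$. The sign convention in \eqref{eq:general-semidiscrete-eq6} is harmless here because the right-hand side vanishes. All $F_i$ are bounded linear functionals for each $t$, provided $\vect{f}(t)$ and $g(t)$ are in $L^2$.

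Then invoke Lemma~\ref{lemma:energy} with these choices. The integrand in \eqref{eq:semidiscrete-energy-identity} becomes
\[
F_2(\vect{v}_{s,h}(s)) + F_4(p_h(s)) = \Thprod{\vect{f}(s)}{\vect{v}_{s,h}(s)} + \Thprod{g(s)}{p_h(s)},
\]
and the stated identity follows immediately.

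There is no real obstacle. The only point to check is that the algebraic equations \eqref{eq:semidiscrete-eq5} and \eqref{eq:semidiscrete-eq6} match the general form, including the sign in \eqref{eq:general-semidiscrete-eq6} and the fact that the stabilization term in the flux uses $P_{\widehat{V}_s}\vect{v}_{s,h}$. Both are verified directly from the definitions of the numerical traces.
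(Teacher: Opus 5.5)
Your proposal is correct and matches the paper: the corollary is stated without a separate proof, as the specialization of Lemma~\ref{lemma:energy} with $F_2(\vect{w}_s)=\Thprod{\vect{f}}{\vect{w}_s}$, $F_4(q)=\Thprod{g}{q}$, and $F_1=F_3=F_5=F_6=0$. You also handled the one point that needs care: \eqref{eq:semidiscrete-eq6} multiplied by $-1$ is \eqref{eq:general-semidiscrete-eq6} with $F_6=0$.
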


\section{Semi-discrete Error Analysis}\label{sec:semidiscrete-error-analysis}

{
In this section, we provide full details of the \textit{a priori} error estimates of the semidiscrete HDG approximation formulated in Section \ref{ss:HDGformulation}. We begin in Section \ref{ss:interpolation} by defining the appropriate local HDG projection operators and stating their fundamental approximation properties. Subsequently, in Section \ref{subsec:error-equations}, we decompose the total numerical error into interpolation and projection components to systematically derive the associated error equations. Finally, in Section \ref{subsec:energy-estimates}, we present the main result of this section, Theorem \ref{thm:semidiscrete-error-estimate}, which states the optimal semidiscrete error estimates.
}
\subsection{Projection operators}\label{ss:interpolation}
In the following theorems, we define the local HDG projection operators $(\Pi_{\sigma,K}, \Pi_{\vect{v}_{s},K})$ and $(\Pi_{\vect{v}_{f},K}, \Pi_{p,K})$ and state their result on existence and approximation properties, which will be used later. We also provide references to these results.
\begin{theorem}
	\label{thm:elasticity-interpolation}
	For $k\ge 1$, $r>\frac 12$, $K \in \mathcal{T}_h$, and  $\tau_{s} \in \mathcal{R}_k(\partial K)$ which is uniformly bounded from above and below by positive constants, there exists a triple of linear operators $(\Pi_{\sigma,K}, \Pi_{\vect{v}_s, K},R_K)$ such that
	\begin{alignat*}{3}
		\Pi_{\sigma,K} \times \Pi_{\vect{v}_s,K} &: H^r(K; \mathbb{S}) \times H^r(K; \mathbb{V}) \to {P}_k(K; \mathbb{S}) \times {P}_{k+1}(K;\mathbb{V}), 
		\\
		R_K &: H^r(K; \mathbb{S}) \times H^r(K; \mathbb{V}) \to \mathcal{R}_k(\partial K; \mathbb{V})
	\end{alignat*}
	satisfying
	%
		\begin{align*}
			(\vect{v}_{s} - \Pi_{\vect{v}_{s},K} \vect{v}_{s} , \vect{w})_{K} &= 0 & & \forall \vect{w} \in {P}_{k-1}(K; \mathbb{V}), 
			\\
			-(\divergence (\sigma - \Pi_{\sigma,K} \sigma), \vect{w}_{s})_{K} &
            \\
            \notag 
            + \langle \tau_{s} {P_{\widehat{V}_s}} (\vect{v}_{s} - \Pi_{\vect{v}_{s},K} \vect{v}_{s}), \vect{w}_{s}\rangle_{\partial K} &= \langle R_K(\sigma, \vect{v}_{s}), \vect{w}_{s} \rangle_{\partial K} & & \forall \vect{w}_{s} \in {P}_{k+1}(K; \mathbb{V}), 
			\\
			-\langle (\sigma - \Pi_{\sigma,K} \sigma ) \vect{n} - \tau_{s}(\vect{v}_{s} - \Pi_{\vect{v}_{s},K}\vect{v}_{s} ), \widehat{\vect{w}}_{s} \rangle_{\partial K} &= \langle R_K(\sigma, \vect{v}_{s}), \widehat{\vect{w}}_{s} \rangle_{\partial K}  & & \forall \widehat{\vect{w}}_{s} \in \mathcal{R}_{k}(\partial K; \mathbb{V})
		\end{align*}
	%
	and if $\sigma \in H^m(K; \mathbb{S})$, $\vect{v}_{s} \in H^{m+1}(K; \mathbb{V})$ for $r\le m \le k+1$, then 
	\begin{multline}	    
		\label{eq:elasticity-interpolation-estimate}
		\| \sigma - \Pi_{\sigma,K} \sigma\|_{K} + h_K^{-1}\| \vect{v}_{s} - \Pi_{\vect{v}_{s},K} \vect{v}_{s} \|_{K} + h_K^{1/2} \| R_K(\sigma, \vect{v}_s) \|_{\partial K}
        \\
        \le Ch_K^m (|\sigma|_{m, K} + |\vect{v}_{s}|_{m+1, K}) .
	\end{multline}
\end{theorem}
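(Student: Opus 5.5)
This is the HDG+ projection of Du and Sayas, and I would build it directly by counting dimensions, following the construction in the HDG+ elasticity literature. Since $R_K$ is only tested against $\mathcal{R}_k(\partial K;\mathbb{V})$ functions and traces of $P_{k+1}$ functions, I would treat $R_K$ as a byproduct and proceed in two stages.

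\emph{Stage 1.} Fix $\Pi_{\vect{v}_s,K}\vect{v}_s$ from a modified elliptic projection. Let $\Pi_{\vect{v}_s,K}\vect{v}_s\in P_{k+1}(K;\mathbb{V})$ be defined by
\[
(\vect{v}_s-\Pi_{\vect{v}_s,K}\vect{v}_s,\vect{w})_K=0 \quad \forall \vect{w}\in P_{k-1}(K;\mathbb{V}), \qquad (\varepsilon(\vect{v}_s-\Pi_{\vect{v}_s,K}\vect{v}_s),\varepsilon(\vect{w}))_K=0 \quad \forall \vect{w}\in P_{k+1}(K;\mathbb{V}).
\]
The rigid motions not already fixed by the first condition are fixed in the usual way. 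Because $k\ge1$, the first condition already controls the rigid motions, which give $\varepsilon=0$. Unisolvence follows from Korn's inequality on the reference element. The approximation order $h_K^{m+1}$ in $L^2$ follows by Bramble--Hilbert and scaling.

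\emph{Stage 2.} Choose $\Pi_{\sigma,K}\sigma\in P_k(K;\mathbb{S})$ so that the middle and last identities become consistent. The idea is to make the volume and boundary residuals coincide. Subtract the third identity, tested with $\widehat{\vect{w}}_s=P_{\widehat{V}_s}\vect{w}_s$, from the second identity. This shows that one needs
\[
-(\divergence\delta_\sigma,\vect{w}_s)_K+\langle\delta_\sigma\vect{n},P\vect{w}_s\rangle_{\partial K}+\langle\tau_s P\delta_v,\vect{w}_s\rangle_{\partial K}-\langle\tau_s\delta_v,P\vect{w}_s\rangle_{\partial K}=\langle R_K,\vect{w}_s-P\vect{w}_s\rangle_{\partial K}.
\]
Here $\delta_\sigma=\sigma-\Pi_{\sigma,K}\sigma$ and $\delta_v=\vect{v}_s-\Pi_{\vect{v}_s,K}\vect{v}_s$. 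The two $\tau_s$ terms cancel, since $\tau_s$ is facewise polynomial and $P$ is the $L^2$ projection; this uses $\tau_s\in\mathcal{R}_0$. Integrating by parts, the left side reduces to $(\delta_\sigma,\varepsilon(\vect{w}_s))_K-\langle\delta_\sigma\vect{n},\vect{w}_s-P\vect{w}_s\rangle_{\partial K}$.

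The natural choice is therefore to define $\Pi_{\sigma,K}\sigma$ as the $L^2$ projection onto $P_k(K;\mathbb{S})$, which kills the volume term. Then set
\[
R_K:=P_{\mathcal{R}_k}\bigl[-\delta_\sigma\vect{n}+\tau_s\delta_v\bigr]
\]
from the third identity. It remains to verify that the second identity holds with this $R_K$. By the computation above, this reduces to checking that $\langle\delta_\sigma\vect{n}-\tau_s\delta_v+R_K,\vect{w}_s-P\vect{w}_s\rangle_{\partial K}=0$. This is the main obstacle. The term $\delta_\sigma\vect{n}+R_K$ is not generally orthogonal to $(I-P)\vect{w}_s$, where $(I-P)\vect{w}_s$ consists of degree-$(k+1)$ face modes. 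Hence the choices of $\Pi_\sigma$ and $\Pi_v$ must be coupled.

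To resolve this, I would instead define $(\Pi_{\sigma,K}\sigma,\Pi_{\vect{v}_s,K}\vect{v}_s)$ jointly. The conditions are: the moment condition against $P_{k-1}$; the condition $(\delta_\sigma,\varepsilon\vect{w})_K=\langle \delta_\sigma\vect{n}-\tau_s P\delta_v\ldots\rangle$ tested on $\vect{w}\in P_{k+1}$ modulo $P_{k-1}$; and the $L^2$ orthogonality of $\delta_\sigma$ onto $\varepsilon(P_{k+1})^\perp\cap P_k(K;\mathbb{S})$. This mirrors Du--Sayas. I would then verify square unisolvence by showing that zero data force a zero solution, using an energy argument with $\tau_s>0$.

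For the bounds, I would use scaling to the reference element. The estimate $h_K^{1/2}\|R_K\|_{\partial K}$ follows from trace inequalities combined with $\tau_s\sim h_K^{-1}$, together with the polynomial-invariance of the projection.

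Since the cited works already establish exactly this result (Qiu--Shen--Shi, Du--Sayas 2020), in practice I would invoke them after checking that their hypotheses match, namely $\tau_s$ facewise constant and scaled as $h_K^{-1}$.
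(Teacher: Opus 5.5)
The paper's proof is only a pointer to \cite[Theorem~2.1]{Du-Sayas:2020a} and \cite[Theorem~3.1]{Du-Sayas:2021}, so your closing fallback coincides with it. The self-contained construction that fills most of your proposal, however, does not go through as written.

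Stage~1 is over-determined for $k\ge 2$. The symmetric-gradient conditions fix $\Pi_{\vect{v}_s,K}\vect{v}_s$ up to a rigid motion. The moment conditions against $P_{k-1}(K;\Rn)$, which contains the rigid motions, then impose more constraints than there are free parameters; already for $k=2$, $d=2$, a solution fails to exist for general data. So ``unisolvence by Korn'' is not available.

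More importantly, your conclusion that $\Pi_{\sigma,K}$ and $\Pi_{\vect{v}_s,K}$ must be coupled is mistaken. The joint system you retreat to is never stated precisely (note the ``$\ldots$''), and its unisolvence is asserted without even a dimension count.

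Your own reduction already contains the fix. Since $R_K\in\mathcal{R}_k(\partial K;\Rn)$, the third identity forces $R_K=P(-\delta_\sigma\vect{n}+\tau_s\delta_v)$ with $P=P_{\widehat{V}_s}$. The $\tau_s$-terms cancel exactly (facewise constant $\tau_s$, as in (A1)). After an integration by parts, and since $(\Pi_{\sigma,K}\sigma)\vect{n}$ is facewise in $P_k$, the second identity is \emph{equivalent} to
\[
(\sigma-\Pi_{\sigma,K}\sigma,\varepsilon(\vect{w}))_K=\langle\sigma\vect{n},\vect{w}-P\vect{w}\rangle_{\partial K}\qquad\forall\,\vect{w}\in P_{k+1}(K;\Rn).
\]
Your displayed ``obstacle'' still contains a $-\tau_s\delta_v$ that has already cancelled, and $R_K$ is orthogonal to $\vect{w}-P\vect{w}$. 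So $\delta_v$ does not enter at all.

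This condition only prescribes the component of $\Pi_{\sigma,K}\sigma$ in $\varepsilon(P_{k+1}(K;\Rn))\subset P_k(K;\Rsym)$. It is consistent because, for $k\ge1$, every rigid motion $\vect{z}$ has $\varepsilon(\vect{z})=0$ and $P\vect{z}=\vect{z}$. So you can proceed as follows.
\begin{itemize}
\item[(i)] Take $\Pi_{\vect{v}_s,K}$ to be the $L^2(K)$-projection onto $P_{k+1}(K;\Rn)$; the first identity is then trivial.
\item[(ii)] Set $\Pi_{\sigma,K}\sigma:=\Pi^k\sigma-S\sigma$. Here $\Pi^k$ is the $L^2(K)$-projection onto $P_k(K;\Rsym)$, and $S\sigma\in\varepsilon(P_{k+1}(K;\Rn))$ represents the functional $\vect{w}\mapsto\langle\sigma\vect{n},\vect{w}-P\vect{w}\rangle_{\partial K}$.
\item[(iii)] Rewrite this functional as $\langle(\sigma-q)\vect{n},(\vect{w}-\vect{z})-P(\vect{w}-\vect{z})\rangle_{\partial K}$, with $q\in P_k(K;\Rsym)$ and a suitable rigid $\vect{z}$. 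A trace inequality plus Korn's inequality give $\|\vect{w}-\vect{z}\|_{\partial K}\le Ch_K^{1/2}\|\varepsilon(\vect{w})\|_K$, hence $\|S\sigma\|_K\le Ch_K^m|\sigma|_{m,K}$.
\item[(iv)] Finally, $h_K^{1/2}\|R_K\|_{\partial K}\le h_K^{1/2}(\|\sigma-\Pi_{\sigma,K}\sigma\|_{\partial K}+\tau_s\|\delta_v\|_{\partial K})$. Together with an inverse estimate for $S\sigma$ and $\tau_s\le Ch_K^{-1}$, this yields \eqref{eq:elasticity-interpolation-estimate}.
\end{itemize}

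This gives a short self-contained proof of the statement as formulated, which asks only for existence and does not determine the operators uniquely. The cited Du--Sayas projection is a more specific choice, but the paper only ever uses the three identities and the estimate.
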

\begin{proof}
	See \cite[Theorem~2.1]{Du-Sayas:2020a} and \cite[Theorem~3.1]{Du-Sayas:2021}. 
\end{proof}
\begin{theorem}
	Let $K$ be a simplex,  $k\geq 0$, $r > \frac 12$, and $\tau^{\max}_{f,K} = \max \tau_f|_{\partial K} > 0$. Then, there exists a projection 
	\begin{align*}
		\Pi_{\vect{v}_{f},K} \times \Pi_{p,K} : H^r(K; \mathbb{V}) \times H^r(K) \to {P}_k(K; \mathbb{V}) \times {P}_{k}(K)
	\end{align*}
	defined by 
	%
		\begin{alignat*}{4}
			(\vect{v}_{f} - \Pi_{\vect{v}_{f},K} \vect{v}_{f} , \vect{w})_{K} &= 0 & & \quad\forall \vect{w} \in {P}_{k-1}(K; \mathbb{V}), 
			\\
			(p - \Pi_{p,K} p , q)_{K} &= 0 & & \quad \forall q \in {P}_{k-1}(K), 
			\\
			\langle (\vect{v}_{f} - \Pi_{\vect{v}_{f},K} \vect{v}_{f}) \cdot \vect{n} + \tau_{f}(p - \Pi_{p,K}p ), \widehat{\vect{w}}_{s} \rangle_{\partial K} &= 0  & &\quad  \forall \widehat{\vect{w}}_{s} \in \mathcal{R}_{k}(\partial K)
		\end{alignat*}
	%
	and if $\vect{v}_{f} \in H^m(K; \mathbb{V})$, $p \in H^{m+1}(K)$ for $r\le m \le k+1$, then 
	\begin{align}
		\label{eq:darcy-interpolation-estimate}
		\| \vect{v}_{f} - \Pi_{\vect{v}_{f},K} \vect{v}_{f}\|_{K} + \| p - \Pi_{p,K} p \|_{K} 
		\le Ch_K^m (|\vect{v}_{f}|_{m, K} + |p|_{m, K}) .
	\end{align}
\end{theorem}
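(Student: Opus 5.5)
This is the classical LDG-H/HDG projection of Cockburn, Gopalakrishnan and Sayas for diffusion problems, applied here with $\vect{v}_f$ in the role of the flux and $p$ in the role of the scalar variable. The plan follows their argument in three steps: count conditions, prove uniqueness (hence existence), then get the estimate by a scaling/Bramble--Hilbert argument.

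\emph{Dimension count.} The unknowns $\Pi_{\vect{v}_f,K}\vect{v}_f \in P_k(K;\mathbb{V})$ and $\Pi_{p,K} p\in P_k(K)$ have $(d+1)\dim P_k(K)$ degrees of freedom. The conditions number
\[
d\dim P_{k-1}(K) + \dim P_{k-1}(K) + (d+1)\dim P_k(F).
\]
Using $\dim P_k(K) - \dim P_{k-1}(K) = \dim P_k(F)$, the two totals agree. So the defining system is square, and it suffices to show that zero data forces a zero solution.

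\emph{Uniqueness.} Take $(\vect{w},q)\in P_k(K;\mathbb{V})\times P_k(K)$ with $\vect{w}\perp P_{k-1}(K;\mathbb{V})$, $q\perp P_{k-1}(K)$, and $\vect{w}\cdot\vect{n}+\tau_f q=0$ on each face, tested against $\mathcal{R}_k(\partial K)$. Since the trace lies in $\mathcal{R}_k(\partial K)$, the face condition holds pointwise.
\begin{itemize}
\item Integrate by parts: $\langle \vect{w}\cdot\vect{n}, q\rangle_{\partial K} = (\divergence \vect{w}, q)_K + (\vect{w},\grad q)_K = 0$, because $\divergence\vect{w}\in P_{k-1}$ and $\grad q\in P_{k-1}$.
\item Hence $\langle \tau_f q, q\rangle_{\partial K}=0$, so $q=0$ on the face $F^*$ where $\tau_f=\tau^{\max}_{f,K}>0$.
\item Write $q=\lambda_{F^*}\tilde q$ with $\tilde q\in P_{k-1}(K)$, where $\lambda_{F^*}$ is the barycentric coordinate vanishing on $F^*$. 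Then $0=(q,\tilde q)_K=(\lambda_{F^*}\tilde q,\tilde q)_K$ gives $\tilde q=0$, so $q=0$.
\item With $q=0$, we get $\vect{w}\cdot\vect{n}=0$ on $\partial K$ and $\vect{w}\perp P_{k-1}(K;\mathbb{V})$. The classical Raviart--Thomas-type uniqueness argument then gives $\vect{w}=0$. Concretely: $\divergence\vect{w}\in P_{k-1}$ has $\|\divergence \vect{w}\|^2_K = -(\vect{w},\grad\divergence\vect{w})_K+\langle \vect{w}\cdot\vect{n},\divergence\vect{w}\rangle_{\partial K}=0$, so $\divergence\vect{w}=0$. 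Next, expand $\vect{w}$ in the normal directions of $d$ faces, $\vect{w}\cdot\vect{n}_i=\lambda_i\psi_i$ with $\psi_i\in P_{k-1}$. Testing the orthogonality with suitable vector fields dual to these directions yields $\psi_i=0$.
\end{itemize}
This last step is the main obstacle. If it cannot be cleanly done by hand, I would cite \cite{Cockburn:2010} (the analogous lemma), where this is exactly the argument.

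\emph{Estimate.} The projection is linear and preserves polynomials in $P_k(K;\mathbb{V})\times P_k(K)$, by uniqueness. Characterize it via the decomposition used by Cockburn et al.: $\Pi_p p - P_k p$ and $\Pi_{\vect{v}_f}\vect{v}_f - P_k\vect{v}_f$ are controlled by face residuals of $\vect{v}_f-P_k\vect{v}_f$ and $p-P_kp$. This gives
\[
\|\Pi_{p}p-p\|_K \le C\big(\|p-P_kp\|_K + h_K^{1/2}\|p-P_kp\|_{\partial K} + (h_K/\tau^{\max}_{f,K})^{1/2}\ldots\big).
\]
Under (A2), $\tau^{\max}_{f,K}\simeq 1$, so the constants are uniform. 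Alternatively, map to the reference element, where boundedness of the projection on $H^r$ ($r>1/2$, so traces exist) holds with a constant depending on $\tau$ bounded above and below. Then apply the Bramble--Hilbert lemma with polynomial invariance, and scale back to obtain $Ch_K^m(|\vect{v}_f|_{m,K}+|p|_{m,K})$. Care is needed with the $\tau$-scaling, since $\tau_f$ is dimensionally $O(1)$ rather than $h^{-1}$. For this reason I would track it through the explicit Cockburn et al. bounds, which show that $\tau^{\max}_{f,K}$ in $[c_1,c_2]$ suffices.
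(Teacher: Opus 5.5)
Your proposal is correct and is essentially the argument the paper relies on, since the paper's proof simply cites Cockburn--Gopalakrishnan--Sayas (and Du's book). That argument runs: a square system, uniqueness via integration by parts and the face where $\tau_f$ is maximal, then $\tau$-explicit bounds that are uniform under (A2).

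Three small remarks:
\begin{itemize}
\item Uniqueness tacitly needs $\tau_f\ge 0$ on every face of $K$, which you should state.
\item Your ``main obstacle'' is immediate. Testing the orthogonality with $\psi_i\vect{n}_i\in P_{k-1}(K;\mathbb{V})$ gives $(\lambda_i\psi_i,\psi_i)_K=0$, hence $\psi_i=0$. The $\divergence\vect{w}=0$ step is then unnecessary.
\item The reference-element route needs no special care. Pulling $\vect{v}_f$ (by a suitably scaled Piola map) and $p$ back with the same $h$-scaling gives an effective $\hat\tau\simeq\tau_f$ in a compact admissible set. Bramble--Hilbert then yields the uniform $h_K^m$ bound directly.
\end{itemize}
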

\begin{proof}
	See \cite{Cockburn:2010} and \cite{Du:book}. 
\end{proof}
The global projection operators are then defined by their restriction to each element $K$ by $\Pi_{\sigma}$, by $\Pi_{\sigma} \omega|_K := \Pi_{\sigma,K} \omega$ for $K\in \mathcal{T}_h$. Similarly we define $\Pi_{\vect{v}_s}$, $\Pi_{\vect{v}_f}$, $\Pi_{p}$, and $R(\sigma,\vect{v}_s)$.

\subsection{Error equations}\label{subsec:error-equations}

Let $(\sigma_{h}, \vect{v}_{s,h}, \vect{v}_{f,h}, p_{h}, \widehat{\vect{v}}_{s,h}, \widehat{p}_{h})$ be the HDG approximate solution of \eqref{eq:semidiscrete-eqs} with initial numerical data satisfying the approximation properties \eqref{eq:initial-data-constraint1}, \eqref{eq:initial-data-constraint2}, and the compatibility conditions \eqref{eq:initial-data-approximation1}, \eqref{eq:initial-data-approximation2}. Let $(\sigma, \vect{v}_s, \vect{v}_f, p)$ and $\widehat{\vect{v}}_s = \vect{v}_s|_{\Faces}, \widehat{p} = p|_{\Faces}$ the exact solution of \eqref{eq:1stordersystem}. Let $(\Pi_{\sigma}, \Pi_{\vect{v}_{s}})$ and $(\Pi_{\vect{v}_{f}}, \Pi_{p})$ be HDG projection operators onto the approximation spaces $\Sigma_h\times V_{s,h}$ and $V_{f,h}\times Q_h$, respectively,  defined in the previous section. We also denote by $P_{\widehat{V}_{s}}$ and $P_{\widehat{Q}}$ the $L^2$-projections onto the spaces $\widehat{V}_{s,h}$ and $\widehat{Q}_h$, respectively. Then we define the error of the approximate solutions $e_{\star}$, the intepolation error $e^I_{\star}$ and the projected error $e^{h}_{\star}$, for $\star = \sigma, \vect{v}_s, \vect{v}_f, p, \widehat{\vect{v}}_s, \widehat{p}$, by 
\begin{subequations}
	\label{eq:semidiscrete-error-decomposition-eqs}
	\begin{alignat}{4}
		\label{eq:semidiscrete-error-decomposition-eq1}
		e_{\sigma} &:= \sigma - \sigma_h & &= (\sigma - \Pi_{\sigma} \sigma) - (\sigma_h - \Pi_{\sigma} \sigma) & &=: e_{\sigma}^I - e_{\sigma}^h , 
		\\
		\label{eq:semidiscrete-error-decomposition-eq2}
		e_{\vect{v}_s} &:= \vect{v}_{s} - \vect{v}_{s,h} & &= (\vect{v}_{s} - \Pi_{\vect{v}_{s}} \vect{v}_{s}) - (\vect{v}_{s,h} - \Pi_{\vect{v}_{s}} \vect{v}_{s}) & &=: e_{\vect{v}_{s}}^I - e_{\vect{v}_{s}}^h , 
		\\
		\label{eq:semidiscrete-error-decomposition-eq3}
		e_{\vect{v}_f} &:= \vect{v}_{f} - \vect{v}_{f,h} & &= (\vect{v}_{f} - \Pi_{\vect{v}_{f}} \vect{v}_{f}) - (\vect{v}_{f,h} - \Pi_{\vect{v}_{f}} \vect{v}_{f}) & &=: e_{\vect{v}_{f}}^I - e_{\vect{v}_{f}}^h , 
		\\
		\label{eq:semidiscrete-error-decomposition-eq4}
		e_{p} &:= p - p_{h} & &= (p - \Pi_{p} p) - (p_{h} - \Pi_{p} p) & &=: e_{p}^I - e_{p}^h , 
		\\
		\label{eq:semidiscrete-error-decomposition-eq5}
		e_{\widehat{\vect{v}}_{s}} &= \widehat{\vect{v}}_{s} - \widehat{\vect{v}}_{s,h} & &= (\widehat{\vect{v}}_{s} - P_{\widehat{V}_{s}} \widehat{\vect{v}}_{s}) - (\widehat{\vect{v}}_{s,h} - P_{\widehat{V}_{s}} \widehat{\vect{v}}_{s}) & &=: \widehat{e}_{\vect{v}_{s}}^I - \widehat{e}_{\vect{v}_{s}}^h , 
		\\
		\label{eq:semidiscrete-error-decomposition-eq6}
		e_{\widehat{p}} &:= \widehat{p} - \widehat{p}_{h} & &= (\widehat{p} - {P_{\widehat{Q}}} \widehat{p}) - (\widehat{p}_{h} - {P_{\widehat{Q}}} \widehat{p}) & &=: \widehat{e}_{p}^I - \widehat{e}_{p}^h .
	\end{alignat}
\end{subequations}

Next, we derive an identity of the projected errors.
\begin{lemma}\label{eq:interpolationerror_eqns}
	Consider the definitions of the projected errors above $(e_\sigma^h, e_{\vect{v}_s}^{h}, e_{\vect{v}_f}^{h}, e_{p}^{h}, \widehat{e}_{\vect{v}_s}^{h}, \widehat{e}_{p}^{h})$. Then, they satisfy the following system
	\begin{subequations}
		\label{eq:semidiscrete-reduced-error-eqs}
		\begin{align}
			\label{eq:semidiscrete-reduced-error-eq1}
			\Thprod{\Ael(\dot{e}_{\sigma}^{h} + \alpha \dot{e}_{p}^{h} \Id)}{r} +\Thprod{\divergence r}{e_{\vect{v}_{s}}^{h}} -\pThprod{\widehat{e}_{\vect{v}_{s}}^{h}}{r \vect{n}} 
			&=  F_1^{I}(r),
			\\ 
			\label{eq:semidiscrete-reduced-error-eq25}			\Thprod{\rho_{11}\dot{e}_{\vect{v}_{s}}^{h}+\rho_{12}\dot{e}_{\vect{v}_{f}}^{h}}{\vect{w}_{s}} + \Thprod{e_{\sigma}^{h}}{\grad \vect{w}_{s}} &
            \\
            \notag 
            - \pThprod{e_{\sigma}^{h} \vect{n} - \tau_{s} ({P_{\widehat{V}_s}}{e}_{\vect{v}_{s}}^{h} - \widehat{e}_{\vect{v}_{s}}^{h})}{\vect{w}_{s}} 
			&=F^{I}_2(\vect{w}_s), 
			\\
			\label{eq:semidiscrete-reduced-error-eq3}			\Thprod{\rho_{12}\dot{e}_{\vect{v}_{s}}^{h}+\rho_{22}\dot{e}_{\vect{v}_{f}}^{h}}{\vect{w}_{f}} + \Thprod{\frac{\eta}{\kappa} e_{\vect{v}_{f}}^{h}}{\vect{w}_{f}} &
            \\
            \notag 
            - \Thprod{e_{p}^{h} }{\divergence\vect{w}_{f}} +\pThprod{\widehat{e}_{p}^{h}}{\vect{w}_{f}\cdot\vect{n}} 
			& = F_{3}^{I}(\vect{w}_{f}), 
			\\
			\label{eq:semidiscrete-reduced-error-eq4}
			\Thprod{s_{0}\dot{e}_{p}^{h}}{q} + \Thprod{\Ael(\dot{e}_{\sigma}^{h}+\alpha \dot{e}_{p}^{h}\Id)}{\alpha q\Id} - \Thprod{e_{\vect{v}_{f}}^{h}}{\grad q} &
            \\
            \notag 
            + \pThprod{e_{\vect{v}_{f}}^{h}\cdot \vect{n} + \tau_f (e_p^{h} - \hat{e}_p^{h})}{q} 
			&  = F_{4}^{I}(q),
			\\
			\label{eq:semidiscrete-reduced-error-eq5}
			\pThprod{e_{\sigma}^{h} \vect{n} + \tau_{s}(e_{\vect{v}_s}^{h} - \widehat{e}_{\vect{v}_s}^{h})}{\widehat{\vect{w}}_s}&= F_5^{I}(\widehat{\vect{w}}_s),
			\\
			\label{eq:semidiscrete-reduced-error-eq6}
			\pThprod{e_{\vect{v}_{f}}^{h}\cdot \vect{n} + \tau_{f}(e_{p}^{h} - \widehat{e}_{p}^{h})}{\widehat{q}}& = 0 
		\end{align}
		for any $(r, \vect{w}_s, \vect{w}_f, q, \widehat{\vect{w}}_s, \widehat{q}) \in \Sigma_{h} \times V_{s,h} \times V_{f,h} \times Q_{h} \times \widehat{V}_{s,h}\times \widehat{Q}_{h}$, and with functionals depending on the projection errors $e_\cdot^I$, given by
		\begin{align*}
			F_1^{I}(r) &= \Thprod{\Ael(\dot{e}_\sigma^{I} + \alpha \dot{e}_p^{I} \Id )}{r}, 
			\\
			F_2^{I}(\vect{w}_s) &= \Thprod{\rho_{11}\dot{e}_{\vect{v}_{s}}^{I}+\rho_{12}\dot{e}_{\vect{v}_{f}}^{I}}{\vect{w}_{s}} + \pThprod{R(\sigma, \vect{v}_s)}{\vect{w}_s},\\
			F_3^{I}(\vect{w}_f) &= \Thprod{\rho_{12}\dot{e}_{\vect{v}_s}^{I} + \rho_{22}\dot{e}_{v_f}^{I}} {\vect{w}_f} + \Thprod{\frac{\eta}{\kappa} e_{\vect{v}_f}^{I}}{\vect{w}_f}, \\ 
			F_4^{I}(q) &=\Thprod{s_0 \dot{e}_p^{I}}{q} + \Thprod{\Ael(\dot{e}_\sigma^I + \alpha \dot{e}_p^{I} \Id)}{\alpha q\Id},\quad F_5^{I}(\widehat{\vect{w}}_s) = \pThprod{R(\sigma, \vect{v}_s)}{\widehat{\vect{w}}_s}.
		\end{align*}
	\end{subequations}
\end{lemma}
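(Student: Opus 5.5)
The plan is to subtract the semidiscrete equations \eqref{eq:semidiscrete-eqs} from the consistency equations \eqref{eq:variational-eqs} of Lemma~\ref{lemma:consistency}. This gives a system for the full errors $(e_\sigma,e_{\vect{v}_s},e_{\vect{v}_f},e_p,e_{\widehat{\vect{v}}_s},e_{\widehat p})$ with zero right-hand sides. Note that $\widehat{\vect{w}}_s$ vanishes on $\Gamma_d$ and $\widehat q$ vanishes on $\Gamma_p$, and that the exact $\sigma\vect{n}$, $\vect{v}_f\cdot\vect{n}$ are single-valued on interior faces and zero on $\Gamma_t$, $\Gamma_f$. Hence all exact boundary contributions cancel in the trace equations.

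Next, insert the splittings $e_\star=e_\star^I-e_\star^h$ and move every term involving $e^I_\star$ to the right-hand side. The projected errors then satisfy the stated system with sign $-1$ on the left, so we multiply through by $-1$. The remaining work is to show that the interpolation terms reduce to the listed functionals $F_i^I$, equation by equation.

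\textbf{Eq.~1.} Since $\divergence r\in P_{k-1}(K;\mathbb{V})$, the orthogonality $(e^I_{\vect{v}_s},\vect{w})_K=0$ for $\vect{w}\in P_{k-1}$ kills $\Thprod{\divergence r}{e^I_{\vect{v}_s}}$. Since $r\vect{n}|_F\in P_k(F)$, the $L^2$ property of $P_{\widehat V_s}$ kills $\pThprod{\widehat e^I_{\vect{v}_s}}{r\vect{n}}$. Only the $\Ael$ term survives, which is $F_1^I$.

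\textbf{Eq.~3.} Here $\divergence\vect{w}_f\in P_{k-1}$ kills the $e^I_p$ volume term, and $\vect{w}_f\cdot\vect{n}\in P_k(F)$ kills the $\widehat e^I_p$ face term. This leaves the mass and friction terms, i.e.\ $F_3^I$.

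\textbf{Eq.~4.} Integrate $-\Thprod{e^I_{\vect{v}_f}}{\grad q}$ by parts, or directly use $\grad q\in P_{k-1}$ so that it vanishes. The face term $\pThprod{e^I_{\vect{v}_f}\cdot\vect{n}+\tau_f(e^I_p-\widehat e^I_p)}{q}$ vanishes by the third defining relation of $(\Pi_{\vect{v}_f},\Pi_p)$, because $q|_{\partial K}\in\mathcal{R}_k(\partial K)$ and $\tau_f$ is facewise constant, so $\pThprod{\tau_f\widehat e^I_p}{q}=0$. The same argument with $\widehat q$, together with single-valuedness of the exact flux, gives equation 6 with zero right-hand side.

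\textbf{Eqs.~2 and 5 (the main obstacle).} Write $\Thprod{e^I_\sigma}{\grad\vect{w}_s}-\pThprod{e^I_\sigma\vect{n}}{\vect{w}_s}=-\Thprod{\divergence e^I_\sigma}{\vect{w}_s}$. The stabilization term involves $P_{\widehat V_s}e_{\vect{v}_s}-e_{\widehat{\vect{v}}_s}$, and its interpolation part is $P_{\widehat V_s}e^I_{\vect{v}_s}-\widehat e^I_{\vect{v}_s}$. Combining these, the leftover is exactly the left side of the second defining relation in Theorem~\ref{thm:elasticity-interpolation}, which equals $\pThprod{R(\sigma,\vect{v}_s)}{\vect{w}_s}$. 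For equation 5, the projection relation is written with $\tau_s(\vect{v}_s-\Pi\vect{v}_s)$ tested against $\widehat{\vect{w}}_s\in\mathcal{R}_k$. This coincides with $\tau_s P_{\widehat V_s}(\cdot)$ because $\tau_s$ is facewise constant, and the $\widehat e^I_{\vect{v}_s}$ contribution is orthogonal to $\widehat{\vect{w}}_s$. We obtain $F_5^I$ after accounting for the sign; this relies on the identity $P_{\widehat V_s}\vect{v}_s=\widehat{\vect{v}}_s$ tested on trace functions.

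The main care needed is tracking the signs of the $\tau_s$ terms and consistently identifying $P_{\widehat V_s}e^h_{\vect{v}_s}$ with $e^h_{\vect{v}_s}$ when tested against $\widehat{\vect{w}}_s$.
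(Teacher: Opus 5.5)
You follow the paper's route: subtract the discrete equations from the consistency identities, split $e_\star=e_\star^I-e_\star^h$, and remove interpolation terms using the projection properties. Your treatment of equations 1, 3, 4 and 6 is fine. The step that fails is equation 2. Subtracting \eqref{eq:semidiscrete-eq2} from \eqref{eq:variational-eq2} and splitting, the interpolation remainder is $\Thprod{\rho_{11}\dot{e}^I_{\vect{v}_s}+\rho_{12}\dot{e}^I_{\vect{v}_f}}{\vect{w}_s}$ plus
\[
-\Thprod{\divergence e^I_\sigma}{\vect{w}_s}+\pThprod{\tau_s P_{\widehat{V}_s}e^I_{\vect{v}_s}}{\vect{w}_s}-\pThprod{\tau_s\widehat{e}^I_{\vect{v}_s}}{\vect{w}_s}.
\]
Only the first two of these form the left side of the second defining relation in Theorem~\ref{thm:elasticity-interpolation}. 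The last term is not zero. $\tau_s$ is facewise constant, but $\vect{w}_s|_F\in P_{k+1}(F;\mathbb{V})$, whereas $\widehat{e}^I_{\vect{v}_s}=\vect{v}_s-P_{\widehat{V}_s}\vect{v}_s$ is orthogonal only to $P_k(F;\mathbb{V})$. Your sentence ``combining these, the leftover is exactly the left side of the second defining relation'' silently drops this term. The paper's written proof does the same when it asserts $\pThprod{\tau_s\widehat{e}^I_{\vect{v}_s}}{\vect{w}_s}=0$. The root cause is that \eqref{eq:variational-eq2} is not satisfied by the exact solution for $\vect{w}_s\in V_{s,h}$. It differs from the true momentum equation by $\pThprod{\tau_s(P_{\widehat{V}_s}\vect{v}_s-\vect{v}_s)}{\vect{w}_s}=-\pThprod{\tau_s\widehat{e}^I_{\vect{v}_s}}{\vect{w}_s}$, which is exactly the term you lose. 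So two errors cancel: the stated equation 2 is true, but your derivation of it is not valid.

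To repair it, use for the exact solution the equation without stabilization, $\Thprod{\rho_{11}\dot{\vect{v}}_s+\rho_{12}\dot{\vect{v}}_f}{\vect{w}_s}+\Thprod{\sigma}{\grad\vect{w}_s}-\pThprod{\sigma\vect{n}}{\vect{w}_s}=\Thprod{\vect{f}}{\vect{w}_s}$. Subtract \eqref{eq:semidiscrete-eq2} and split only the discrete stabilization. Since $P_{\widehat{V}_s}\widehat{\vect{v}}_s=P_{\widehat{V}_s}\vect{v}_s$,
\[
P_{\widehat{V}_s}\vect{v}_{s,h}-\widehat{\vect{v}}_{s,h}=(P_{\widehat{V}_s}e^h_{\vect{v}_s}-\widehat{e}^h_{\vect{v}_s})-P_{\widehat{V}_s}e^I_{\vect{v}_s}.
\]
The interpolation remainder is then exactly $\Thprod{\rho_{11}\dot{e}^I_{\vect{v}_s}+\rho_{12}\dot{e}^I_{\vect{v}_f}}{\vect{w}_s}-\Thprod{\divergence e^I_\sigma}{\vect{w}_s}+\pThprod{\tau_sP_{\widehat{V}_s}e^I_{\vect{v}_s}}{\vect{w}_s}=F_2^I(\vect{w}_s)$.

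Your ``after accounting for the sign'' in equation 5 also hides a real discrepancy. The same splitting in the trace equation, together with the third defining relation, gives
\[
\pThprod{e^h_\sigma\vect{n}-\tau_s(P_{\widehat{V}_s}e^h_{\vect{v}_s}-\widehat{e}^h_{\vect{v}_s})}{\widehat{\vect{w}}_s}=-\pThprod{R(\sigma,\vect{v}_s)}{\widehat{\vect{w}}_s}.
\]
This is the form used in \eqref{eq:fully-discrete-reduced-error-eq5} and in the energy argument, where it produces $\pThprod{R(\sigma,\vect{v}_s)}{P_{\widehat{V}_s}e^h_{\vect{v}_s}-\widehat{e}^h_{\vect{v}_s}}$. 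The displayed \eqref{eq:semidiscrete-reduced-error-eq5}, with $+\tau_s$ and $+F_5^I$, does not follow from this computation. You should state explicitly that this corrected identity is what you prove, rather than adjusting signs silently.
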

\begin{proof}
	We begin by deriving the error equations. By subtracting the equation of the semidiscrete system \eqref{eq:semidiscrete-eqs} from those satisfied by the exact solution \eqref{eq:variational-eqs}, we obtain
	\begin{subequations}
		\label{eq:semidiscrete-error-eqs}
		\begin{align}
			\label{eq:semidiscrete-error-eq1}
			\Thprod{\Ael(\dot{e}_{\sigma} + \alpha \dot{e}_{p} \Id)}{r} +\Thprod{\divergence r}{e_{\vect{v}_{s}}} -\pThprod{\widehat{e}_{\vect{v}_{s}}}{r \vect{n}} &= 0 ,
			\\ 
			\label{eq:semidiscrete-error-eq2}
			\Thprod{\rho_{11}\dot{e}_{\vect{v}_{s}} +\rho_{12}\dot{e}_{\vect{v}_{f}}}{\vect{w}_{s}} + \Thprod{e_{\sigma}}{\grad \vect{w}_{s}} &
            \\
            \notag 
            - \pThprod{e_{\sigma} \vect{n}- \tau_{s} (\vect{v}_{s} - {P_{\widehat{V}_s}} \vect{v}_{s,h} - \widehat{e}_{\vect{v}_{s}})}{\vect{w}_{s}} & = 0, 
			\\
			\label{eq:semidiscrete-error-eq3}
			\Thprod{\rho_{12}\dot{e}_{\vect{v}_{s}}+\rho_{22}\dot{e}_{\vect{v}_{f}}}{\vect{w}_{f}} + \Thprod{\frac{\eta}{\kappa} e_{\vect{v}_{f}}}{\vect{w}_{f}} - \Thprod{e_{p} }{\divergence\vect{w}_{f}} +\pThprod{\widehat{e}_{p}}{\vect{w}_{f}\cdot\vect{n}} & = 0, 
			\\
			\label{eq:semidiscrete-error-eq4}
			\Thprod{s_{0}\dot{e}_{p}}{q} + \Thprod{\Ael(\dot{e}_{\sigma}+\alpha \dot{e}_{p}\Id)}{\alpha q\Id} - \Thprod{e_{\vect{v}_{f}}}{\grad q} &
            \\
            \notag 
            + \pThprod{e_{\vect{v}_{f}}\cdot \vect{n} + \tau_f (e_p - \widehat{e}_p)}{q} & = 0,
			\\
			\label{eq:semidiscrete-error-eq5}
			\pThprod{e_{\sigma} \vect{n} - \tau_{s} (P_{\widehat{V}_s} e_{\vect{v}_{s}} - \widehat{e}_{\vect{v}_{s}})}{\widehat{\vect{w}}_{s}} & = 0 ,
			\\
			\label{eq:semidiscrete-error-eq6}
			\pThprod{e_{\vect{v}_f}\cdot \vect{n} + \tau_{f}(e_{p} - \widehat{e}_{p})}{\widehat{q}} & = 0 
		\end{align}
	\end{subequations}
	for all $0<t<T$ and for any $(r, \vect{w}_{s}, \vect{w}_{f}, q, \widehat{\vect{w}}_{s}, \widehat{q}) \in \Sigma_{h} \times V_{h} \times V_{h} \times Q_{h} \times \widehat{V}_{f,h}\times \widehat{Q}_{h}$. We proceed by using the splitting of the error $e_{\star} = e_{\star}^{I} - e_{\star}^{h}$ in the error equations above. Then, the first equation gives
	\[
	\Thprod{\Ael(\dot{e}_{\sigma}^{I} + \alpha \dot{e}_{p}^{I} \Id)}{r} +\Thprod{\divergence r}{e_{\vect{v}_{s}}^{I}} -\pThprod{\widehat{e}_{\vect{v}_{s}}^{I}}{r \vect{n}} = \Thprod{\Ael(\dot{e}_{\sigma}^{I} + \alpha \dot{e}_{p}^{I} \Id)}{r} = F_{1}^{I}(r)
	\]
	where we use the definition of the operator $\Pi_{\vect{v}_s}$ and the definition of the $L^{2}-$projection $P_{\widehat{V}_s}$.
	The second equation follows after integration by parts and the definition of the operators $\Pi_{\sigma}, \Pi_{\vect{v}_s}, R$, and observing that $\pThprod{\tau_{s}\widehat{e}_{\vect{v}_s}^{I}}{\vect{w}_s} = 0$
	\begin{align*}
		\Thprod{\rho_{11}\dot{e}_{\vect{v}_{s}}^{I}+\rho_{12}\dot{e}_{\vect{v}_{f}}^{I}}{\vect{w}_{s}} + &\Thprod{e_{\sigma}^{I}}{\grad \vect{w}_{s}} - \pThprod{e_{\sigma}^{I} \vect{n} - \tau_{s} ({P_{\widehat{V}_s}}{e}_{\vect{v}_{s}}^{h} - \widehat{e}_{\vect{v}_{s}}^{I})}{\vect{w}_{s}} \\
		&=\Thprod{\rho_{11}\dot{e}_{\vect{v}_{s}}^{I}+\rho_{12}\dot{e}_{\vect{v}_{f}}^{I}}{\vect{w}_{s}} + 
		\pThprod{R(\sigma, \vect{v}_s)}{\vect{w}_s} = F^{I}_2(\vect{w}_s).
	\end{align*}
	Next, the definition of the operator $\Pi_{\vect{v}_f}$ it implies that $\Thprod{e_{p}^{I}}{\divergence \vect{w}_f} = 0$, and the $L^2-$projection $P_{\widehat{Q}}$ implies that  $\pThprod{\widehat{e}_{p}^{h}}{\vect{w}_{f}\cdot\vect{n}}=0 $. Then, it follows that
	\begin{align*}
		\Thprod{\rho_{12}\dot{e}_{\vect{v}_{s}}^{h}+\rho_{22}\dot{e}_{\vect{v}_{f}}^{I}}{\vect{w}_{f}} + &\Thprod{\frac{\eta}{\kappa} e_{\vect{v}_{f}}^{I}}{\vect{w}_{f}} - \Thprod{e_{p}^{h} }{\divergence\vect{w}_{f}} +\pThprod{\widehat{e}_{p}^{h}}{\vect{w}_{f}\cdot\vect{n}}  \\ 
		&= \Thprod{\rho_{12}\dot{e}_{\vect{v}_{f}}^{I}+\rho_{22}\dot{e}_{\vect{v}_{f}}^{I}}{\vect{w}_{f}} + \Thprod{\frac{\eta}{\kappa} e_{\vect{v}_{f}}^{I}}{\vect{w}_{f}} = F_{3}^{I}(\vect{w}_{f}).
	\end{align*}
	Similarly, using the definition of the HDG operator $(\Pi_{\vect{v}_{f}}, \Pi_{p})$, gives that
	\begin{multline*}
		\Thprod{s_{0}\dot{e}_{p}^{I}}{q} + \Thprod{\Ael(\dot{e}_{\sigma}^{I}+\alpha \dot{e}_{p}^{I}\Id)}{\alpha q\Id} - \Thprod{e_{\vect{v}_{f}}^{I}}{\grad q} + \pThprod{e_{\vect{v}_{f}}^{I}\cdot \vect{n} + \tau_f (e_p^{I} - \hat{e}_p^{I})}{q} 
        \\ 
		=  \Thprod{s_{0}\dot{e}_{p}^{I}}{q} + \Thprod{\Ael(\dot{e}_{\sigma}^{I}+\alpha \dot{e}_{p}^{I}\Id)}{\alpha q\Id}   = F_{4}^{I}(q).
	\end{multline*}
	The last two equations follow applying the definition of the operators $(\Pi_{\vect{v}_s}, \Pi_{\sigma}, R)$, and $P_{\widehat{V}_h}$ and $(\Pi_{\vect{v}_f}, \Pi_{p})$
	\begin{align*}
		\pThprod{e_{\sigma}^{I} \vect{n} + \tau_{s}(e_{\vect{v}_s}^{I} - \widehat{e}_{\vect{v}_s}^{I})}{\widehat{\vect{w}}_s}&=
		\pThprod{R(\sigma, \vect{v}_s) }{\widehat{\vect{w}}_s} - \pThprod{\tau_{s} \widehat{e}_{\vect{v}_s}^{I}}{\widehat{\vect{w}}_s} 
        \\
        &= \pThprod{R(\sigma, \vect{v}_s) }{\widehat{\vect{w}}_s} =F_5^{I}(\widehat{\vect{w}}_s),
		\\
		\pThprod{e_{\vect{v}_{f}}^{I}\cdot \vect{n} + \tau_{f}(e_{p}^{I} - \widehat{e}_{p}^{I})}{\widehat{q}}& = 0 .
	\end{align*}
\end{proof}

\subsection{Semidiscrete error estimates} \label{subsec:energy-estimates} 
We conclude this section by presenting its main result, the semidiscrete error estimate of the HDG approximation given in \eqref{eq:semidiscrete-eqs}.
\begin{theorem}\label{thm:semidiscrete-error-estimate}
	Suppose that $(\sigma_{h}, \vect{v}_{s,h}, \vect{v}_{f,h}, p_{h}, \widehat{\vect{v}}_{s,h}, \widehat{p}_{h})$ is a solution of \eqref{eq:semidiscrete-eqs} with numerical initial condition satisfying \eqref{eq:initial-data-constraint1}, \eqref{eq:initial-data-constraint2}, \eqref{eq:initial-data-approximation1}, \eqref{eq:initial-data-approximation2}. If exact solutions satisfy the regularity assumptions to make the quantities in the below estimates well-defined, then we have 
	\begin{align}
		\label{eq:semidiscrete-error-estimate}
		&\| e_{\sigma} (t) + \alpha e_{p}(t)\Id \|_{\Ael} + \|e_{p}(t)\|_{s_0} + \rho_0 (\|e_{\vect{v}_{s}}(t)\|_{L^2} + \|e_{\vect{v}_{f}}(t)\|_{L^2} )
		\\
		\notag
		&\leq C h^m (| \sigma(0), \vect{v}_{f}(0), p(0)|_{m} + |\vect{v}_{s}(0)|_{m})
		\\ 
		\notag
		&\quad + C h^m \left[ \int_0^t (| \dot{\sigma}(s), {\dot{\vect{v}}_{f}}(s), \dot{p}(s)|_{m} + |\dot{\vect{v}}_{s}(s) |_{m+1}) \,ds +  | \sigma (t), p(t), \vect{v}_{f}(t)|_{m} + |\vect{v}_{s}(t)|_{m} \right]
		\\ 
		\notag
		&\quad {
			+ C h^m \left[ \int_0^t (| {\sigma}(s), {\vect{v}}_{f}(s)|_{m}^2 + |{\vect{v}}_{s}(s) |_{m+1}^2) \,ds \right]^{\frac 12} }        
	\end{align}
	for $1\le m \le k+1$. 
	Moreover, if we assume that the coefficients satisfy an assumption 
	\begin{align}
		\Ael (x) \alpha \Id : \alpha \Id \le C_s s_0 (x) \qquad \text{a.e. } x \in \Omega    
	\end{align}
	with a uniform constant $C_s>0$, then $\| e_{\sigma} (t) \|_{\Ael}$ satisfies an error estimate with the same right-hand side of \eqref{eq:semidiscrete-error-estimate} and a constant depending on $C_s$. 
\end{theorem}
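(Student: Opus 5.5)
The plan is to split each error as $e_\star = e_\star^I - e_\star^h$ and bound the interpolation parts directly by the approximation estimates \eqref{eq:elasticity-interpolation-estimate} and \eqref{eq:darcy-interpolation-estimate}. The whole argument then reduces to controlling the projected errors $(e_\sigma^h, e_{\vect{v}_s}^h, e_{\vect{v}_f}^h, e_p^h, \widehat{e}_{\vect{v}_s}^h, \widehat{e}_p^h)$.

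\textbf{Energy identity for the projected errors.} These satisfy the system of Lemma~\ref{eq:interpolationerror_eqns}, which has exactly the structure of \eqref{eq:general-semidiscrete-eqs}. The only differences are harmless sign conventions in the fifth and sixth equations, which can be absorbed by replacing $\widehat{e}^h$ by its negative if necessary. So I would apply Lemma~\ref{lemma:energy} with $F_i=F_i^I$ and $F_6=0$. This gives
\[
X_h(t)^2 + 2Y_h(t)^2 = X_h(0)^2 + 2\int_0^t \bigl(F_1^I(e_\sigma^h)+F_2^I(e^h_{\vect{v}_s})+F_3^I(e^h_{\vect{v}_f})+F_4^I(e^h_p)+F_5^I(\widehat{e}^h_{\vect{v}_s})\bigr)\,ds .
\]
By \eqref{eq:rho-coercivity}, $X_h$ controls $\|e^h_\sigma+\alpha e^h_p\Id\|_{\Ael}$, $\|e^h_p\|_{s_0}$, and $\rho_0$ times the velocity norms. $Y_h$ controls the $\eta/\kappa$ term together with the $\tau_s$- and $\tau_f$-stabilization terms.

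\textbf{Bounding the right-hand side.} I would regroup $F_1^I(e_\sigma^h)+F_4^I(e_p^h)=(\Ael(\dot e^I_\sigma+\alpha\dot e^I_p\Id), e^h_\sigma+\alpha e^h_p\Id)+(s_0\dot e^I_p,e^h_p)$. By Cauchy--Schwarz in the $\Ael$- and $s_0$-seminorms this is bounded by $\|\dot e^I\|\,X_h$, so coercivity of $\Ael$ is never needed, which is the locking-free point. The $\rho$-terms in $F_2^I,F_3^I$ are bounded in the same way by $\|\dot e^I_{\vect{v}}\|\,X_h$. The $\eta/\kappa$ term is absorbed into $Y_h^2$ by Young's inequality, which produces the square-integrated term in \eqref{eq:semidiscrete-error-estimate}.

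The $R$-terms combine as $\pThprod{R(\sigma,\vect{v}_s)}{e^h_{\vect{v}_s}-\widehat{e}^h_{\vect{v}_s}}$. Since $R$ lies in $\mathcal{R}_k$, this equals $\pThprod{R}{P_{\widehat{V}_s}e^h_{\vect{v}_s}-\widehat{e}^h_{\vect{v}_s}}$. Using (A3) it is bounded by $\|h^{1/2}R\|\,\|\tau_s^{1/2}(P_{\widehat{V}_s}e^h_{\vect{v}_s}-\widehat{e}^h_{\vect{v}_s})\|$, and the second factor is absorbed into $Y_h$. Theorem~\ref{thm:elasticity-interpolation} gives $\|h^{1/2}R\|\le Ch^m(|\sigma|_m+|\vect{v}_s|_{m+1})$, which again yields a square-integrated term.

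\textbf{Gronwall step and initial data.} The previous step gives an inequality of the form $X_h(t)^2\le X_h(0)^2+C\int_0^t a(s)X_h(s)\,ds + C\int_0^t b(s)^2\,ds$. A standard Gronwall-type argument (taking the max over $[0,t]$) then gives $X_h(t)\le X_h(0)+C\int_0^t a+C(\int_0^t b^2)^{1/2}$. For $X_h(0)$ I would use the triangle inequality, \eqref{eq:initial-data-approximation1}, \eqref{eq:initial-data-approximation2}, and the projection estimates at $t=0$. Combining with the interpolation errors at time $t$ by the triangle inequality then gives \eqref{eq:semidiscrete-error-estimate}.

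\textbf{The refined estimate.} Under the additional assumption $\Ael\alpha\Id:\alpha\Id\le C_s s_0$, I would write $\|e_\sigma\|_{\Ael}\le\|e_\sigma+\alpha e_p\Id\|_{\Ael}+\|\alpha e_p\Id\|_{\Ael}$ and bound the second term by $C_s^{1/2}\|e_p\|_{s_0}$.

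\textbf{Main obstacle.} The delicate point is the handling of $F_5^I$ together with the $R$-part of $F_2^I$. Because the test function in $F_2^I$ is $e^h_{\vect{v}_s}$ and not its facet projection, the argument relies on $R\in\mathcal{R}_k$ so that $\pThprod{R}{e^h_{\vect{v}_s}}=\pThprod{R}{P_{\widehat{V}_s}e^h_{\vect{v}_s}}$. The signs in the fifth error equation must also be checked carefully against the lemma. A secondary issue is that only the $\Ael$-seminorm of the combination $e_\sigma+\alpha e_p\Id$ is available, so every stress-related term must be kept in that combined form.
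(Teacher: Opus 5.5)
Your proposal is correct and follows essentially the same route as the paper. Both apply Lemma~\ref{lemma:energy} to the projected errors, bound the time-derivative interpolation terms by Cauchy--Schwarz in the $\Ael$-, $s_0$- and density-weighted norms (never using coercivity of $\Ael$), and absorb the $\eta/\kappa$ and $R$ contributions into $Y$ via $R(\sigma,\vect{v}_s)\in\mathcal{R}_k$ and (A3). Both then close with the max-over-$[0,t]$ completing-the-square argument and finish with the triangle inequality, the initial-data bounds and the $C_s$ refinement.

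One small correction concerns the fifth error equation. Negating $\widehat{e}^h_{\vect{v}_s}$ does not repair the printed equation, because it would also change the first two equations. Instead, derive it directly from \eqref{eq:semidiscrete-eq5} and the third projection property. This gives left-hand side $\pThprod{e^h_\sigma\vect{n}-\tau_s(P_{\widehat{V}_s}e^h_{\vect{v}_s}-\widehat{e}^h_{\vect{v}_s})}{\widehat{\vect{w}}_s}$ and right-hand side $-\pThprod{R(\sigma,\vect{v}_s)}{\widehat{\vect{w}}_s}$. That fits \eqref{eq:general-semidiscrete-eq5} as is and yields exactly your combined term $\pThprod{R(\sigma,\vect{v}_s)}{P_{\widehat{V}_s}e^h_{\vect{v}_s}-\widehat{e}^h_{\vect{v}_s}}$.
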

\begin{proof}   
	The proof follows applying the energy equality Lemma \ref{lemma:energy} for $(e_{\sigma}^{h}, e_{\vect{v}_{s}}^h, e_{\vect{v}_{f}}^h, e_{p}^h, \widehat{e}_{\vect{v}_{s}}^{h}, \widehat{e}_{p}^{h})$ and 
	%
	%
	\begin{align*}
		X(t)^2 &= \Thprod{\Ael(e_{\sigma}^{h}(t) + \alpha e_{p}^{h}(t) \Id)}{e_{\sigma}^{h} + \alpha e_{p}^{h}(t) \Id)} +\Thprod{s_{0}e_{p}^{h}(t)}{e_{p}^h(t)}
		\\
		&\quad + \Thprod{\rho_{11} e_{\vect{v}_{s}}^{h}(t)}{e_{\vect{v}_{s}}^{h}(t)} +\Thprod{2\rho_{12} e_{\vect{v}_{s}}^{h}(t)}{e_{\vect{v}_{f}}^{h}(t)} + \Thprod{\rho_{22} e_{\vect{v}_{f}}^{h}(t)}{e_{\vect{v}_{f}}^{h}(t)} ,
		\\
		Y(t)^2 &= \int_0^t \left( \Thprod{\frac{\eta}{\kappa} e_{\vect{v}_{f}}^{h}(s)}{e_{\vect{v}_{f}}^{h}(s)} + \pThprod{\tau_{s} (P_{\widehat{V}_s} e_{\vect{v}_{s}}^{h}(s) - \widehat{e}_{\vect{v}_{s}}^{h}(s))}{P_{\widehat{V}_s} e_{\vect{v}_{s}}^{h}(s) - \widehat{e}_{\vect{v}_{s}}^{h}(s)} \right) \,ds
		\\
		&\quad + \int_0^t \pThprod{\tau_{f} (e_{p}^{h}(s) - \widehat{e}_{p}^{h}(s))}{e_{p}^{h}(s) - \widehat{e}_{p}^{h}(s)} \,ds .
	\end{align*}
	Thus, by Lemma \ref{lemma:energy}, 
	\begin{multline}
		\label{eq:semidiscrete-energy-error}
		X(t)^2 - X(0)^2 + 2Y(t)^2 
        \\
        = \left( (F_1^I(\sigma_h(s)) + F_2^I(\widehat{\vect{v}}_{s,h}(s)) + F_3^I(\widehat{\vect{v}}_{f,h}) + F_4^I(p_h(s)) + F_5^I(\widehat{\vect{v}}_{s,h}(s))\right) \, ds
	\end{multline}
	where the last term comes from $\pThprod{R(\sigma, \vect{v}_{s})}{e_{\vect{v}_{s}}^{h}} = \pThprod{R(\sigma, \vect{v}_{s})}{P_{\widehat{V}_s} e_{\vect{v}_{s}}^{h}}$. 
	
	Applying spatial and space-time Cauchy--Schwarz inequalities to \eqref{eq:semidiscrete-energy-error} gives
	\begin{align*}
		X(t)^2 - X(0)^2 + 2 Y(t)^2 \le \int_0^t F(s) X(s)\,ds + G(t) Y(t) 
	\end{align*}
	where 
	\begin{align*}
		F(s) &= \|\dot{e}_{\sigma}^{I}(s) + \alpha \dot{e}_{p}^{I}(s) \Id\|_{\Ael} + \|\dot{e}_p^I\|_{s_0} + \rho_0^{-1/2} \| \rho_{11}^{1/2} \dot{e}_{\vect{v}_{s}}^I, \rho_{12}^{1/2} \dot{e}_{\vect{v}_{s}}^I, \rho_{12}^{1/2} \dot{e}_{\vect{v}_{f}}^I, \rho_{22}^{1/2} \dot{e}_{\vect{v}_{s}}^I\| , 
		\\
		G(t) &= \left[\int_0^t \left(\left\| \frac{\eta^{1/2}}{\kappa^{1/2}} e_{\vect{v}_{f}}^I (s) \right\|_{L^2}^2  + \tau_{s}^{-1}\| R(\sigma(s), \vect{v}_{s}(s)) \|_{\partial \mathcal{T}_h}^2 \right) \,ds\right]^{\frac 12} .
	\end{align*}
	We now show  
		\begin{align}
			\label{eq:integral-estimate}
			X(t) \le X(0) + \int_0^t F(s) \,ds + \frac{1}{2\sqrt{2}} G(t) .
		\end{align}
		A similar result with bigger constants was obtained in \cite[Lemma~1]{Lee:2016} but here we claim \eqref{eq:integral-estimate} which has smaller constants and give a self-contained proof. First, by Young's inequality one can get 
		\begin{align}
			\label{eq:integral-inequality}
			X(t)^2 - X(0)^2 \le \int_0^t F(s) X(s)\,ds + \frac 18 G(t)^2 .
		\end{align}
		Suppose that $X(t_M) = \max_{0\le s\le t} X(s)$ for $0\le t_M \le t$. Note that \eqref{eq:integral-inequality} also holds for $t_M$, so we have 
		\begin{align*}
			X(t_M)^2 - X(0)^2 \le \int_0^{t_M} F(s) X(s) \,ds + \frac 18 G(t_M)^2 \le \int_0^{t_M} F(s) \,ds X(t_M) + \frac 18 G(t_M)^2 .
		\end{align*}
		Regarding the completing square formula, 
		\begin{align*}
			\left( X(t_M) - \frac 12 \int_0^{t_M} F(s)\,ds \right)^2 \le X(0)^2 + \frac 14 \left(\int_0^{t_M} F(s)\,ds\right)^2 + \frac 18 G(t_M)^2 .
		\end{align*}
		Taking square roots of both sides and the inequality $\sqrt{a^2 + b^2 + c^2} \le a + b + c$ give 
		\begin{align*}
			X(t_M) \le X(0) + \int_0^{t_M} F(s)\,ds + \frac 1{2\sqrt{2}} G(t_M) .
		\end{align*}
		Finally, \eqref{eq:integral-estimate} follows because $X(t) \le X(t_M)$, and $\int_0^t F(s)\,ds$, $G(t)$ are non-decreasing in $t$. 	
	We remark that the projections $(\Pi_{\sigma}, \Pi_{\vect{v}_{s}})$ and $(\Pi_{\vect{v}_{f}}, \Pi_{p})$ are independent of the time variable $t$, so the time derivative commute with these projection operators. 
	From the approximation properties \eqref{eq:elasticity-interpolation-estimate} and \eqref{eq:darcy-interpolation-estimate}, 
	\begin{align*}
		F(s) &\le C_F h^m | \dot{\sigma}(s), {\dot{\vect{v}}_{f}}(s), \dot{p}(s), \dot{\vect{v}}_{s}(s) |_{m} ,
		\\
		G(t) &\le C_G h^m \left[\int_0^t | \vect{v}_{f}(s), \sigma(s), \vect{v}_{s}(s)|_{m}^2 \,ds \right]^{\frac 12}
	\end{align*}
	for $1 \le m \le k+1$ where $C_{F}$ has the factor $\max\{C_{\Ael}, \|s_0\|_{L^{\infty}}, \|\rho_0^{-1} \rho_{ij}\|_{L^{\infty}}, 1\le i, j \le 2 \}$ and $C_{G}$ depends on $\|\kappa^{-1} \eta \|_{L^{\infty}}$. By \eqref{eq:integral-estimate}, we can obtain
	\begin{align}
		\label{eq:semidiscrete-eh-error-estimate}
		&\| e_{\sigma}^{h} (t) + \alpha e_{p}^{h}(t)\Id \|_{\Ael} + \|e_{p}^{h}(t)\|_{s_0} + \rho_0 (\|e_{\vect{v}_{s}}^{h}(t)\|_{L^2} + \|e_{\vect{v}_{f}}^{h}(t)\|_{L^2} 
		\\
		\notag
		&\le \| e_{\sigma}^{h} (0) + \alpha e_{p}^{h}(0)\Id \|_{\Ael} + \|e_{p}^{h}(0)\|_{s_0} + \rho_0 (\|e_{\vect{v}_{s}}^{h}(0)\|_{L^2} + \|e_{\vect{v}_{f}}^{h}(0)\|_{L^2} )
		\\ 
		\notag
		&\quad + C_{F} h^m \int_0^t (| \dot{\sigma}(s), {\dot{\vect{v}}_{f}}(s), \dot{p}(s), \dot{\vect{v}}_{s}(s) |_{m}) \,ds + C_G h^m \left[\int_0^t | \vect{v}_{f}(s), \sigma(s), \vect{v}_{s}(s)|_{m}^2 \,ds \right]^{\frac 12} .
	\end{align}
	By the triangle inequality and \eqref{eq:initial-data-approximation1}, \eqref{eq:initial-data-approximation2}, 
	\begin{multline*}
		\| e_{\sigma}^{h} (0) + \alpha e_{p}^{h}(0)\Id \|_{\Ael} + \|e_{p}^{h}(0)\|_{s_0} + \rho_0 (\|e_{\vect{v}_{s}}^{h}(0)\|_{L^2} + \|e_{\vect{v}_{f}}^{h}(0)\|_{L^2} )
		\\
		\le C h^m (| \sigma(0), \vect{v}_{f}(0), p(0), \vect{v}_{s}(0)|_{m})  .
	\end{multline*}
	Applying this to  \eqref{eq:semidiscrete-eh-error-estimate}, the triangle inequality and \eqref{eq:darcy-interpolation-estimate}, \eqref{eq:elasticity-interpolation-estimate} give \eqref{eq:semidiscrete-error-estimate}, then 
	\begin{align*}
		\| e_{\sigma} (t) \|_{\Ael} \le \| e_{\sigma} (t) + \alpha e_{p}(t)\Id \|_{\Ael} + \| \alpha e_{p}(t)\Id \|_{\Ael} \le \| e_{\sigma} (t) + \alpha e_{p}(t)\Id \|_{\Ael} + C_s\| e_{p}(t) \|_{s_0},
	\end{align*}
	so an estimate of $\| e_{\sigma} (t) \|_{\Ael}$ with the same bound in \eqref{eq:semidiscrete-eh-error-estimate} up to $C_s$. 
\end{proof}

\section{Fully Discrete Error Analysis}
\label{sec:fully-discrete-error-analysis}

{
In this section, we extend our analysis to the fully discrete setting by applying the Crank--Nicolson time-stepping scheme to the semidiscrete HDG formulation. We begin in Section \ref{ss:fully_discrete_scheme} by introducing the necessary temporal notation, detailing the fully discrete scheme, and establishing its well-posedness. Next, Section \ref{ss:fully_discrete_result} focuses on systematically deriving the fully discrete error equations. Finally, by employing an auxiliary discrete inequality, we present in Theorem \ref{thm:discrete-error-estimates} the main theoretical result of this paper: the optimal \textit{a priori} error estimates for the fully discrete approximation.
}

{
Before detailing the fully discrete scheme, it is important to justify the choice of the time integration method. Poroelastic wave equations are known to be a highly stiff system, primarily due to the strong solid-fluid friction term and the large contrast in wave speeds, which induces the highly dissipative nature of Biot's slow P-wave. 
}

{
If an explicit time-stepping scheme were employed, the Courant-Friedrichs-Lewy (CFL) stability condition would impose severe and prohibitively small time step restrictions, completely dictated by the stiffest component of the system rather than accuracy requirements. To overcome this computational bottleneck, we employ the Crank-Nicolson method. As an implicit scheme, it provides unconditional stability, allowing the time step size $\Delta t$ to be chosen based solely on the desired accuracy to resolve the wavefield. Furthermore, the Crank-Nicolson scheme is second-order accurate in time and non-dissipative, meaning it preserves the discrete energy of the system without introducing spurious numerical damping, a crucial property for accurate long-term simulations of wave propagation.
}
\subsection{Fully discrete scheme}\label{ss:fully_discrete_scheme}
Let $\Delta t>0$ be the time step size and $N$ be a positive integer. We define the intermediate time steps $t_i = i \Delta t$ for a nonnegative integer $0\le i\le N$. If a function $f$ is defined for all $t\in[0,N\Delta t]$, then we denote $f^i:= f(t_i)$ for $t_i \le N\Delta t$. In the same way, $\sigma^{i}, \vect{v}_{s}^{i}, \widehat{\vect{v}}_{s}^{i}, \vect{v}_{f}^{i}, p^{i}, \widehat{p}^{i}$ are $\sigma(t_i), \vect{v}_{s}(t_i), \widehat{\vect{v}}_{s}(t_i), \vect{v}_{f}(t_i), p(t_i), \widehat{p}(t_i)$. In addition, $\dot{\xi}^i$ is $\dot{\xi}(t_i)$ for $\xi = \sigma, \vect{v}_{s}, \widehat{\vect{v}}_{s}, \vect{v}_{f}, p, \widehat{p}$. 
{However, $\sigma_h^{i}, \vect{v}_{s,h}^{i}, \widehat{\vect{v}}_{s,h}^{i}, \vect{v}_{f,h}^{i}, p_h^{i}, \widehat{p}_h^{i}$ are {\it not} the evaluation of the semidiscrete solutions in \eqref{eq:semidiscrete-eqs} at $t=t_i$ but the fully discrete solution of the variables for the unknowns $\sigma, \vect{v}_{s}, \widehat{\vect{v}}_{s}, \vect{v}_{f}, p, \widehat{p}$ at the $i$-th time step which will be defined later in our fully discrete scheme.}
Once a sequence $\{g^i\}_{i=0}^{N}$ for a positive integer $N$ is defined, we define 
\begin{align}
	\label{time-quotient}
	d_t g^{i+1} := \frac{g^{i+1} - g^{i}}{\Delta t} \quad \text{and} \quad {g}^{i+\frac 12 } := \frac{g^i + g^{i+1}} 2 \qquad \text{for } 0 \le i \le N-1.
\end{align}
%


Given $(\sigma_h^i, \vect{v}_{s,h}^i, \widehat{\vect{v}}_{s,h}^i, \vect{v}_{f,h}^i, p_h^i, \widehat{p}_h^i)$ and $\vect{f}^i$, $\vect{f}^{i+1}$, $g^i$, $g^{i+1}$, the Crank--Nicolson scheme seeks 
\begin{align*}
	(\sigma_h^{i+1}, \vect{v}_{s,h}^{i+1}, \widehat{\vect{v}}_{s,h}^{i+1}, \vect{v}_{f,h}^{i+1}, p_h^{i+1}, \widehat{p}_h^{i+1}) \in \Sigma_{h} \times V_{s,h} \times V_{f,h} \times Q_{h} \times \widehat{V}_{s,h}\times \widehat{Q}_{h}
\end{align*}
satisfying the following equations:  
\begin{subequations}
	\label{eq:fully-discrete-cn-eqs}
	\begin{align}
		\label{eq:fully-discrete-cn-eq1}
		\Thprod{\Ael(d_t \sigma_{h}^{i+1} + \alpha d_t{p}_{h}^{i+1} \Id)}{r} +\Thprod{\divergence r}{\vect{v}_{s,h}^{i+\frac 12}} -\pThprod{\widehat{\vect{v}}_{s,h}^{i+\frac 12}}{r \vect{n}} &= 0 ,
		\\ 
		\label{eq:fully-discrete-cn-eq2}		\Thprod{\rho_{11}d_t{\vect{v}}_{s,h}^{i+1}+\rho_{12}d_t{\vect{v}}_{f,h}^{i+1}}{\vect{w}_{s}} + \Thprod{\sigma_{h}^{i+\frac 12}}{\grad \vect{w}_{s}} &
        \\
        \notag 
        -\pThprod{\widehat{\sigma_{h} \vect{n}}^{i+\frac 12} }{\vect{w}_{s}} 
		&= {\Thprod{\vect{f}^{i+1/2}}{\vect{w}_{s}}}, 
		\\
		\label{eq:fully-discrete-cn-eq3}
		\Thprod{\rho_{12}d_t{\vect{v}}_{s,h}^{i+1}+\rho_{22}d_t{\vect{v}}_{f,h}^{i+1}}{\vect{w}_{f}} + \Thprod{\frac{\eta}{\kappa} \vect{v}_{f,h}^{i+\frac 12}}{\vect{w}_{f}} 
		&
        \\
        \notag 
        - \Thprod{p_{h}^{i+\frac 12} }{\divergence\vect{w}_{f}} +\pThprod{\widehat{p}_{h}^{i+\frac 12}}{\vect{w}_{f}\cdot\vect{n}} &= 0, 
		\\
		\label{eq:fully-discrete-cn-eq4}
		\Thprod{s_{0}d_t{p}_{h}^{i+1}}{q} + \Thprod{\Ael(d_t{\sigma}_{h}^{i+1}+\alpha d_t{p}_{h}^{i+1}\Id)}{\alpha q\Id} &
        \\
        \notag 
        - \Thprod{\vect{v}_{f,h}^{i+\frac 12}}{\grad q} 
		+\pThprod{\widehat{\vect{v}_{f,h}\cdot \vect{n}}^{i+\frac 12} }{q}
		&={\Thprod{g^{i+1/2} }{q}}, 
		\\
		\label{eq:fully-discrete-cn-eq5}
		\pThprod{\widehat{\sigma_h\vect{n}}^{i+\frac 12}  }{\widehat{\vect{w}}_{s}}
		=
		\pThprod{\sigma_h^{i+\frac 12} \vect{n} - \tau_{s} ({P_{\widehat{V}_s}} \vect{v}_{s,h}^{i+\frac 12} - \widehat{\vect{v}}_{s,h}^{i+\frac 12})}{\widehat{\vect{w}}_{s}} &= 0, 
		\\
		\label{eq:fully-discrete-cn-eq6}
		\pThprod{\widehat{\vect{v}_{f,h} \cdot \vect{n}}^{i+\frac 12} }{\widehat{q}}
		=
		\pThprod{\vect{v}_{f,h}^{i+\frac 12}\cdot \vect{n} + \tau_f (p_h^{i+\frac 12} - \widehat{p}_h^{i+\frac 12})}{\widehat{q}} &= 0
	\end{align}
\end{subequations}
for any $(r, \vect{w}_s, \vect{w}_f, q, \widehat{\vect{w}}_s, \widehat{q}) \in \Sigma_{h} \times V_{s,h} \times V_{f,h} \times Q_{h} \times \widehat{V}_{s,h}\times \widehat{Q}_{h}$. In the following lemma, we prove the well-posedness of the fully discrete scheme.
\begin{lemma}
	The system \eqref{eq:fully-discrete-cn-eqs} is well-posed.
\end{lemma}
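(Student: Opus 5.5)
Since \eqref{eq:fully-discrete-cn-eqs} is a square linear system for the unknowns at level $i+1$ in the finite-dimensional space $\Sigma_{h} \times V_{s,h} \times V_{f,h} \times Q_{h} \times \widehat{V}_{s,h}\times \widehat{Q}_{h}$, it suffices to prove uniqueness: if the data at level $i$ and the right-hand sides $\vect{f}^{i+1/2}, g^{i+1/2}$ vanish, then the solution at level $i+1$ vanishes. In that case $d_t \xi^{i+1} = \xi^{i+1}/\Delta t$ and $\xi^{i+\frac12} = \xi^{i+1}/2$ for every variable. Multiplying by $2$, we obtain a stationary HDG system for $(\sigma, \vect{v}_s, \vect{v}_f, p, \widehat{\vect{v}}_s, \widehat{p}) := (\sigma_h^{i+1},\dots,\widehat{p}_h^{i+1})$. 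It has the same structure as \eqref{eq:general-semidiscrete-eqs}, except that the time derivatives are replaced by $(2/\Delta t)$ times the unknowns and $F_j=0$. The last equation has the sign convention of \eqref{eq:fully-discrete-cn-eq6}; this is harmless because its right-hand side is zero.

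We then test with $r=\sigma$, $\vect{w}_s=\vect{v}_s$, $\vect{w}_f=\vect{v}_f$, $q=p$, $\widehat{\vect{w}}_s=-\widehat{\vect{v}}_s$, $\widehat{q}=-\widehat{p}$ and add, exactly as in the proof of Lemma~\ref{lemma:energy}. Integration by parts on each $K$ cancels the terms $\Thprod{\divergence \sigma}{\vect{v}_s}$ and $\Thprod{\sigma}{\grad \vect{v}_s}$, as well as the analogous Darcy terms. Using $\pThprod{\tau_s(P_{\widehat{V}_s}\vect{v}_s-\widehat{\vect{v}}_s)}{\vect{v}_s} = \pThprod{\tau_s(P_{\widehat{V}_s}\vect{v}_s-\widehat{\vect{v}}_s)}{P_{\widehat{V}_s}\vect{v}_s}$, which holds because $\tau_s$ is facewise constant by (A1), the boundary terms collapse to the stabilization quadratic forms. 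The result is
\begin{align*}
\frac{2}{\Delta t}\Big( \Thprod{\Ael(\sigma+\alpha p\Id)}{\sigma+\alpha p \Id} + \Thprod{s_0 p}{p} + \Thprod{\rho_{11}\vect{v}_s}{\vect{v}_s} + 2\Thprod{\rho_{12}\vect{v}_f}{\vect{v}_s} + \Thprod{\rho_{22}\vect{v}_f}{\vect{v}_f}\Big)
\\
+ \Thprod{\tfrac{\eta}{\kappa}\vect{v}_f}{\vect{v}_f} + \pThprod{\tau_s(P_{\widehat{V}_s}\vect{v}_s-\widehat{\vect{v}}_s)}{P_{\widehat{V}_s}\vect{v}_s-\widehat{\vect{v}}_s} + \pThprod{\tau_f(p-\widehat{p})}{p-\widehat{p}} = 0 .
\end{align*}
All terms are nonnegative. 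The mass term is positive definite by \eqref{eq:rho-coercivity}, so $\vect{v}_s=\vect{v}_f=0$. Positivity \eqref{eq:A-positive} gives $\sigma + \alpha p\Id = 0$. The stabilization terms give $P_{\widehat{V}_s}\vect{v}_s = \widehat{\vect{v}}_s$ wherever $\tau_s>0$, which by (A3) is all of $\partial\Th$; hence $\widehat{\vect{v}}_s=0$. They also give $p=\widehat{p}$ on faces where $\tau_f>0$.

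The main obstacle is recovering $p$, and hence $\sigma$, since $s_0$ may vanish. We go back to \eqref{eq:fully-discrete-cn-eq3} with $\vect{v}_f=\vect{v}_s=0$, which gives $-\Thprod{p}{\divergence \vect{w}_f} + \pThprod{\widehat{p}}{\vect{w}_f\cdot\vect{n}}=0$ for all $\vect{w}_f\in V_{f,h}$. Integrating by parts yields $\Thprod{\grad p}{\vect{w}_f} - \pThprod{p-\widehat{p}}{\vect{w}_f\cdot\vect{n}}=0$. Choosing $\vect{w}_f=\grad p \in P_{k-1}$ elementwise is not sufficient by itself, so we instead use the standard LDG-H argument. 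On each $K$, select $\vect{w}_f \in P_k(K;\mathbb{V})$ with $\vect{w}_f\cdot\vect{n} = p-\widehat{p}$ on the faces where $\tau_f=0$ and $\vect{w}_f\cdot\vect{n}=0$ on the one face where $\tau_f>0$ and $p=\widehat{p}$ already holds, and with interior moments chosen so that $\Thprod{\grad p}{\vect{w}_f}$ vanishes. Such a $\vect{w}_f$ exists by the unisolvence of BDM/RT-type degrees of freedom on a simplex. This gives $p=\widehat{p}$ on all of $\partial K$, after which $\vect{w}_f=\grad p$ yields $\grad p=0$. Hence $p$ is elementwise constant and equal to its single-valued trace $\widehat{p}$, so $p$ is globally constant. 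Since $\widehat{p}=0$ on $\Gamma_p\neq\emptyset$, we conclude $p=\widehat{p}=0$.

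Finally, $\sigma = -\alpha p\Id = 0$. The unknowns at level $i+1$ are therefore all zero, so the square system is invertible and \eqref{eq:fully-discrete-cn-eqs} is well-posed.
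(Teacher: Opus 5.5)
Up to $\vect{v}_s=\vect{v}_f=0$, $\sigma+\alpha p\Id=0$ and $\widehat{\vect{v}}_s=0$, your argument is the paper's. The paper then reads off $p_h^{i+1}=\widehat p_h^{i+1}$ on all of $\partial\Th$ directly from the $\tau_f$-term, tacitly taking $\tau_f>0$ on every face, and finishes with $\vect{w}_f=\grad p$. You are right that (A1) only guarantees one such face per element. However, your repair of this step does not work.

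First, the $\vect{w}_f$ you ask for cannot come from BDM unisolvence. The interior degrees of freedom of $P_k(K;\mathbb{V})$ are moments against a N\'ed\'elec space of index $k-1$. That space contains $\grad P_{k-1}(K)$ but not $\grad p$ when $p$ has a nonzero homogeneous part of degree $k$. For $k=1$ there are no interior degrees of freedom at all.

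Second, and worse, the requirement is circular. Apply the identity $\Kprod{\grad p}{\vect{w}_f}=\pKprod{p-\widehat p}{\vect{w}_f\cdot\vect{n}}$ to any $\vect{w}_f$ with $\vect{w}_f\cdot\vect{n}=p-\widehat p$. It gives $\Kprod{\grad p}{\vect{w}_f}=\|p-\widehat p\|_{\partial K}^2$, whatever the interior moments are. Equivalently, $\grad p$ is orthogonal to every field with zero normal trace. So your $\vect{w}_f$ exists only if $p=\widehat p$ on $\partial K$ already.

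A symptom of this is that your Darcy step never genuinely uses $\tau_f>0$ on that face. Yet with $\tau_f\equiv 0$, any $p$ with $p|_K\perp P_{k-1}(K)$ and $\widehat p=0$ satisfies the third equation with $\vect{v}_s=\vect{v}_f=0$, while $p\neq\widehat p$ on $\partial K$.

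The idea can be rescued by working with $\pi_{k-1}p$, the $L^2(K)$-projection of $p$ onto $P_{k-1}(K)$.

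\begin{itemize}
\item Since $\divergence\vect{w}_f\in P_{k-1}(K)$, the third equation gives $\Kprod{\grad\pi_{k-1}p}{\vect{w}_f}=\pKprod{\pi_{k-1}p-\widehat p}{\vect{w}_f\cdot\vect{n}}$ for all $\vect{w}_f\in P_k(K;\mathbb{V})$. Now $\grad\pi_{k-1}p\in\grad P_{k-1}(K)$ is controlled by the interior moments.
\item Choose $\vect{w}_f$ with normal trace $\pi_{k-1}p-\widehat p$ and orthogonal to $\grad P_{k-1}(K)$. This gives $\widehat p=\pi_{k-1}p$ on $\partial K$.
\item Then $\vect{w}_f=\grad\pi_{k-1}p$ gives $\grad\pi_{k-1}p=0$. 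So $\widehat p|_{\partial K}=\pi_{k-1}p=c_K$ is a constant.
\item Only now does the stabilized face enter. Let $F^*\subset\partial K$ be a face with $\tau_f|_{F^*}>0$, and $\lambda_{F^*}$ the barycentric coordinate vanishing on it. Then $p-c_K=p-\pi_{k-1}p$ is orthogonal to $P_{k-1}(K)$ and vanishes on $F^*$. Hence $p-c_K=\lambda_{F^*}r$ with $r\in P_{k-1}(K)$, and $\Kprod{\lambda_{F^*}r}{r}=0$ forces $r=0$.
\end{itemize}

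Thus $p=\widehat p=c_K$ on each $K$, and your concluding argument applies. This is the same mechanism that makes the projection $(\Pi_{\vect{v}_f,K},\Pi_{p,K})$ well defined under $\tau_{f,K}^{\max}>0$.
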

\begin{proof}
	We first note that \eqref{eq:fully-discrete-cn-eqs} is a square linear system; thus, the uniqueness of solutions implies the existence of solutions. 
	Suppose then that $(\sigma_h^i, \vect{v}_{s,h}^i, \widehat{\vect{v}}_{s,h}^i, \vect{v}_{f,h}^i, p_h^i, \widehat{p}_h^i)$ and $g^i$, $g^{i+1}$, $\vect{f}^{i}$, $\vect{f}^{i+1}$ vanish. Then, \eqref{eq:fully-discrete-cn-eqs} reduces to 
	%
		\begin{align*}
			\Thprod{\Ael(\sigma_{h}^{i+1} + \alpha {p}_{h}^{i+1} \Id)}{r} +\frac{\Delta t}{2} (\Thprod{\divergence r}{\vect{v}_{s,h}^{i+1}} -\pThprod{\widehat{\vect{v}}_{s,h}^{i+1}}{r \vect{n}})& = 0 ,
			\\ 
			\Thprod{\rho_{11}{\vect{v}}_{s,h}^{i+1} +\rho_{12}{\vect{v}}_{f,h}^{i+1}}{\vect{w}_{s}} + \frac{\Delta t}{2}\left(\Thprod{\sigma_{h}^{i+1}}{\grad \vect{w}_{s}} 
			- \pThprod{\widehat{\sigma_{h} \vect{n}}^{i+1} }{\vect{w}_{s}}\right)& = 0, 
			\\
			\Thprod{\rho_{12}{\vect{v}}_{s,h}^{i+1}+\rho_{22}{\vect{v}}_{f,h}^{i+1}}{\vect{w}_{f}} + \frac{\Delta t}{2}\left(\Thprod{\frac{\eta}{\kappa} \vect{v}_{f,h}^{i+1}}{\vect{w}_{f}} 
			- \Thprod{p_{h}^{i+1} }{\divergence\vect{w}_{f}} +\pThprod{\widehat{p}_{h}^{i+1}}{\vect{w}_{f}\cdot\vect{n}}\right) &= 0, 
			\\
			\Thprod{s_{0}{p}_{h}^{i+1}}{q} + \Thprod{\Ael({\sigma}_{h}^{i+1}+\alpha {p}_{h}^{i+1}\Id)}{\alpha q\Id} + \frac{\Delta t}{2} \left(-\Thprod{\vect{v}_{f,h}^{i+1}}{\grad q} 
			+ \pThprod{\widehat{\vect{v}_{f,h}\cdot \vect{n}}^{i+1}}{q}\right) &= 0,
			\\
			\frac{\Delta t}{2}\pThprod{\widehat{\sigma_h \vect{n}}^{i+1} }{\widehat{\vect{w}}_{s}}=
			\frac{\Delta t}{2}\pThprod{\sigma_h^{i+1} \vect{n} - \tau_{s} (P_{\widehat{V}_s} \vect{v}_{s,h}^{i+1} - \widehat{\vect{v}}_{s,h}^{i+1})}{\widehat{\vect{w}}_{s}} &= 0, 
			\\
			\frac{\Delta t}{2}\pThprod{\widehat{\vect{v}_{f,h}\cdot \vect{n}}^{i+1})}{\widehat{q}}=
			\frac{\Delta t}{2}\pThprod{\vect{v}_{f,h}^{i+1}\cdot \vect{n} + \tau_f (p_h^{i+1} - \widehat{p}_h^{i+1})}{\widehat{q}} &= 0 .
		\end{align*}
	%
	By taking the test functions $r = \sigma_h^{i+1}$, $\vect{w}_{s} = \vect{v}_{s,h}^{i+1}$, $\vect{w}_{f} = \vect{v}_{f,h}^{i+1}$, $q = p_h^{i+1}$, $\widehat{\vect{w}}_{s} = \widehat{\vect{v}}_{s,h}^{i+1}$, $\widehat{q}=\widehat{p}_{h}^{i+1}$ in the system above and adding all the equations, we obtain
	\begin{align*}
		\Thprod{\Ael({\sigma}_{h}^{i+1} + \alpha {p}_{h}^{i+1} \Id)}{{\sigma}_{h}^{i+1} + \alpha {p}_{h}^{i+1} \Id)}
        \\
		+ \Thprod{\rho_{11}\vect{v}_{s,h}^{i+1} +\rho_{12}\vect{v}_{f,h}^{i+1}}{\vect{v}_{s,h}^{i+1}} +\Thprod{\rho_{12}\vect{v}_{s,h}^{i+1}+\rho_{22}\vect{v}_{f,h}^{i+1}}{\vect{v}_{f,h}^{i+1}} 
		\\
		+ \Thprod{s_{0}p_h^{i+1}}{p_h^{i+1}} + \frac{\Delta t}{2} \Thprod{\frac{\eta}{\kappa} \vect{v}_{f,h}^{i+1}}{\vect{v}_{f,h}^{i+1}} 
		\\
		+ \frac{\Delta t}{2}  \pThprod{\tau_{s} ({P_{\widehat{V}_s}} \vect{v}_{s,h}^{i+1} - \widehat{\vect{v}}_{s,h}^{i+1})}{P_{\widehat{V}_s}\vect{v}_{s,h}^{i+1}-\widehat{\vect{v}}_{s,h}^{i+1}}
		\\
		+ \frac{\Delta t}{2} \pThprod{\tau_{f} (p_{h}^{i+1} - \widehat{p}_{h}^{i+1})}{p_h^{i+1}-\widehat{p}_h^{i+1}}&=0 . 
	\end{align*}
	Then, using the positivity of $\Ael$ assumption \eqref{eq:A-positive},  and the assumption on the density coefficientes \eqref{eq:rho-coercivity}, we obtain that 
	$\vect{v}_{s,h}^{i+1}= \vect{v}_{f,h}^{i+1} =   0$, $ (\sigma_h^{i+1} + \alpha p_h^{i+1}) = 0, $ and 
	\begin{align*}
		\quad \pThprod{\tau_{s} ({P_{\widehat{V}_s}} \vect{v}_{s,h}^{i+1} - \widehat{\vect{v}}_{s,h}^{i+1})}{P_{\widehat{V}_s}\vect{v}_{s,h}^{i+1}-\widehat{\vect{v}}_{s,h}^{i+1}} = 0, \quad 
		\pThprod{\tau_{f} (p_{h}^{i+1} - \widehat{p}_{h}^{i+1})}{p_h^{i+1}-\widehat{p}_h^{i+1}} = 0. 
	\end{align*}
	Therefore, $\widehat{\vect{v}}_{s,h}^{i+1} = 0$ and $p_h^{i+1} = \widehat{p}_h^{i+1}$ on $\partial \mathcal{T}_h$. Moreover, replacing $\vect{v}_{s,h}^{i+1}= \vect{v}_{f,h}^{i+1} =   0$, and $p_h^{i+1} = \widehat{p}_h^{i+1}$ in the third equation of the system above, and using integration by parts gives $\grad p_h^{i+1} = 0$, so $p_h^{i+1}$ is constant on $\Omega$. Since $p_h^{i+1} = \widehat{p}_h^{i+1}$ on $\partial \mathcal{T}_h$ and $\widehat{p}_h^{i+1}|_{\Gamma_p} = 0$, $p_h^{i+1} = 0$ on $\Omega$. Then, $\sigma_h^{i+1} = 0$ and $\widehat{p}_h^{i+1} =0$. Therefore, \eqref{eq:fully-discrete-cn-eqs} is well-posed.
\end{proof}

%
\subsection{Main result: fully discrete error estimates}\label{ss:fully_discrete_result}
In this section, we introduce the main result, the error estimates of the fully discrete scheme \eqref{eq:fully-discrete-cn-eqs}. As in \eqref{eq:semidiscrete-error-decomposition-eqs} let 
%
\begin{align*}
	e_{\xi}^i &:= \xi^i - \xi_h^i & &= (\xi^i - \Pi_{\xi} \xi^i) - (\xi_h^i - \Pi_{\xi} \xi^i) & &=: e_{\xi}^{I,i} - e_{\xi}^{h,i}  \quad \text{for } \xi= \sigma, \vect{v}_s, \vect{v}_f, p,
	\\
	\widehat{e}_{\vect{v}_{s}}^i &= \widehat{\vect{v}}_{s}^i - \widehat{\vect{v}}_{s,h}^i & &= (\widehat{\vect{v}}_{s}^i - {P_{\widehat{V}_{s}}} \widehat{\vect{v}}_{s}^i) - (\widehat{\vect{v}}_{s,h}^i - {P_{\widehat{V}_{s}}} \widehat{\vect{v}}_{s}^i) & &=: \widehat{e}_{\vect{v}_{s}}^{I,i} - \widehat{e}_{\vect{v}_{s}}^{h,i} , 
	\\
	\widehat{e}_{p}^i &:= \widehat{p}^i - \widehat{p}_{h}^i & &= (\widehat{p}^i - {P_{\widehat{Q}}} \widehat{p}^i) - (\widehat{p}_{h}^i - {P_{\widehat{Q}}} \widehat{p}^i) & &=: \widehat{e}_{p}^{I,i} - \widehat{e}_{p}^{h,i} .
\end{align*}
%
%
\begin{lemma}[Fully discrete error equations] 
Consider the solution of the discrete scheme \eqref{eq:fully-discrete-cn-eqs}. Then, the projected errors $(e_{\sigma}^{h,i+1}, e_{\vect{v}_s}^{h,i+1}, e_{\vect{v}_f}^{h,i+1}, e_{p}^{h,i+1}, \widehat{e}_{\vect{v}_s}^{h,i+1}, \widehat{e}_{p}^{h,i+1})$ satisfy the following identity 
\begin{subequations}
	\label{eq:fully-discrete-reduced-error-eqs}
	\begin{align}
		\label{eq:fully-discrete-reduced-error-eq1}
		\Thprod{\Ael(d_t e_{\sigma}^{h,i+1} + \alpha d_te_{p}^{h,i+1} \Id)}{r} +\Thprod{\divergence r}{e_{\vect{v}_{s}}^{h,i+\frac 12}} &
        \\
        \notag 
        -\pThprod{\widehat{e}_{\vect{v}_{s}}^{h,i+\frac 12}}{r \vect{n}} 
		&= R_1^i(r) ,
		\\ 
		\label{eq:fully-discrete-reduced-error-eq2}
		\Thprod{\rho_{11}d_t e_{\vect{v}_{s}}^{h,i+1}+\rho_{12}d_t e_{\vect{v}_{f}}^{h,i+1}}{\vect{w}_{s}} + \Thprod{e_{\sigma}^{h,i+\frac 12}}{\grad \vect{w}_{s}} - \pThprod{e_{\sigma}^{h,i+\frac 12} \vect{n} }{\vect{w}_{s}}&
		\\
		\notag
		\qquad + \Thprod{e_{\sigma}^{I,i+\frac 12}}{\grad \vect{w}_s} - \pThprod{e_{\sigma}^{I,i+\frac 12} \vect{n} - \tau_s (\Pi_{\vect{v}_s} \vect{v}_{s}^{i+\frac 12} - {P_{\widehat{V}_s}}\widehat{\vect{v}}_{s}^{i+\frac 12})}{\vect{w}_s} &= {R_2^i(\vect{w}_s)}, 
		\\
		\label{eq:fully-discrete-reduced-error-eq3}
		\Thprod{\rho_{12}d_t e_{\vect{v}_{s}}^{h,i+1}+\rho_{22}d_t e_{\vect{v}_{f}}^{h,i+1}}{\vect{w}_f} + \Thprod{\frac{\eta}{\kappa} e_{\vect{v}_{f}}^{h,i+\frac 12}}{\vect{w}_{f}} - \Thprod{e_p^{h,i+\frac 12} }{\divergence\vect{w}_{f}} &
		\\
		\notag
		+\pThprod{\widehat{e}_p^{h,i+\frac 12}}{\vect{w}_{f}\cdot\vect{n}} &= R_3^i(\vect{v}_f) , 
		\\
		\label{eq:fully-discrete-reduced-error-eq4}
		\Thprod{s_{0}d_t e_{p}^{h,i+1}}{q} + \Thprod{\Ael(d_t e_{\sigma}^{h,i+1}+\alpha d_t e_{p}^{h,i+1}\Id)}{\alpha q\Id} - \Thprod{e_{\vect{v}_{f}}^{h,i+\frac 12}}{\grad q} &
		\\
		\notag
		\quad+ \pThprod{e_{\vect{v}_{f}}^{h,i+\frac 12}\cdot \vect{n} + \tau_{f} (e_p^{h,i+\frac 12} - \widehat{e}_p^{h,i+\frac 12})}{q}  
		&= R_4^i(q) ,
		\\
		\label{eq:fully-discrete-reduced-error-eq5}
		\pThprod{e_{\sigma}^{h,i+\frac 12} \vect{n} - \tau_{s} ({P_{\widehat{V}_s}} e_{\vect{v}_{s}}^{h,i+\frac 12} - \widehat{e}_{\vect{v}_{s}}^{h,i+\frac 12})}{\widehat{\vect{w}}_{s}} &
        \\
        \notag = \pThprod{e_{\sigma}^{I,i+\frac 12} \vect{n} - \tau_{s} ({P_{\widehat{V}_s}} e_{\vect{v}_{s}}^{I,i+\frac 12} - \widehat{e}_{\vect{v}_{s}}^{I,i+\frac 12})}{\widehat{\vect{w}}_{s}}&, 
		\\
		\label{eq:fully-discrete-reduced-error-eq6}
		\pThprod{e_{\vect{v}_{f}}^{h,i+\frac 12} + \tau_f (e_p^{h,i+\frac 12} - \widehat{e}_p^{h,i+\frac 12})}{\widehat{q}} &= 0 .
	\end{align}
\end{subequations}
for any 
$(r, \vect{w}_s, \vect{w}_f, q, \widehat{\vect{w}}_s, \widehat{q}) \in \Sigma_{h} \times V_{s,h} \times V_{f,h} \times Q_{h} \times \widehat{V}_{s,h}\times \widehat{Q}_{h}$ where
\begin{align*}
	R_1^{i}(r)  = & \Thprod{\Ael(d_t e_{\sigma}^{I,i+1} + \alpha d_te_{p}^{I,i+1} \Id)}{r} - \Thprod{\Ael(d_t \sigma^{i+1} - \dot{\sigma}^{i+\frac 12} + \alpha (d_t{p}^{i+1} - \dot{p}^{i+\frac 12})\Id)}{r},
	\\
	R_2^{i}(\vect{w}_s)  = &\Thprod{\rho_{11}d_t e_{\vect{v}_{s}}^{I,i+1}+\rho_{12}d_t e_{\vect{v}_{f}}^{I,i+1}}{\vect{w}_{s}} 
    \\
    & - \Thprod{\rho_{11}(d_t{\vect{v}}_{s}^{i+1}-\dot{\vect{v}}_s^{i+\frac 12}) +\rho_{12}(d_t{\vect{v}}_{f}^{i+1}-\dot{\vect{v}}_f^{i+\frac 12}) }{\vect{w}_{s}},
	\\
	R_3^{i}(\vect{v}_f)  = &\Thprod{\rho_{12}d_t e_{\vect{v}_{s}}^{I,i+1}+\rho_{22}d_t e_{\vect{v}_{f}}^{I,i+1}}{\vect{w}_f} + \Thprod{\frac{\eta}{\kappa} e_{\vect{v}_{f}}^{I,i+\frac 12}}{\vect{w}_{f}} 
	\\
	& - \Thprod{\rho_{12}(d_t{\vect{v}}_{s}^{i+1} - \dot{\vect{v}}_{s}^{i+\frac 12})+\rho_{22}(d_t{\vect{v}}_{f}^{i+1} - \dot{\vect{v}}_{f}^{i+\frac 12})}{\vect{w}_{f}},
	\\
	R_4^{i}(q)  = &\Thprod{s_{0}d_t e_{p}^{I,i+1}}{q} + \Thprod{\Ael(d_t e_{\sigma}^{I,i+1}+\alpha d_t e_{p}^{I,i+1}\Id)}{\alpha q\Id} - \Thprod{s_{0}(d_t{p}^{i+1}-\dot{p}^{i+\frac 12})}{q} \\
	& - \Thprod{\Ael(d_t{\sigma}^{i+1}- \dot{\sigma}^{i+\frac 12}+\alpha (d_t{p}^{i+1} - \dot{p}^{i+\frac 12})\Id)}{\alpha q\Id} .
\end{align*}
\end{lemma}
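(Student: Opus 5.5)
The plan follows the proof of Lemma~\ref{eq:interpolationerror_eqns}, adapted to the Crank--Nicolson averaging. First we produce time-discrete equations for the exact solution. Evaluate the consistency identities \eqref{eq:variational-eqs} of Lemma~\ref{lemma:consistency} at $t_i$ and $t_{i+1}$ and average them. This yields equations in which all non-time-derivative terms appear at the level $i+\frac12$, since those terms are linear and time independent in the coefficients. The time-derivative terms appear as averages $\dot{\xi}^{i+\frac12}$. We then write $\dot{\xi}^{i+\frac12} = d_t\xi^{i+1} - (d_t\xi^{i+1} - \dot{\xi}^{i+\frac12})$ for $\xi=\sigma,\vect{v}_s,\vect{v}_f,p$. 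The exact solution thus satisfies the same form as \eqref{eq:fully-discrete-cn-eqs}, with the difference quotients $d_t$, plus the consistency defects $d_t\xi^{i+1}-\dot\xi^{i+\frac12}$ on the right-hand sides. Here $\vect f^{i+1/2}$ and $g^{i+1/2}$ are the averages, matching the data of the scheme.

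Next, subtract \eqref{eq:fully-discrete-cn-eqs} from these equations to obtain the error equations for $e^i_\xi$. The first, third, fourth and sixth error equations have exactly the structure of \eqref{eq:semidiscrete-error-eqs}, with $\dot e$ replaced by $d_t e$, instantaneous quantities replaced by $(\cdot)^{i+\frac12}$, and the defect terms added. We insert the splitting $e=e^I-e^h$ and move the $e^I$ terms to the right. The projection orthogonalities are time independent and commute with $d_t$ and with averaging. So, as in the semidiscrete proof, the following terms vanish: $\Thprod{\divergence r}{e^{I}_{\vect v_s}}$, because $\divergence r\in P_{k-1}$; $\pThprod{\widehat e^I_{\vect v_s}}{r\vect n}$, by the $L^2$ projection; $\Thprod{e^I_p}{\divergence \vect w_f}$; and $\pThprod{\widehat e^I_p}{\vect w_f\cdot\vect n}$. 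Likewise the whole combination $-\Thprod{e^I_{\vect v_f}}{\grad q}+\pThprod{e^I_{\vect v_f}\cdot\vect n+\tau_f(e^I_p-\widehat e^I_p)}{q}$ vanishes, by integration by parts and the definition of $(\Pi_{\vect v_f},\Pi_p)$. This gives the stated $R_1^i,R_3^i,R_4^i$ and \eqref{eq:fully-discrete-reduced-error-eq6}, after sign bookkeeping: the $e^h$ terms stay on the left with sign flipped, so they appear with a plus convention on both sides.

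For \eqref{eq:fully-discrete-reduced-error-eq2} and \eqref{eq:fully-discrete-reduced-error-eq5}, the statement keeps the interpolation flux terms explicitly rather than converting them into $R(\sigma,\vect v_s)$. Here we only expand the stabilization term. For $\vect w_s\in V_{s,h}$ we use $\tau_s(\vect v_s-P_{\widehat V_s}\vect v_{s,h}-\widehat e_{\vect v_s})$ together with $\pThprod{\tau_s P_{\widehat V_s}\cdot}{\vect w_s}$-type identities. Since $\tau_s$ is facewise constant, $\vect v_s$ restricted to faces equals $\widehat{\vect v}_s$. Rewriting $P_{\widehat V_s}\vect v_{s,h} = P_{\widehat V_s}\Pi_{\vect v_s}\vect v_s + P_{\widehat V_s}e^h_{\vect v_s}$ and $\widehat{\vect v}_{s,h}=P_{\widehat V_s}\widehat{\vect v}_s+\widehat e^h_{\vect v_s}$ then produces exactly the grouping $\tau_s(\Pi_{\vect v_s}\vect v_s^{i+\frac12}-P_{\widehat V_s}\widehat{\vect v}_s^{i+\frac12})$ shown.

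I expect this last step to be the main obstacle. The difficulty is tracking where $P_{\widehat V_s}$ may be dropped against $\vect w_s$, which lies only in $P_{k+1}$, not in the trace space. The other difficulty is the consistent sign convention between $e^h$ and $e^I$. Equation \eqref{eq:fully-discrete-reduced-error-eq5} is then immediate from the averaged \eqref{eq:variational-eq5} minus \eqref{eq:fully-discrete-cn-eq5}, with the splitting applied and no projection identities needed.
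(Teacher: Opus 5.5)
You follow the same route as the paper: average the exact relations at $t_i,t_{i+1}$, replace $\dot\xi^{i+\frac12}$ by $d_t\xi^{i+1}$ minus the defect, subtract \eqref{eq:fully-discrete-cn-eqs}, split $e=e^I-e^h$, and remove interpolation terms with the projection properties. For \eqref{eq:fully-discrete-reduced-error-eq1} and \eqref{eq:fully-discrete-reduced-error-eq3}--\eqref{eq:fully-discrete-reduced-error-eq6} your bookkeeping is fine, and more explicit than the paper's.

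The gap is in \eqref{eq:fully-discrete-reduced-error-eq2}, exactly at the step you flag. Note that $\tau_s(\vect{v}_s-P_{\widehat{V}_s}\vect{v}_{s,h}-\widehat{e}_{\vect{v}_s})=-\tau_s(P_{\widehat{V}_s}\vect{v}_{s,h}-\widehat{\vect{v}}_{s,h})$ on faces. Your own rewriting then gives
\[
-\tau_s\bigl(P_{\widehat{V}_s}\vect{v}_{s,h}-\widehat{\vect{v}}_{s,h}\bigr)
=-\tau_s P_{\widehat{V}_s}\bigl(\Pi_{\vect{v}_s}\vect{v}_s-\widehat{\vect{v}}_s\bigr)
-\tau_s\bigl(P_{\widehat{V}_s}e^h_{\vect{v}_s}-\widehat{e}^h_{\vect{v}_s}\bigr),
\]
and neither piece becomes what is printed:
\begin{itemize}
\item $P_{\widehat{V}_s}$ cannot be removed from $\Pi_{\vect{v}_s}\vect{v}_s$, because $\vect{w}_s|_F\in P_{k+1}(F)$.
\item The second piece is a stabilization term in the projected errors and must stay on the left.
\item Since $e_\sigma=e^I_\sigma-e^h_\sigma$, the $e^I_\sigma$-terms enter the left side with the sign opposite to the $e^h_\sigma$-terms.
\end{itemize}
So your claim that this ``produces exactly the grouping shown'' is false. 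The printed \eqref{eq:fully-discrete-reduced-error-eq2} has the same sign pattern on both $\sigma$-groups, no $\tau_s$-term in $e^h$, and the term $\tau_s(\Pi_{\vect{v}_s}\vect{v}_s-P_{\widehat{V}_s}\widehat{\vect{v}}_s)$. It is a misprint, not an identity the projected errors satisfy, so no argument can reach it.

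What your computation actually yields is the following. It is also what the proof of Theorem~\ref{thm:discrete-error-estimates} really uses: the $\tau_s$-square on the left and $2\pThprod{R(\sigma^{i+\frac12},\vect{v}_s^{i+\frac12})}{P_{\widehat{V}_s}e^{h,i+\frac12}_{\vect{v}_s}-\widehat{e}^{h,i+\frac12}_{\vect{v}_s}}$ on the right.
\begin{align*}
&\Thprod{\rho_{11}d_t e_{\vect{v}_{s}}^{h,i+1}+\rho_{12}d_t e_{\vect{v}_{f}}^{h,i+1}}{\vect{w}_{s}} + \Thprod{e_{\sigma}^{h,i+\frac 12}}{\grad \vect{w}_{s}}
\\
&\quad - \pThprod{e_{\sigma}^{h,i+\frac 12}\vect{n} - \tau_s(P_{\widehat{V}_s} e_{\vect{v}_s}^{h,i+\frac12} - \widehat{e}_{\vect{v}_s}^{h,i+\frac12})}{\vect{w}_{s}}
\\
&= R_2^i(\vect{w}_s) + \Thprod{e_{\sigma}^{I,i+\frac 12}}{\grad \vect{w}_s} - \pThprod{e_{\sigma}^{I,i+\frac 12}\vect{n} - \tau_s P_{\widehat{V}_s} e_{\vect{v}_s}^{I,i+\frac12}}{\vect{w}_s}
\\
&= R_2^i(\vect{w}_s) + \pThprod{R(\sigma^{i+\frac12},\vect{v}_s^{i+\frac12})}{\vect{w}_s}.
\end{align*}
The last step is elementwise integration by parts plus the second relation of Theorem~\ref{thm:elasticity-interpolation}, which is linear and so passes to level $i+\frac12$. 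The paper's proof only asserts that the result follows from the projection properties, so it does not settle this line either.

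One further correction to your first step. For the momentum equation, average the plain identity $\Thprod{\rho_{11}\dot{\vect{v}}_s+\rho_{12}\dot{\vect{v}}_f}{\vect{w}_s}+\Thprod{\sigma}{\grad\vect{w}_s}-\pThprod{\sigma\vect{n}}{\vect{w}_s}=\Thprod{\vect{f}}{\vect{w}_s}$, as the paper's proof does, rather than \eqref{eq:variational-eq2}. The extra term $\pThprod{\tau_s(P_{\widehat{V}_s}\vect{v}_s-\widehat{\vect{v}}_s)}{\vect{w}_s}$ in \eqref{eq:variational-eq2} does not vanish for $\vect{w}_s\in P_{k+1}$; this is the same $P_k$ versus $P_{k+1}$ mismatch. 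Your third paragraph, which works with $\tau_s(\vect{v}_s-P_{\widehat{V}_s}\vect{v}_{s,h}-\widehat{e}_{\vect{v}_s})$, already uses the correct identity implicitly.
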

\begin{proof}
Recall that the continuous solution satisfies \eqref{eq:variational-eqs} by Lemma~\ref{lemma:consistency}. Consider then the average of the first four equations in \eqref{eq:variational-eqs} with continuous solutions at $t= t_{i}, t_{i+1}$ which are
{
		\begin{align*}
			\Thprod{\Ael(\dot{\sigma}^{i+\frac 12} + \alpha \dot{p}^{i+\frac 12} \Id)}{r} +\Thprod{\divergence r}{\vect{v}_{s}^{i+\frac 12}} -\pThprod{\widehat{\vect{v}}_{s}^{i+\frac 12}}{r \vect{n}} &= 0 ,
			\\ 
			\Thprod{\rho_{11}\dot{\vect{v}}_{s}^{i+\frac 12} + \rho_{12}\dot{\vect{v}}_{f}^{i+\frac 12}}{\vect{w}_{s}} + \Thprod{\sigma^{i+\frac 12}}{\grad \vect{w}_{s}} &
            \\
            \notag 
            - \pThprod{{\sigma^{i+\frac 12} \vect{n}}}{\vect{w}_{s}} & = \Thprod{\vect{f}^{i+\frac 12}}{\vect{w}_{s}}, 
			\\
			\Thprod{\rho_{12}\dot{\vect{v}}_{s}^{i+\frac 12}+\rho_{22}\dot{\vect{v}}_{f}^{i+\frac 12}}{\vect{w}_{f}} + \Thprod{\frac{\eta}{\kappa} \vect{v}_{f}^{i+\frac 12}}{\vect{w}_{f}} - \Thprod{p^{i+\frac 12} }{\divergence\vect{w}_{f}} &
            \\
            \notag 
            +\pThprod{\widehat{p}^{i+\frac 12}}{\vect{w}_{f}\cdot\vect{n}} & = 0, 
			\\
			\Thprod{s_{0}\dot{p}^{i+\frac 12}}{q} + \Thprod{\Ael(\dot{\sigma}^{i+\frac 12}+\alpha \dot{p}^{i+\frac 12}\Id)}{\alpha q\Id} - \Thprod{\vect{v}_{f}}{\grad q} &\\
			+\; \pThprod{\vect{v}_{f}^{i+\frac 12}\cdot \vect{n} + \tau_f(p^{i+\frac 12} - \widehat{p}^{i+\frac 12})}{q} & = \Thprod{g^{i+\frac 12}}{q}
		\end{align*}
}
for any $(r, \vect{w}_s, \vect{w}_f, q, \widehat{\vect{w}}_s, \widehat{q}) \in \Sigma_{h} \times V_{s,h} \times V_{f,h} \times Q_{h} \times \widehat{V}_{s,h}\times \widehat{Q}_{h}$. Rewriting these equations with the discrete time operator gives
%
	\begin{align*}
		&\Thprod{\Ael(d_t \sigma^{i+1} + \alpha d_t{p}^{i+1} \Id)}{r} +\Thprod{\divergence r}{\vect{v}_{s}^{i+\frac 12}} -\pThprod{\widehat{\vect{v}}_{s}^{i+\frac 12}}{r \vect{n}} 
		\\
		\notag
		& = \Thprod{\Ael(d_t \sigma^{i+1} - \dot{\sigma}^{i+\frac 12} + \alpha (d_t{p}^{i+1} - \dot{p}^{i+\frac 12})\Id)}{r} ,
		\\ 
		&\Thprod{\rho_{11}d_t{\vect{v}}_{s}^{i+1}+\rho_{12}d_t{\vect{v}}_{f}^{i+1}}{\vect{w}_{s}} + \Thprod{\sigma^{i+\frac 12}}{\grad \vect{w}_{s}} - \pThprod{\sigma^{i+\frac 12} \vect{n} }{\vect{w}_{s}}
		\\
		& = \Thprod{\vect{f}^{i+\frac 12}}{\vect{w}_{s}} + \Thprod{\rho_{11}(d_t{\vect{v}}_{s}^{i+1}-\dot{\vect{v}}^{i+\frac 12})}{\vect{w}_{s}} + \Thprod{\rho_{12}(d_t{\vect{v}}_{f}^{i+1}-\dot{\vect{v}}_f^{i+\frac 12}) }{\vect{w}_{s}}, 
		\\
		&\Thprod{\rho_{12}d_t{\vect{v}}_{s}^{i+1}+\rho_{22}d_t{\vect{v}}_{f}^{i+1}}{\vect{w}_{f}} + \Thprod{\frac{\eta}{\kappa} \vect{v}_{f}^{i+\frac 12}}{\vect{w}_{f}} 
		- \Thprod{p^{i+\frac 12} }{\divergence\vect{w}_{f}} +\pThprod{\widehat{p}^{i+\frac 12}}{\vect{w}_{f}\cdot\vect{n}} 
		\\
		&= \Thprod{\rho_{12}(d_t{\vect{v}}_{s}^{i+1} - \dot{\vect{v}}_{s}^{i+\frac 12}+\rho_{22}(d_t{\vect{v}}_{f}^{i+1} - \dot{\vect{v}}_{f}^{i+\frac 12})}{\vect{w}_{f}} , 
		\\
		&\Thprod{s_{0}d_t{p}^{i+1}}{q} + \Thprod{\Ael(d_t{\sigma}^{i+1}+\alpha d_t{p}^{i+1}\Id)}{\alpha q\Id} - \Thprod{\vect{v}_{f}^{i+\frac 12}}{\grad q} 
		\\
		&+ \pThprod{\vect{v}_{f}^{i+\frac 12}\cdot \vect{n} + \tau_{f} (p^{i+\frac 12} - \widehat{p}^{i+\frac 12})}{q} 
		\\
		&= \Thprod{ g^{i+\frac 12}}{q} 
		+ \Thprod{s_{0}(d_t{p}^{i+1}-\dot{p}^{i+\frac 12})}{q} + \Thprod{\Ael(d_t{\sigma}^{i+1}- \dot{\sigma}^{i+\frac 12}+\alpha (d_t{p}^{i+1} - \dot{p}^{i+\frac 12})\Id)}{\alpha q\Id} .
	\end{align*}
%
By subtracting \eqref{eq:fully-discrete-cn-eqs} from the last four equations we obtain 
%
	\begin{align*}
		&\Thprod{\Ael(d_t e_{\sigma}^{i+1} + \alpha d_te_{p}^{i+1} \Id)}{r} +\Thprod{\divergence r}{e_{\vect{v}_{s}}^{i+\frac 12}} -\pThprod{\widehat{e}_{\vect{v}_{s}}^{i+\frac 12}}{r \vect{n}} 
		\\
		&\quad = \Thprod{\Ael(d_t \sigma^{i+1} - \dot{\sigma}^{i+\frac 12} + \alpha (d_t{p}^{i+1} - \dot{p}^{i+\frac 12})\Id)}{r} ,
		\\ 
		&\Thprod{\rho_{11}d_t e_{\vect{v}_{s}}^{i+1}+\rho_{12}d_t e_{\vect{v}_{f}}^{i+1}}{\vect{w}_{s}} 
		+ \Thprod{e_{\sigma}^{i+\frac 12} }{\grad \vect{w}_{s}} - \pThprod{e_{\sigma}^{i+\frac 12} \vect{n} - \tau_s ({P_{\widehat{V}_s}} \vect{v}_{s,h}^{i+\frac 12} - \widehat{\vect{v}}_{s,h}^{i+\frac 12})}{\vect{w}_{s}}
		\\
		&\quad=  \Thprod{\rho_{11}(d_t{\vect{v}}_{s}^{i+1}-\dot{\vect{v}}_s^{i+\frac 12})}{\vect{w}_{s}} + \Thprod{\rho_{12}(d_t{\vect{v}}_{f}^{i+1}-\dot{\vect{v}}_f^{i+\frac 12}) }{\vect{w}_{s}}, 
		\\
		&\Thprod{\rho_{12}d_t e_{\vect{v}_{s}}^{i+1}+\rho_{22}d_t e_{\vect{v}_{f}}^{i+1}}{\vect{w}_f} + \Thprod{\frac{\eta}{\kappa} e_{\vect{v}_{f}}^{i+\frac 12}}{\vect{w}_{f}} 
		- \Thprod{e_p^{i+\frac 12} }{\divergence\vect{w}_{f}} +\pThprod{\widehat{e}_p^{i+\frac 12}}{\vect{w}_{f}\cdot\vect{n}} 
		\\
		&\quad = \Thprod{\rho_{12}(d_t{\vect{v}}_{s}^{i+1} - \dot{\vect{v}}_{s}^{i+\frac 12}}{\vect{w}_f} + \Thprod{\rho_{22}(d_t{\vect{v}}_{f}^{i+1} - \dot{\vect{v}}_{f}^{i+\frac 12})}{\vect{w}_{f}} , 
		\\
		&\Thprod{s_{0}d_t e_{p}^{i+1}}{q} + \Thprod{\Ael(d_t e_{\sigma}^{i+1}+\alpha d_t e_{p}^{i+1}\Id)}{\alpha q\Id} - \Thprod{e_{\vect{v}_{f}}^{i+\frac 12}}{\grad q}
		\\
		&\qquad + \pThprod{e_{\vect{v}_{f}}^{i+\frac 12}\cdot \vect{n} + \tau_{f} (e_p^{i+\frac 12} - \widehat{e}_p^{i+\frac 12})}{q}  
		\\
		&\quad = \Thprod{s_{0}(d_t{p}^{i+1}-\dot{p}^{i+\frac 12})}{q} + \Thprod{\Ael(d_t{\sigma}^{i+1}- \dot{\sigma}^{i+\frac 12}+\alpha (d_t{p}^{i+1} - \dot{p}^{i+\frac 12})\Id)}{\alpha q\Id} . 
		\\
	\end{align*}
%
Similarly to the proof of Lemma \ref{eq:interpolationerror_eqns},  we split the error into projection error and projected error in the system above and move the projection error term to the right-hand side. Then the proof follows by the properties of the projection operators.
\end{proof}

Next, we introduce an auxiliary result to the present the main result, the fully discrete error estimates.
\begin{lemma}
\label{lemma:not-gronwall}
Let $\{A_i\}_i, \{B_i\}_i$, $\{E_i\}_i$, and $\{D_i\}_i$ be sequences of nonnegative numbers. Suppose these sequences satisfy
\begin{equation*}
	A_n^2 + \sum_{i=0}^n B_i^2 \leq  A_0^2 + \sum_{i=1}^{n} E_i A_i + \sum_{i=0}^n D_i
\end{equation*}	
for all $n \geq 0$. Then for any $n\geq 0$, there exists $C>0$ independent of $n$ such that
%
	\begin{align*}
		A_n
		&\leq  A_0 +  \sum_{i=1}^{n} E_i + \left({\sum_{i=0}^n D_i}\right)^{1/2} ,
		\\
		\left({\sum_{i=0}^n B_i^2}\right)^{1/2}
		&\leq \;C \left({A_0 +  \sum_{i=1}^{n} E_i + \left({\sum_{i=0}^n D_i }\right)^{1/2}}\right).
	\end{align*}	    
%
\end{lemma}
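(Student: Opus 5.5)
The plan is to copy the discrete analogue of the maximum-index argument used above to prove \eqref{eq:integral-estimate}. Fix $n\ge 0$ and pick an index $m\in\{0,\dots,n\}$ with $A_m=\max_{0\le i\le n}A_i$. Apply the hypothesis at level $m$ and drop the nonnegative sum $\sum_{i=0}^m B_i^2$. Bound $E_iA_i\le E_iA_m$, and use that the partial sums of the nonnegative $E_i$ and $D_i$ are nondecreasing. This gives
\begin{align*}
A_m^2 \le A_0^2 + A_m \mathcal{E}_n + \mathcal{D}_n, \qquad \mathcal{E}_n:=\sum_{i=1}^n E_i,\quad \mathcal{D}_n:=\sum_{i=0}^n D_i .
\end{align*}

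Next, complete the square:
\begin{align*}
\left(A_m-\tfrac12\mathcal{E}_n\right)^2 \le A_0^2+\tfrac14\mathcal{E}_n^2+\mathcal{D}_n .
\end{align*}
Take square roots and use $\sqrt{a^2+b^2+c^2}\le a+b+c$. This gives $A_m\le A_0+\mathcal{E}_n+\mathcal{D}_n^{1/2}$. The first claim follows since $A_n\le A_m$. If $m=0$ the same bound is trivial.

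For the second claim, return to the hypothesis at level $n$ and drop $A_n^2\ge 0$:
\begin{align*}
\sum_{i=0}^n B_i^2\le A_0^2+\mathcal{E}_n\max_{1\le i\le n}A_i+\mathcal{D}_n .
\end{align*}
By the first claim applied at each $i\le n$, together with monotonicity of the partial sums, we have $\max_i A_i\le S:=A_0+\mathcal{E}_n+\mathcal{D}_n^{1/2}$. Each of the three terms $A_0^2$, $\mathcal{E}_nS$ and $\mathcal{D}_n$ is then at most $S^2$, so $\sum_{i=0}^n B_i^2\le 3S^2$. Taking square roots gives the second claim with $C=\sqrt3$.

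The only delicate point is the maximum trick: the bound must use the maximum over all earlier indices rather than $A_n$ itself. It works because the hypothesis holds for every level $n$, so the maximizing index $m$ is itself an admissible level. This mirrors the argument used for the continuous estimate \eqref{eq:integral-estimate}.
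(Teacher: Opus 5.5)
Your proof is correct. The paper itself gives no argument for this lemma; it only refers to Lemma~2 of Cesmelioglu et al. Your maximum-index plus completing-the-square argument is the discrete counterpart of the paper's own proof of \eqref{eq:integral-estimate}. It therefore makes the lemma self-contained and yields the explicit constant $C=\sqrt{3}$, which is plainly independent of $n$.

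One small fix is needed in the second step. For $n=0$, the quantity $\max_{1\le i\le n}A_i$ is an empty maximum. The term is harmless there, since $\sum_{i=1}^{0}E_i=0$, but it is cleaner to bound by $\max_{0\le i\le n}A_i=A_m$. Your first step already shows $A_m\le S$, so you do not need to re-apply the first claim at each index $i$.
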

\begin{proof}
We refer to \cite[Lemma~2]{Cesmelioglu:2023} for its proof.
\end{proof}
\begin{theorem}\label{thm:discrete-error-estimates}
Suppose that $\{(\sigma_{h}^i, \vect{v}_{s,h}^i, \vect{v}_{f,h}^i, p_{h}^i, \widehat{\vect{v}}_{s,h}^i, \widehat{p}_{h}^i)\}_{i=0}^{n}$ is solution of \eqref{eq:fully-discrete-cn-eqs} with numerical initial data satisfying the compatibility conditions \eqref{eq:initial-data-constraint1}, \eqref{eq:initial-data-constraint2}, and the approximations properties \eqref{eq:initial-data-approximation1}, \eqref{eq:initial-data-approximation2}. If exact solutions satisfy the regularity assumptions to make the quantities in the estimate below well-defined, then we have 
\begin{align*}
	\| e_{\sigma}^n + \alpha e_{p}^n\Id \|_{\Ael} + &\|e_{p}^n\|_{s_0} + \rho_0 (\|e_{\vect{v}_{s}}^n\|_{L^2} + \|e_{\vect{v}_{f}}^n\|_{L^2} )
	\;
	\\
	&\le \; C h^m (| \sigma(0), \vect{v}_{f}(0), p(0)|_{m} + |\vect{v}_{s}(0)|_{m})
	\\ 
	&\quad + C h^m \left[  \| \partial_t \sigma, \partial_t p, \partial_t \vect{v}_s, \partial_t \vect{v}_f \|_{L^1(0, t_{n}; H^m)} + | \sigma^n, p^n, \vect{v}_s^n, \vect{v}_f^n |_{m} \right]
	\\
	&\quad + C h^m \left[ \Delta t \sum_{i=0}^{n-1}| \sigma^{i+\frac 12}, p^{i+\frac 12}, \vect{v}_s^{i+\frac 12}, \vect{v}_f^{i+\frac 12} |_{m}^2 \right]^{\frac 12}
	\\
	&\quad + C(\Delta t)^2 \| \partial_t^3 \sigma, \partial_t^3 p, \partial_t^3 \vect{v}_s, \partial_t^3 \vect{v}_f \|_{L^1(0, t_{n}; L^2)} 
\end{align*}
for $1 \le m \le k+1$. Moreover, if we assume that the coefficients satisfy the assumption $\Ael \alpha \Id : \alpha \Id \le C_s s_0$, with a uniform constant $C_s>0$, then $\| e_{\sigma}^n \|_{\Ael}$ satisfies an error estimate with the same right-hand side of the estimate above and a constant depending on $C_s$. 
\end{theorem}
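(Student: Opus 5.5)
We will mimic the semidiscrete argument of Theorem~\ref{thm:semidiscrete-error-estimate}, replacing the time-continuous energy identity by a discrete one. First we establish a discrete analogue of Lemma~\ref{lemma:energy}. We test the fully discrete error equations \eqref{eq:fully-discrete-reduced-error-eqs} with the averaged projected errors $r=e_\sigma^{h,i+\frac12}$, $\vect{w}_s=e_{\vect{v}_s}^{h,i+\frac12}$, $\vect{w}_f=e_{\vect{v}_f}^{h,i+\frac12}$, $q=e_p^{h,i+\frac12}$, $\widehat{\vect{w}}_s=\widehat{e}_{\vect{v}_s}^{h,i+\frac12}$, $\widehat{q}=\widehat{e}_p^{h,i+\frac12}$, and add all equations. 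The skew terms cancel after elementwise integration by parts, exactly as in the proof of Lemma~\ref{lemma:energy}. The time-derivative terms use the identity $(M d_t a^{i+1}, a^{i+\frac12}) = \frac{1}{2\Delta t}\big((Ma^{i+1},a^{i+1})-(Ma^i,a^i)\big)$ for symmetric $M$. For the coupled blocks we group $\sigma+\alpha p\Id$ together, and we group the $2\times2$ density block. With $X_i^2$ defined as $X(t_i)^2$ for the projected errors, this gives
\begin{align*}
X_{i+1}^2 - X_i^2 + 2\Delta t\, Y_{i}^2 = 2\Delta t\,\big(\text{sum of right-hand side functionals at } i+\tfrac12\big),
\end{align*}
where $Y_i^2$ collects the $\eta/\kappa$, $\tau_s$ and $\tau_f$ dissipation terms evaluated at $i+\frac12$. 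The terms involving $e_\sigma^{I,i+\frac12}$ and the projection $\Pi_{\vect{v}_s}$ in \eqref{eq:fully-discrete-reduced-error-eq2} and \eqref{eq:fully-discrete-reduced-error-eq5} combine, via the defining relations of Theorem~\ref{thm:elasticity-interpolation}, into $\pThprod{R(\sigma^{i+\frac12},\vect{v}_s^{i+\frac12})}{P_{\widehat{V}_s}e_{\vect{v}_s}^{h,i+\frac12}-\widehat{e}_{\vect{v}_s}^{h,i+\frac12}}$, as in the semidiscrete case. Summing over $i=0,\dots,n-1$ then telescopes.

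Next we bound the right-hand sides. Terms of the form $d_t e^{I,i+1}_\xi$ are handled by writing $d_t e^{I,i+1}_\xi = \frac1{\Delta t}\int_{t_i}^{t_{i+1}}\partial_t e^I_\xi$; since the projections commute with $\partial_t$, the approximation estimates \eqref{eq:elasticity-interpolation-estimate} and \eqref{eq:darcy-interpolation-estimate} give $\Delta t\|d_te_\xi^{I,i+1}\|\le Ch^m\|\partial_t\xi\|_{L^1(t_i,t_{i+1};H^m)}$. The consistency terms $d_t\xi^{i+1}-\dot\xi^{i+\frac12}$ are Crank--Nicolson truncation errors. A Taylor expansion with integral remainder, applied around $t_{i+\frac12}$ to both the difference quotient and the average of $\dot\xi$, yields $\Delta t\|d_t\xi^{i+1}-\dot\xi^{i+\frac12}\|\le C(\Delta t)^2\|\partial_t^3\xi\|_{L^1(t_i,t_{i+1};L^2)}$. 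Both types pair with $X$-quantities; to get an $X_i$ form we bound $\|\cdot^{i+\frac12}\|\le \frac12(X_i+X_{i+1})$ using the coercivity \eqref{eq:rho-coercivity} of the density block. The dissipative-type terms, namely $\frac{\eta}{\kappa}e_{\vect{v}_f}^{I,i+\frac12}$ and the $R$-term paired with the $\tau_s$ jump, are absorbed by Young's inequality into $Y_i^2$. Using (A3) and \eqref{eq:elasticity-interpolation-estimate}, they leave $D_i = C\Delta t\, h^{2m}|\sigma^{i+\frac12},\vect{v}_f^{i+\frac12},\vect{v}_s^{i+\frac12}|_m^2$.

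We then invoke Lemma~\ref{lemma:not-gronwall} with $A_i=X_i$, $B_i^2=\Delta tY_i^2$, and $E_i$ the local $L^1$-in-time contributions; after shifting indices, the half-sum $\frac12(X_i+X_{i+1})$ is harmlessly split between consecutive $E$'s. This yields $X_n\le X_0+\sum E_i+(\sum D_i)^{1/2}$. We bound $X_0$ by \eqref{eq:initial-data-approximation1}, \eqref{eq:initial-data-approximation2} and the triangle inequality. Finally, the triangle inequality with $e^{I,n}$, controlled by the projection estimates, gives the stated bound on the full errors, with $\rho_0$ entering through the lower bound of the density quadratic form. The $\|e_\sigma^n\|_{\Ael}$ statement follows exactly as at the end of Theorem~\ref{thm:semidiscrete-error-estimate}, using $\Ael\alpha\Id:\alpha\Id\le C_s s_0$.

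The main obstacle is the bookkeeping in the discrete energy identity. We must verify that the projection-error boundary terms in \eqref{eq:fully-discrete-reduced-error-eq2} and \eqref{eq:fully-discrete-reduced-error-eq5} really reduce to the $R$-term paired against the stabilized jump $P_{\widehat{V}_s}e_{\vect{v}_s}^{h}-\widehat{e}_{\vect{v}_s}^{h}$, so that it can be absorbed by the $\tau_s$ dissipation. We would first check this by reproducing the cancellation in Lemma~\ref{eq:interpolationerror_eqns} at the averaged time level; this works because the scheme is linear in the averaged quantities.
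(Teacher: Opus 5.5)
Your proposal is correct and follows the paper's proof essentially step by step. Both arguments test the Crank--Nicolson error equations with the averaged projected errors to obtain a telescoping discrete energy identity, reduce the projection-error boundary terms to $\pThprod{R(\sigma^{i+\frac12},\vect{v}_s^{i+\frac12})}{P_{\widehat{V}_s}e_{\vect{v}_s}^{h,i+\frac12}-\widehat{e}_{\vect{v}_s}^{h,i+\frac12}}$, bound the projection and truncation terms by $L^1$-in-time quantities, absorb the $\eta/\kappa$ and $R$ terms into the dissipation by Young's inequality, and conclude with Lemma~\ref{lemma:not-gronwall} and the triangle inequality. One harmless sign slip: with \eqref{eq:fully-discrete-reduced-error-eq6} written as in the paper, you need $\widehat{q}=-\widehat{e}_p^{h,i+\frac12}$ rather than $+\widehat{e}_p^{h,i+\frac12}$ for the $\tau_f$ terms to combine into the dissipation, and this costs nothing because that equation has zero right-hand side.
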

\begin{proof}
By taking $r = 2 e_{\sigma}^{h,i+\frac 12}$, $\vect{w}_{s} = 2e_{\vect{v}_{s}}^{h,i+\frac 12}$, $\vect{w}_{f} = 2e_{\vect{v}_{f}}^{h,i+\frac 12}$, $q = 2e_p^{h,i+\frac 12}$, $\widehat{\vect{w}}_{s} = -2\widehat{e}_{\vect{v}_{s}}^{h,i+\frac 12}$, $\widehat{q}=-2\widehat{e}_{p}^{h,i+\frac 12}$ in \eqref{eq:fully-discrete-reduced-error-eqs} and adding all the equations, 
\begin{align}
	\label{eq:fully-discrete-energy-identity}
	&2 \Thprod{\Ael(d_t e_{\sigma}^{h,i+1} + \alpha d_te_{p}^{h,i+1} \Id)}{e_{\sigma}^{h,i+\frac 12} + \alpha e_p^{h,i+\frac 12}} + 2 \Thprod{s_{0}d_t e_{p}^{h,i+1}}{e_p^{h,i+\frac 12}}
	\\
	\notag
	&\quad + 2\Thprod{\rho_{11}d_t e_{\vect{v}_{s}}^{h,i+1}}{e_{\vect{v}_{s}}^{h,i+\frac 12}} + 2\Thprod{\rho_{12}d_t e_{\vect{v}_{f}}^{h,i+1}}{e_{\vect{v}_{s}}^{h,i+\frac 12}} 
	\\
	\notag
	&\quad + 2\Thprod{\rho_{12}d_t e_{\vect{v}_{s}}^{h,i+1}}{e_{\vect{v}_{f}}^{h,i+\frac 12}} + 2\Thprod{\rho_{22}d_t e_{\vect{v}_{f}}^{h,i+1}}{e_{\vect{v}_{f}}^{h,i+\frac 12}} 
	\\
	\notag
	&\quad + 2 \Thprod{\frac{\eta}{\kappa} e_{\vect{v}_{f}}^{h,i+\frac 12}}{e_{\vect{v}_{f}}^{h,i+\frac 12}} + 2\pThprod{\tau_{s} ({P_{\widehat{V}_s}} e_{\vect{v}_{s}}^{h, i+\frac 12} - \widehat{e}_{\vect{v}_{s}}^{h,i+\frac 12})}{{P_{\widehat{V}_s}} e_{\vect{v}_{s}}^{h,i+\frac 12} - \widehat{e}_{\vect{v}_{s}}^{h,i+\frac 12}} 
	\\
	\notag 
	&\quad + 2\pThprod{\tau_{f} (e_{p}^{h,i+\frac 12} - \widehat{e}_{p}^{h,i+\frac 12})}{e_{p}^{h,i+\frac 12} - \widehat{e}_{p}^{h,i+\frac 12}}
	\\
	\notag
	&= 2 R_1^i(e_{\sigma}^{h,i+\frac 12}) + 2 R_2^i(e_{\vect{v}_s}^{h,i+\frac 12}) + 2 R_3^i(e_{\vect{v}_f}^{h,i+\frac 12}) + 2 R_4^i(e_{p}^{h,i+\frac 12}) 
	\\
	\notag 
	&\quad + \Thprod{e_{\sigma}^{I,i+\frac 12}}{2 \grad e_{\vect{v}_{s}}^{h,i+\frac 12}} - \pThprod{e_{\sigma}^{I,i+\frac 12} \vect{n} - \tau_s (\Pi_{\vect{v}_s} \vect{v}_{s}^{i+\frac 12} - {P_{\widehat{V}_s}}\widehat{\vect{v}}_{s}^{i+\frac 12})}{2e_{\vect{v}_{s}}^{h,i+\frac 12}} 
	\\
	\notag 
	&\quad - \pThprod{e_{\sigma}^{I,i+\frac 12} \vect{n} - \tau_{s} ({P_{\widehat{V}_s}} e_{\vect{v}_{s}}^{I,i+\frac 12} - \widehat{e}_{\vect{v}_{s}}^{I,i+\frac 12})}{\widehat{e}_{\vect{v}_{s}}^{h, i+\frac 12}}
	\\
	\notag
	&= 2 R_1^i(e_{\sigma}^{h,i+\frac 12}) + 2 R_2^i(e_{\vect{v}_s}^{h,i+\frac 12}) + 2 R_3^i(e_{\vect{v}_f}^{h,i+\frac 12}) + 2 R_4^i(e_{p}^{h,i+\frac 12}) 
	\\
	\notag
	&\quad + 2 \pThprod{R(\sigma^{i+\frac 12}, \vect{v}_s^{i+\frac 12})}{{P_{\widehat{V}_s}} e_{\vect{v}_s}^{h,i+\frac 12} - \widehat{e}_{\vect{v}_s}^{i+\frac 12}}
\end{align}
where the last formula follows by the properties of the projection operator in Theorem~\ref{thm:elasticity-interpolation}. 
Let us define $X_i$, $Z_i$, $Y_i$ by 
\begin{align*}
	X_i^2 &= \Thprod{\Ael(e_{\sigma}^{h,i} + \alpha e_{p}^{h,i} \Id)}{e_{\sigma}^{h,i} + \alpha e_{p}^{h,i} \Id)} +\Thprod{s_{0}e_{p}^{h,i}}{e_{p}^{h,i}}
	\\
	&\quad + \Thprod{\rho_{11} e_{\vect{v}_{s}}^{h,i}}{e_{\vect{v}_{s}}^{h,i}} + \Thprod{\rho_{12} e_{\vect{v}_{f}}^{h,i}}{e_{\vect{v}_{s}}^{h,i}} + \Thprod{\rho_{12} e_{\vect{v}_{s}}^{h,i}}{e_{\vect{v}_{f}}^{h,i}} + \Thprod{\rho_{22} e_{\vect{v}_{f}}^{h,i}}{e_{\vect{v}_{f}}^{h,i}} ,
	\\
	Z_i^2 &=  \Thprod{\frac{\eta}{\kappa} e_{\vect{v}_{f}}^{h,i+\frac 12}}{e_{\vect{v}_{f}}^{h,i+\frac 12}} + \pThprod{\tau_{s} ({P_{\widehat{V}_s}} e_{\vect{v}_{s}}^{h, i+\frac 12} - \widehat{e}_{\vect{v}_{s}}^{h,i+\frac 12})}{{P_{\widehat{V}_s}} e_{\vect{v}_{s}}^{h,i+\frac 12} - \widehat{e}_{\vect{v}_{s}}^{h,i+\frac 12}} 
	\\
	&\quad + \pThprod{\tau_{f} (e_{p}^{h,i+\frac 12} - \widehat{e}_{p}^{h,i+\frac 12})}{e_{p}^{h,i+\frac 12} - \widehat{e}_{p}^{h,i+\frac 12}},
	\\
	Y_i^2 &= \Delta t \sum_{j=0}^{i-1}Z_j^2 .
\end{align*}
Then, \eqref{eq:fully-discrete-energy-identity} is 
\begin{align*}
	\frac{1}{\Delta t} (X_{i+1}^2 - X_i^2) + 2 Z_i^2 &= 2 R_1^i(e_{\sigma}^{h,i+\frac 12}) + 2 R_2^i(e_{\vect{v}_s}^{h,i+\frac 12}) + 2 R_3^i(e_{\vect{v}_f}^{h,i+\frac 12}) + 2 R_4^i(e_{p}^{h,i+\frac 12}) 
	\\
	&\quad + 2\pThprod{R(\sigma^{i+\frac 12}, \vect{v}_s^{i+\frac 12})}{{P_{\widehat{V}_s}} e_{\vect{v}_{s}}^{h,i+\frac 12} - \widehat{e}_{\vect{v}_{s}}^{h,i+\frac 12}}.
\end{align*}
The summation of this identity from $i=0$ to $i=n-1$ gives
\begin{align}
	\notag
	X_n^2 - X_0^2 + 2 Y_n^2 &= 2 \Delta t \sum_{i=0}^{n-1} (R_1^i(e_{\sigma}^{h,i+\frac 12}) + R_2^i(e_{\vect{v}_s}^{h,i+\frac 12}) + R_3^i(e_{\vect{v}_f}^{h,i+\frac 12}) + R_4^i(e_{p}^{h,i+\frac 12}))
	\\
	\label{eq:fully-discrete-sum1}
	&\quad + 2\Delta t \sum_{i=0}^{n-1} \pThprod{R(\sigma^{i+\frac 12}, \vect{v}_s^{i+\frac 12})}{{P_{\widehat{V}_s}} e_{\vect{v}_{s}}^{h,i+\frac 12} - \widehat{e}_{\vect{v}_{s}}^{h,i+\frac 12}}.
\end{align}
By the inequality 
	\begin{align*}
		\|e_{\xi}^{I,i+1} - e_{\xi}^{I,i}\|_{L^2(\Omega)} &\le  \int_{t_i}^{t_{i+1}} \left\|\partial_t \xi (s) - \Pi_{\xi} \partial_t \xi(s)\right\|_{L^2(\Omega)}\,ds & & \text{ for } \xi = \sigma, \vect{v}_s, \vect{v}_f, p, 
	\end{align*}
	with \eqref{eq:elasticity-interpolation-estimate}, \eqref{eq:darcy-interpolation-estimate}, and by the time discretization error approximation (cf.~\cite[Lemma~6]{Lee-Bolanos:2025})
	\begin{align*}
		\|\xi^{i+1} - \xi^i - \Delta t \dot{\xi}^{i+\frac 12}\|_{L^2(\Omega)} &\le \frac{(\Delta t)^2}{2} \|\partial_t^3 \xi \|_{L^1(t_i, t_{i+1}; L^2(\Omega))} \quad \text{ for } \xi = \sigma, \vect{v}_s, \vect{v}_f, p,
	\end{align*}
we have 
\begin{align}
	\label{eq:Ri-estimate1}
	&2\Delta t |R_1^i(e_{\sigma}^{h,i+\frac 12}) + R_4^i(e_{p}^{h,i+\frac 12}))|
	\\
	\notag
	&\quad \le C( h^m \| {\partial_t \sigma, \partial_t \vect{v}_s, \partial_t \vect{v}_f, \partial_t p} \|_{L^1(t_i, t_{i+1}; H^m)} + (\Delta t)^2 \| \partial_t^3 \sigma, \partial_t^3 p \|_{L^1(t_i, t_{i+1}; L^2)}) 
    \\
    \notag
    &\qquad \times \| e_{\sigma}^{h,i+\frac 12} + \alpha e_{p}^{h,i+\frac 12}\Id \|_{\Ael} 
	\\
	\notag
	&\qquad + C( h^m \| {\partial_t \vect{v}_f}, \partial_t p \|_{L^1(t_i, t_{i+1}; H^m)} + (\Delta t)^2 \| \partial_t^3 p \|_{L^1(t_i, t_{i+1}; L^2)})\|e_{p}^{h,i+\frac 12}\|_{s_0}
	\\
	\notag
	&\quad \le  C( h^m \| {\partial_t \sigma, \partial_t \vect{v}_s, \partial_t \vect{v}_f, \partial_t p} \|_{L^1(t_i, t_{i+1}; H^m)} + (\Delta t)^2 \| \partial_t^3 \sigma, \partial_t^3 p \|_{L^1(t_i, t_{i+1}; L^2)}) (X_i + X_{i+1}) 
	\\
	\notag 
	& \quad =: E_{a,i}(X_{i} + X_{i+1}),
	\\
	\label{eq:Ri-estimate2}
	&2\Delta t |R_2^i(e_{\vect{v}_s}^{h,i+\frac 12}) + R_3^i(e_{\vect{v}_f}^{h,i+\frac 12}))|
	\\
	\notag
	&\quad \le C( h^m \| {\partial_t \sigma, \partial_t \vect{v}_s, \partial_t \vect{v}_f, \partial_t p} \|_{L^1(t_i, t_{i+1}; H^m)} + (\Delta t)^2 \| \partial_t^3 \vect{v}_s, \partial_t^3 \vect{v}_f \|_{L^1(t_i, t_{i+1}; L^2)}) 
    \\
    \notag
    &\qquad \times \rho_0 (\| e_{\vect{v}_s}^{h,i+\frac 12}\|_{L^2} + \| e_{\vect{v}_f}^{h,i+\frac 12} \|_{L^2}) + C \Delta t h^m \| \vect{v}_f^i, \vect{v}_f^{i+1} \|_{H^m} \| e_{\vect{v}_f}^{h,i+\frac 12} \|_{L^2} 
	\\
	\notag
	&\quad \le C( h^m \| {\partial_t \sigma, \partial_t \vect{v}_s, \partial_t \vect{v}_f, \partial_t p} \|_{L^1(t_i, t_{i+1}; H^m)} + (\Delta t)^2 \| \partial_t^3 \vect{v}_s, \partial_t^3 \vect{v}_f \|_{L^1(t_i, t_{i+1}; L^2)}) (X_i + X_{i+1}) 
	\\
	\notag
	&\qquad + C \Delta t h^m | \vect{v}_f^{i+\frac 12} |_{H^m} Z_i ,
	\\
	& \notag 
	\quad =: E_{b,i} (X_i + X_{i+1}) + C \Delta t h^m | \vect{v}_f^{i+\frac 12} |_{H^m} Z_i ,
	\\
	\label{eq:Ri-estimate3}
	&\Delta t |\pThprod{R(\sigma^{i+\frac 12}, \vect{v}_s^{i+\frac 12})}{{P_{\widehat{V}_s}} e_{\vect{v}_{s}}^{h,i+\frac 12} - \widehat{e}_{\vect{v}_{s}}^{h,i+\frac 12}}| 
	\\
	\notag
	&\quad \le C\Delta t h^m (|\sigma^{i+\frac 12}|_{m} + |\vect{v}_s^{i+\frac 12}|_{m}) \|\tau_s^{\frac 12} (P_{\widehat{V}_s} e_{\vect{v}_{s}}^{h,i+\frac 12} - \widehat{e}_{\vect{v}_{s}}^{h,i+\frac 12})\|_{L^2(\mathcal{F}_h)}
	\\
	\notag
	&\quad \le C\Delta t h^m (|\sigma^{i+\frac 12}|_{m} + |\vect{v}_s^{i+\frac 12}|_{m}) Z_i .
\end{align}
By \eqref{eq:fully-discrete-sum1}, \eqref{eq:Ri-estimate1}, \eqref{eq:Ri-estimate2}, \eqref{eq:Ri-estimate3} with Young's inequality, 
\begin{align*}
	X_n^2 + Y_n^2 \le X_0^2 + \sum_{i=1}^{n-1} (E_{a,i}+E_{b,i}) X_i + C \sum_{i=0}^{n-1} \Delta t h^{2m} (|\sigma^{i+\frac 12}|_m^2 + |\vect{v}_s^{i+\frac 12}|_m^2 + |\vect{v}_f^{i+\frac 12}|_m^2), 
\end{align*}
and by Lemma~\ref{lemma:not-gronwall}, 
\begin{align*}
	X_n \le X_0 + \sum_{i=1}^{n-1} (E_{a,i}+E_{b,i}) + Ch^m \left( \sum_{i=0}^{n-1} \Delta t (|\sigma^{i+\frac 12}|_m^2 + |\vect{v}_s^{i+\frac 12}|_m^2 + |\vect{v}_f^{i+\frac 12}|_m^2) \right)^{\frac 12} .
\end{align*}
The conclusion follows by the triangle inequality, \eqref{eq:darcy-interpolation-estimate}, and \eqref{eq:elasticity-interpolation-estimate}.
\end{proof}


\section{Numerical examples}
\label{sec:num_test}
{
In this section, we present several numerical examples that demonstrate the properties of the fully discrete scheme \eqref{eq:fully-discrete-cn-eqs}. First, in Example 1, we perform convergence tests using smooth manufactured solutions to confirm the optimal \textit{a priori} error estimates for both the semidiscrete and fully discrete schemes. Compressible and nearly incompressible isotropic materials are considered to illustrate the locking-free property of the method. In the next two examples we simulate physically realistic scenarios of poroelastic wave propagation, confirming the method's applicability to complex geophysical models. In Example 2, we present simulation in both isotropic and anisotropic media, and finally in Example we simulate in heterogeneous media. All experiments in this section were implemented using the open source finite element software NETGEN/NGSolve \cite{schoberl1997netgen,schoberl2014c++}.
}

\subsubsection*{Computational Efficiency: Degrees of Freedom Comparison}

To explicitly address the computational advantages of the proposed HDG formulation over standard Discontinuous Galerkin (DG) methods, we perform a comparison of the globally coupled Degrees of Freedom (DOFs). A well-known drawback of standard DG methods is the rapid growth of the global system size when the polynomial degree $k$ is increased, as all volumetric variables remain globally coupled.

In our HDG approach, all volumetric approximations (local variables $\sigma_h, \vect{v}_{s,h}, \vect{v}_{f,h}, p_h$) are strictly local to each element. Through static condensation, these local DOFs are eliminated from the global system. Consequently, the only globally coupled unknowns are the hybridization trace variables ($\widehat{\vect{v}}_{s,h}, \widehat{p}_h$). Notably, the hybridization variable for solid velocity is approximated by shape functions of degree $k$, one degree lower than its volumetric counterpart.

Table \ref{tab:dof_comparison} presents a theoretical comparison of the global system size for a 2D simplicial mesh with $N_e$ elements and $N_f$ faces. The table highlights the substantial reduction in the global matrix size achieved by the HDG method compared to DG approximation of the local variables $\sigma_h, \vect{v}_{s,h}, \vect{v}_{f,h}, p_h$) using discontinuous piecewise polynomial spaces of degree $k$ as presented in \cite{laPuente-Dumbser-Kaser-Igel:2008}. For high-order approximations ($k=3, 4$), the HDG formulation reduces the global system size by 77\% or more compared to a standard DG approach, demonstrating its exceptional computational efficiency for wave propagation problems where high resolution is required.

\begin{table}[htpb]
    \centering
    \renewcommand{\arraystretch}{1.2}
    \caption{Comparison of globally coupled Degrees of Freedom (DOFs) between a standard DG approach and the proposed HDG method in 2D. The standard DG system considers local approximations $\sigma_h, \vect{v}_{s,h}, \vect{v}_{f,h}, p_h$  of degree $k$. The HDG global system depends only on the trace variables of degree $k$ defined on the mesh skeleton ($N_f$ faces). The reduction percentage is estimated assuming a typical 2D simplicial mesh where $N_f \approx 1.5 N_e$.}
    \begin{tabular}{c c c c}
        \toprule
        $k$ & Total Vol. DOFs & HDG DOFs & Reduction \\
        \midrule
        1 & $24 N_e$  & $6 N_f$  & $\sim 62.5\%$ \\
        2 & $48 N_e$  & $9 N_f$  & $\sim 71.9\%$ \\
        3 & $80 N_e$  & $12 N_f$ & $\sim 77.5\%$ \\
        4 & $120 N_e$ & $15 N_f$ & $\sim 81.3\%$ \\
        \bottomrule
    \end{tabular}
    \label{tab:dof_comparison}
\end{table}

\subsection*{Example 1: History of convergence test}\label{convergenceexamples}
Here we validate the predictions given in Theorem \ref{thm:semidiscrete-error-estimate} and Theorem \ref{thm:discrete-error-estimates} for the HDG method with Crank-Nicolson scheme presented in \eqref{eq:fully-discrete-cn-eqs}. We use the index $k=1,2$ for the polynomial order in the HDG approximation spaces \eqref{eq:HDGspaces}. For each of the approximations $\sigma_{h}$, $\vect{v}_{s,h}$, $\vect{v}_{f,h}$, and $p_{h}$, we compute the $L^{2}$ errors of the approximations for each volumetric variable in the last time step, and then estimate their orders of convergence (e.o.c.). For example, for the stress we have
\[
\mbox{error}(h)=  \| \sigma(t^{n}) - \sigma_{h}^{n}\|_{L^{2}(\Omega)^{2\times 2}},\qquad \mbox{e.o.c}(h) = \frac{ \log( \mbox{error}(h) / \mbox{error}(h')  )}{ \log(h / h')  },
\]
where $h'$ corresponds to the previous mesh size parameter in the computations. 

In this test, we use uniform triangulations with the mesh size parameter $h$ in the computational domain $\Omega=(0,1)^2$ and consider the manufactured solution of the problem \eqref{eq:modelproblem}  given by
\begin{align*}
\vect{u}_{s}(x,y,t) \,\,&=\,\, 
\begin{pmatrix}
    \sin(\pi x) \sin(\pi y)\sin(\pi t) \\
    x y (x - 1) (y - 1)\sin(\pi t)
\end{pmatrix},
\qquad 
  p = x(1 - x)\sin^{2}(\pi y)(2+\cos(\pi t)).
\end{align*}
The material parameters are set to: $ (\rho_{11},\rho_{12}, \rho_{22}) =(1, 1, 2)$,  $(\eta, \kappa, \alpha) = (1,1,1)$, $s_0=1$.  For this example, we test with two sets of elastic constants; first with Young modulus and Poisson ratio $(E, \nu)=(3,0.3)$, and then with a nearly incompressible material $(E, \nu) = (3, 0.499)$. The remaining data for the problem is obtained by substituting the manufactured solution into the equations.

Table \ref{tab:Ex1} presents the errors and estimated convergence orders for polynomial degrees $k=1$ and $k=2$, and for the material parameters $(E, \nu) = (3, 0.3)$.  The results demonstrate an optimal order of convergence of $k+1$ for the approximations of stress ($\sigma$), fluid velocity ($\vect{v}_{f}$), and pressure ($p$). Nonetheless the results in Theorem \ref{thm:semidiscrete-error-estimate} and Theorem \ref{thm:discrete-error-estimates} only cover a convergence order $(k+1)$ of $\vect{v}_s$ errors, we observe a convergence order of $(k+2)$ which is optimal for polynomial degree $k+1$ of $V_{s,h}$.  In Table \ref{tab:Ex1_near_incompressible}, we present the convergence history for the nearly incompressible material with $(E, \nu) = (3, 0.499)$. The same orders of convergence are observed, illustrating the locking-free property of the HDG method.


\begin{table}[htbp!]
\small
\centering
\caption{Example 1. History of convergence test. Material parameters $(E,\nu) = (3,0.3)$.}
\begin{tabular}{@{\hskip .1in}ccl@{\hskip .1in}cl@{\hskip .1in}cc@{\hskip .1in}cc@{\hskip .1in}cc} \toprule
& & &\multicolumn{2}{c}{$\sigma_{h}$}&\multicolumn{2}{c}{$\vect{v}_{s,h}$}&\multicolumn{2}{c}{$\vect{v}_{f,h}$}&\multicolumn{2}{c}{$p_{h}$}\\ 
\cmidrule(lr){4-5} \cmidrule(lr){6-7} \cmidrule(lr){8-9} \cmidrule(lr){10-11}
$k$ & $h$ & & error & e.o.c. & error & e.o.c. & error & e.o.c.& error & e.o.c.\\ \midrule
\multirow{7}{*}{1} 
& $2^{-1}$   & & 3.40e-2 & $-$  & 9.44e-2 & $-$  & 5.63e-1 & $-$  & 1.77e+0 & $-$ \\
& $2^{-2}$   & & 7.36e-3 & 2.21 & 3.20e-2 & 1.56 & 2.82e-1 & 1.00 & 3.80e-1 & 2.22 \\
& $2^{-3}$   & & 1.67e-3 & 2.14 & 6.82e-3 & 2.23 & 9.95e-2 & 1.51 & 8.28e-2 & 2.20 \\
& $2^{-4}$   & & 4.09e-4 & 2.03 & 1.39e-3 & 2.30 & 3.05e-2 & 1.71 & 1.67e-2 & 2.31 \\
& $2^{-5}$   & & 1.09e-4 & 1.91 & 1.68e-4 & 3.05 & 8.44e-3 & 1.85 & 3.71e-3 & 2.17 \\
& $2^{-6}$   & & 2.80e-5 & 1.96 & 1.95e-5 & 3.10 & 2.18e-3 & 1.95 & 8.99e-4 & 2.04 \\
& $2^{-7}$   & & 7.07e-6 & 1.98 & 2.44e-6 & 3.00 & 5.42e-4 & 2.01 & 2.24e-4 & 2.01 \\
\midrule
\multirow{7}{*}{2} 
& $2^{-1}$   & & 9.65e-3 & $-$  & 2.65e-2 & $-$  & 3.46e-1 & $-$  & 4.39e-1 & $-$  \\
& $2^{-2}$   & & 1.16e-3 & 3.06 & 2.92e-3 & 3.18 & 7.37e-2 & 2.23 & 4.88e-2 & 3.17 \\
& $2^{-3}$   & & 1.46e-4 & 2.99 & 1.59e-4 & 4.20 & 1.40e-2 & 2.39 & 5.01e-3 & 3.29 \\
& $2^{-4}$   & & 1.80e-5 & 3.02 & 1.43e-5 & 3.48 & 2.36e-3 & 2.57 & 5.70e-4 & 3.13 \\
& $2^{-5}$   & & 2.24e-6 & 3.01 & 8.29e-7 & 4.10 & 3.56e-4 & 2.73 & 6.98e-5 & 3.03 \\
& $2^{-6}$   & & 2.79e-7 & 3.00 & 5.01e-8 & 4.05 & 4.90e-5 & 2.86 & 8.75e-6 & 2.99 \\
& $2^{-7}$   & & 3.48e-8 & 3.00 & 4.20e-9 & 3.58 & 6.31e-6 & 2.96 & 1.10e-6 & 3.00 \\
\midrule
\multirow{5}{*}{3}
& $2^{-1}$   & & 1.48e-3 & $-$  & 8.87e-2 & $-$  & 8.87e-2 & $-$  &	1.07e-1 & $-$ \\
& $2^{-2}$   & & 8.07e-5 & 4.19	& 1.67e-3 &	5.73 & 1.21e-2 & 2.87 &	3.42e-3 & 4.96 \\
& $2^{-3}$   & & 5.10e-6 & 3.99	& 2.66e-5 &	5.97 & 1.16e-3 & 3.38 &	1.61e-4 & 4.41 \\
& $2^{-4}$   & & 3.24e-7 & 3.97 & 8.30e-7 &	5.00 & 9.60e-5 & 3.59 &	1.09e-5 & 3.88 \\
& $2^{-5}$   & & 2.04e-8 & 3.99 & 3.43e-8 &	4.60 & 6.75e-6 & 3.83 &	7.93e-7	& 3.78 \\
\bottomrule
\end{tabular}
\label{tab:Ex1}
\end{table}




\begin{table}[htbp!]
\small
\centering
\caption{Example 1. History of convergence test. Nearly incompressible material $(E,\nu) = (3,0.499)$.}
\begin{tabular}{@{\hskip .1in}ccl@{\hskip .1in}cl@{\hskip .1in}cc@{\hskip .1in}cc@{\hskip .1in}cc} \toprule
& & &\multicolumn{2}{c}{$\sigma_{h}$}&\multicolumn{2}{c}{$\vect{v}_{s,h}$}&\multicolumn{2}{c}{$\vect{v}_{f,h}$}&\multicolumn{2}{c}{$p_{h}$}\\ 
\cmidrule(lr){4-5} \cmidrule(lr){6-7} \cmidrule(lr){8-9} \cmidrule(lr){10-11}
$k$ & $h$ & & error & e.o.c. & error & e.o.c. & error & e.o.c.& error & e.o.c.\\ \midrule
\multirow{7}{*}{1} 
 & $2^{-1}$    & & 1.53e-4 & $-$  & 8.50e+0 & $-$  & 9.77e-1 & $-$  & 1.85e+0 & $-$  \\
 & $2^{-2}$    & & 3.84e-5 & 1.99 & 4.93e+0 & 0.79 & 3.73e-1 & 1.39 & 4.53e-1 & 2.03 \\
 & $2^{-3}$    & & 1.01e-5 & 1.93 & 1.38e+0 & 1.83 & 1.23e-1 & 1.60 & 1.02e-1 & 2.16 \\
 & $2^{-4}$    & & 3.09e-6 & 1.71 & 3.20e-1 & 2.11 & 3.38e-2 & 1.86 & 2.20e-2 & 2.21 \\
 & $2^{-5}$    & & 9.53e-7 & 1.70 & 3.88e-2 & 3.04 & 8.58e-3 & 1.98 & 4.28e-3 & 2.36 \\
 & $2^{-6}$    & & 2.57e-7 & 1.89 & 4.35e-3 & 3.16 & 2.19e-3 & 1.97 & 9.19e-4 & 2.22 \\
 & $2^{-7}$    & & 6.62e-8 & 1.96 & 5.14e-4 & 3.08 & 5.43e-4 & 2.01 & 2.24e-4 & 2.04 \\
\midrule
\multirow{7}{*}{2} 
 & $2^{-1}$   & & 3.47e-5 & $-$  & 2.37e+0 & $-$  & 4.15e-1 & $-$  & 4.38e-1 & $-$  \\
 & $2^{-2}$   & & 5.19e-6 & 2.74 & 4.50e-1 & 2.39 & 7.97e-2 & 2.38 & 4.89e-2 & 3.16 \\
 & $2^{-3}$   & & 6.87e-7 & 2.92 & 6.05e-2 & 2.90 & 1.46e-2 & 2.45 & 5.02e-3 & 3.28 \\
 & $2^{-4}$   & & 8.58e-8 & 3.00 & 1.87e-3 & 5.02 & 2.36e-3 & 2.63 & 5.70e-4 & 3.14 \\
 & $2^{-5}$   & & 1.08e-8 & 2.99 & 3.03e-4 & 2.63 & 3.57e-4 & 2.73 & 6.98e-5 & 3.03 \\
 & $2^{-6}$   & & 1.35e-9 & 3.00 & 8.50e-6 & 5.15 & 4.90e-5 & 2.86 & 8.75e-6 & 3.00 \\
 & $2^{-7}$   & & 1.69e-10 & 3.00 & 5.48e-7 & 3.96 & 6.31e-6 & 2.96 & 1.10e-6 & 2.99 \\
\midrule
\multirow{5}{*}{3} 
 & $2^{-1}$   & &3.31e-6      & $-$  &  1.60e-1   & $-$  & 9.01e-2   & $-$  & 6.72e-2  & $-$ \\
 & $2^{-2}$   & &2.09e-7      & 3.99 &  2.46e-2   & 2.70 & 1.24e-2   & 2.89 & 2.98e-3  & 4.49 \\
 & $2^{-3}$   & &1.72e-8      & 3.60 &  9.62e-4   & 4.68 & 1.16e-3   & 3.39 & 1.69e-4  & 4.15 \\
 & $2^{-4}$   & &1.30e-9      & 3.72 &  2.73e-5   & 5.14 & 9.60e-5   & 3.60 & 1.15e-5  & 3.87 \\
 & $2^{-5}$   & &8.71e-11      & 3.91 &  7.91e-7   & 5.11 & 6.75e-6   & 3.83 & 6.88e-7  & 4.07 \\
 \bottomrule
\end{tabular}
\label{tab:Ex1_near_incompressible}
\end{table}

\subsection*{Example 2.: Propagation on isotropic and anisotropic poroelastic  media}
In this example, we present two simulations of poroelastic wave propagation on isotropic and anisotropic media. We replicate the test presented in \cite{Alkhimenkov:2021}. Similar tests are also presented in \cite{Lemoine-Ou-LeVeque:2013} and \cite{laPuente-Dumbser-Kaser-Igel:2008}. 

The computational domain is the two-dimensional rectangle $(-4.675, 4.675)^{2}$ and the material parameters are specified in Table \ref{tab:materials}, specifically we compute using the parameters corresponding to the isotropic sandstone and the anisotropic epoxy glass materials. In the isotropic case, a Gaussian pulse is imposed as the initial condition of the second component of the solid velocity. For the anisotropic case, the Gaussian pulse is imposed in the component $yy$ of the stress and also in the pressure. The Gaussian function used in both cases is given by $A_0 \exp\left(-\left( (x/l_x)^{2} + (y/l_y)^2\right) \right)$, with $A_0 = 1$, and $l_x=l_y = 0.08.$ For both cases, we use triangulation with mesh size parameter $h=9.35/200$  and refine at the origin to properly resolve the initial condition.

\begin{table}[htbp!] \small 
\centering
\caption{Parameters used in the simulations of isotropic, anisotropic and heterogeneous poroelastic models.}
\begin{tabular}{@{}ll@{\hskip 0.2in}cc@{}cc@{}cc@{}}
\toprule
\multicolumn{2}{l}{Material} & Isotropic &&  Anisotropic && \multicolumn{2}{c}{Heterogeneous} 
\\ \cmidrule{3-3} \cmidrule{5-5} \cmidrule{7-8} 
Parameters & Units & Sandstone && Glass-epoxy && Sandstone & Shale \\
\midrule
$c_{11}$    & (GPa)                 & 36     && 39.4   && 36     & 11.9 \\
$c_{13}$    & (GPa)                 & 12     && 1.2    && 12     & 3.9 \\
$c_{33}$    & (GPa)                 & 36     && 13.1   && 36     & 11.9 \\
$c_{55}$    & (GPa)                 & 12     && 3.0    && 12     & 3.9 \\
$s_0$       &  $({\rm GPa})^{-1}$   & 8.75e-2&& 9.8e-2 && 8.75e-2& 6.03e-2 \\
$\alpha$    &  -                    & 0.5    && 0.92   && 0.5    & 0.13\\
$\rho_{11}$ & ($\rm kg/m^3$)        & 2208   && 1660   && 2208   & 2022.8 \\
$\rho_{12}$ & ($\rm kg/m^3$)        & 1040   && 1040   && 1040   & 1040 \\
$(\rho_{22})_{11}$ & ($\rm kg/m^3$) & 10400  && 10400  && 10400  & 13000\\
$(\rho_{22})_{33}$ & ($\rm kg/m^3$) & 18720  && 18720  && 10400  & 13000 \\
$\kappa_{11}$      & ($\rm m^2$)    & 6e{-13}&& 6e{-13}&& 6e{-13} & 1e{-13}  \\
$\kappa_{33}$      & ($\rm m^2$)    & 1e{-13}&& 1e{-13}&& 6e{-13} & 1e{-13}\\
$\eta$      & ($\rm kg/m\cdot s$)   & 1e{-3} && 1e{-3} && 0& 0\\
$\phi$      &  -                    & 0.2    && 0.2    && 0.2    & 0.16\\
$K_s$       & (GPa)                 & 40     && 40     && 40     &  7.6\\
$K_f$       & (GPa)                 & 2.5    && 2.5    && 2.5    &  2.5\\
$K$         & (GPa)                 & 20     && 3.2    && 20     & 6.6\\
\bottomrule
\end{tabular}
\label{tab:materials}
\end{table}


\begin{figure}[htb]
    \centering
    \includegraphics[width=0.49\linewidth]{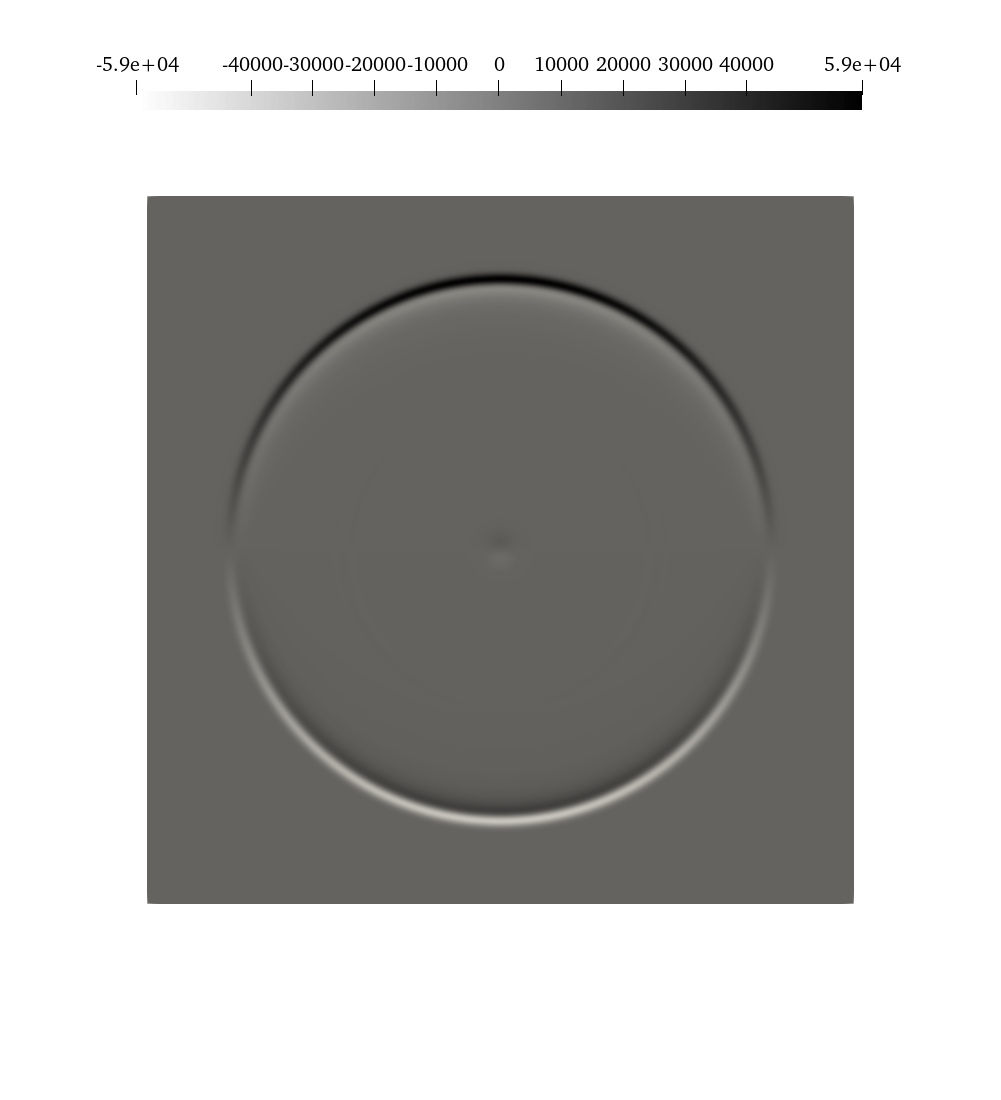}
    \includegraphics[width=0.49\linewidth]{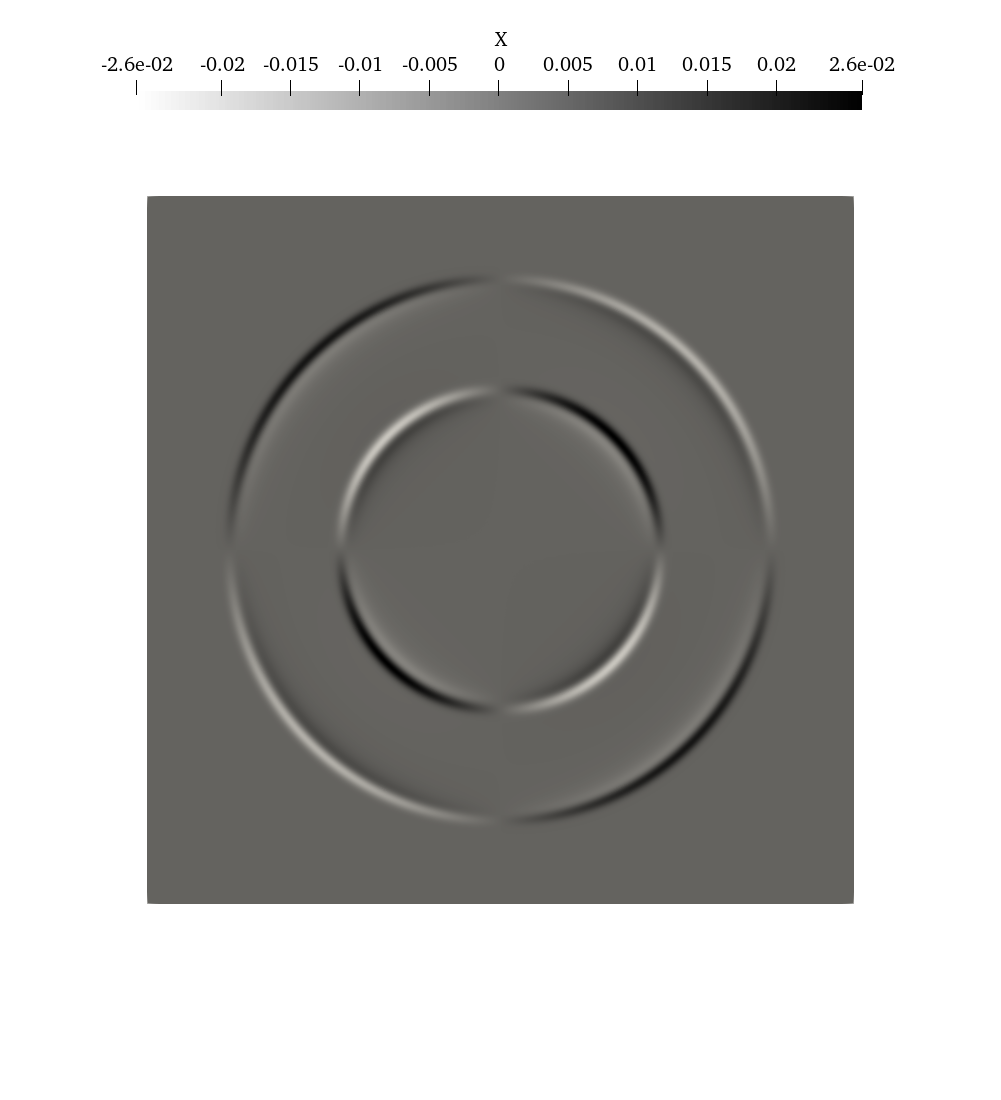}
    \caption{Example 2. Simulation of a Gaussian pulse on isotropic sandstone media. Left: Fluid pressure approximation $p_h$. Right: First component of the solid velocity approximation $(\vect{v}_{s,h})_1$. }
    \label{fig:ex2isotropic}
\end{figure}        
%
\begin{figure}[htb]
    \centering
    \includegraphics[width=0.49\linewidth]{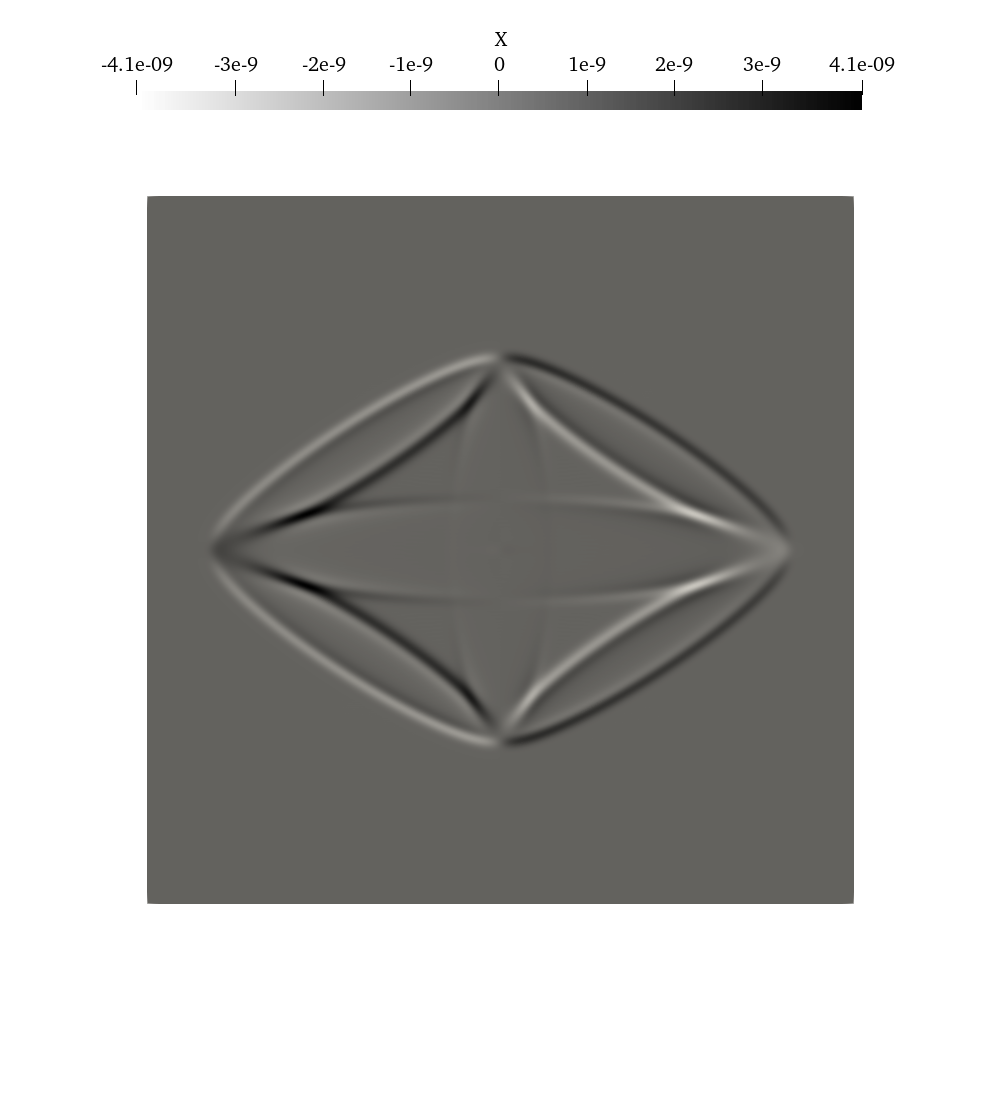}
    \includegraphics[width=0.49\linewidth]{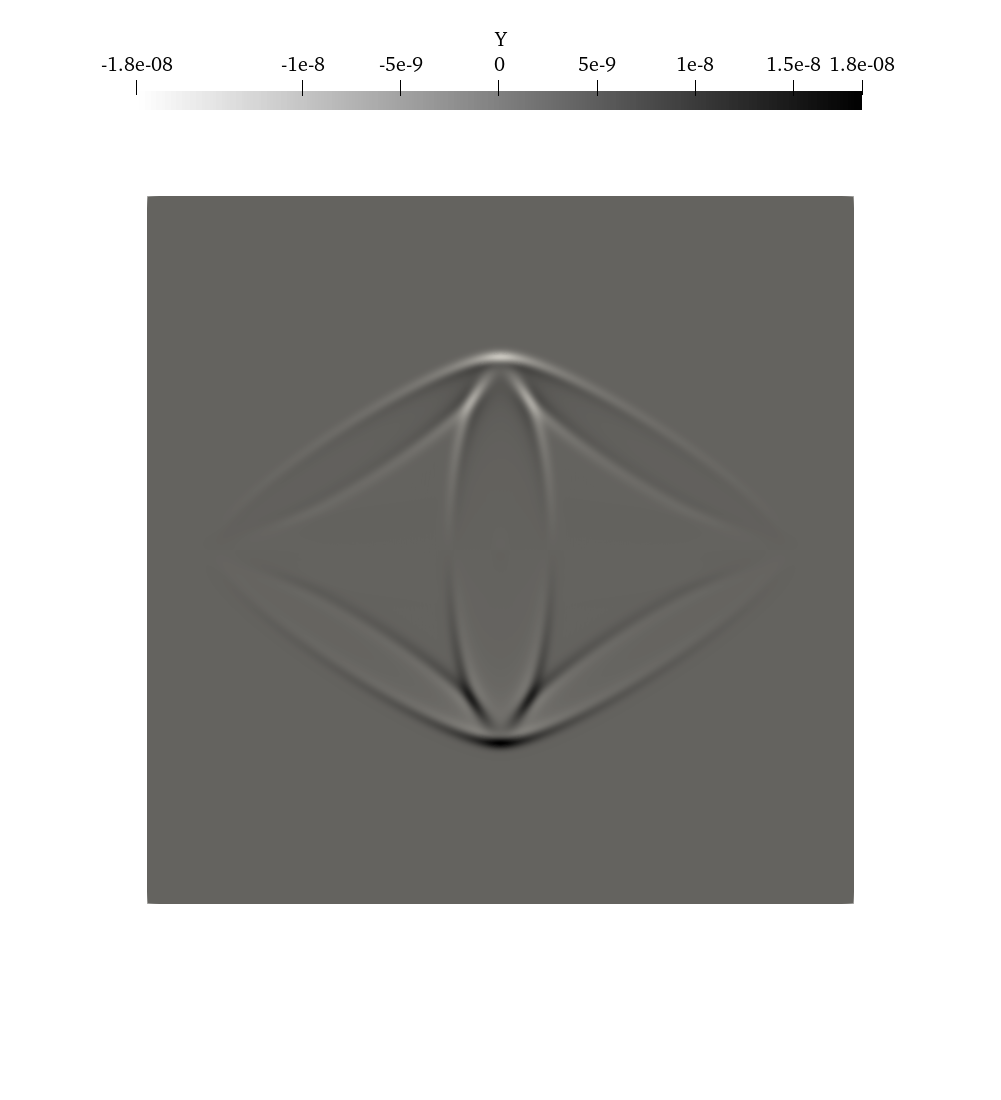}
    \caption{{Example 2. Simulation of a Gaussian pulse on anisotropic epoxy glass media. Left: First component of the solid velocity approximation $(\vect{v}_{s,h})_1$. Right: Second component of the solid velocity approximation $(\vect{v}_{s,h})_2$.}}
    \label{fig:ex2anisotropic}
\end{figure}   
%


{
Figure \ref{fig:ex2isotropic} illustrates the numerical solution for the isotropic sandstone. Our HDG method accurately resolves the complex wavefield, specifically identifying the fast P-wave and the dissipative slow P-wave characteristic of Biot's theory. The wave patterns and arrival times are in excellent agreement with the physical properties of the media, as documented in \cite{Alkhimenkov:2021}.}

{In the anisotropic case (Figure \ref{fig:ex2anisotropic}), the simulation captures the distinct directional dependence of the solid velocity components. The HDG formulation, by employing a symmetric stress approximation, ensures that the anisotropic wave surfaces are represented without non-physical spurious oscillation, a common challenge in poroelastic simulations.}

\subsection*{Example 3: Wave propagation on heterogeneous poroelastic media }
{In our final numerical experiment, we extend the application of our method to a heterogeneous poroelastic medium. Simulating wave propagation across material interfaces---specifically a boundary separating isotropic shale and sandstone---presents significant numerical challenges due to the complex reflection and transmission of primary and secondary waves. This example illustrates the robustness of our HDG formulation in handling abrupt changes in material properties within complex geophysical domains.} The computational domain is the rectangle $\Omega= [0,1500]\times [0,1400]$ and the boundary between the two materials is located at the horizontal line $y=700$. Below this line, the material is (isotropic) shale, and above (isotropic) sandstone. We consider the wave propagation of an initial condition set as a Gaussian pulse centered at the point $(x,y) = (750,900)$ in the sandstone material. Zero Dirichlet boundary condition is considered. 


{
The simulation in Figure \ref{fig:ex3heterogeneous} highlights the robustness of the HDG scheme in the presence of material discontinuities. The wavefield transitions smoothly across the interface at $y=700$, with clearly defined reflected and transmitted phases. Notably, the numerical solution remains stable and free of spurious oscillations at the boundary between shale and sandstone, demonstrating that the hybridization variables effectively communicate the interface conditions.}
\begin{figure}[ht]
    \centering
    \includegraphics[width=0.49\linewidth]{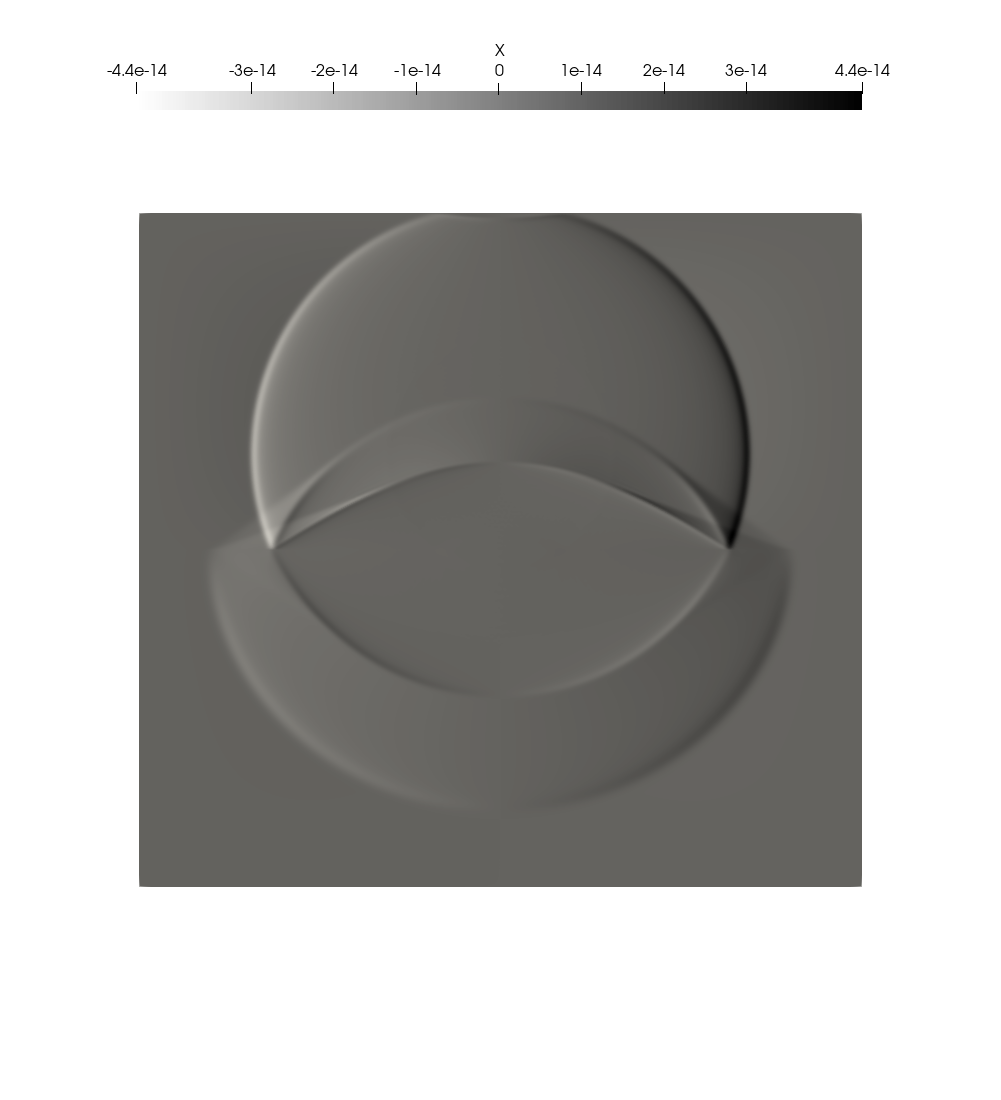}
    \includegraphics[width=0.49\linewidth]{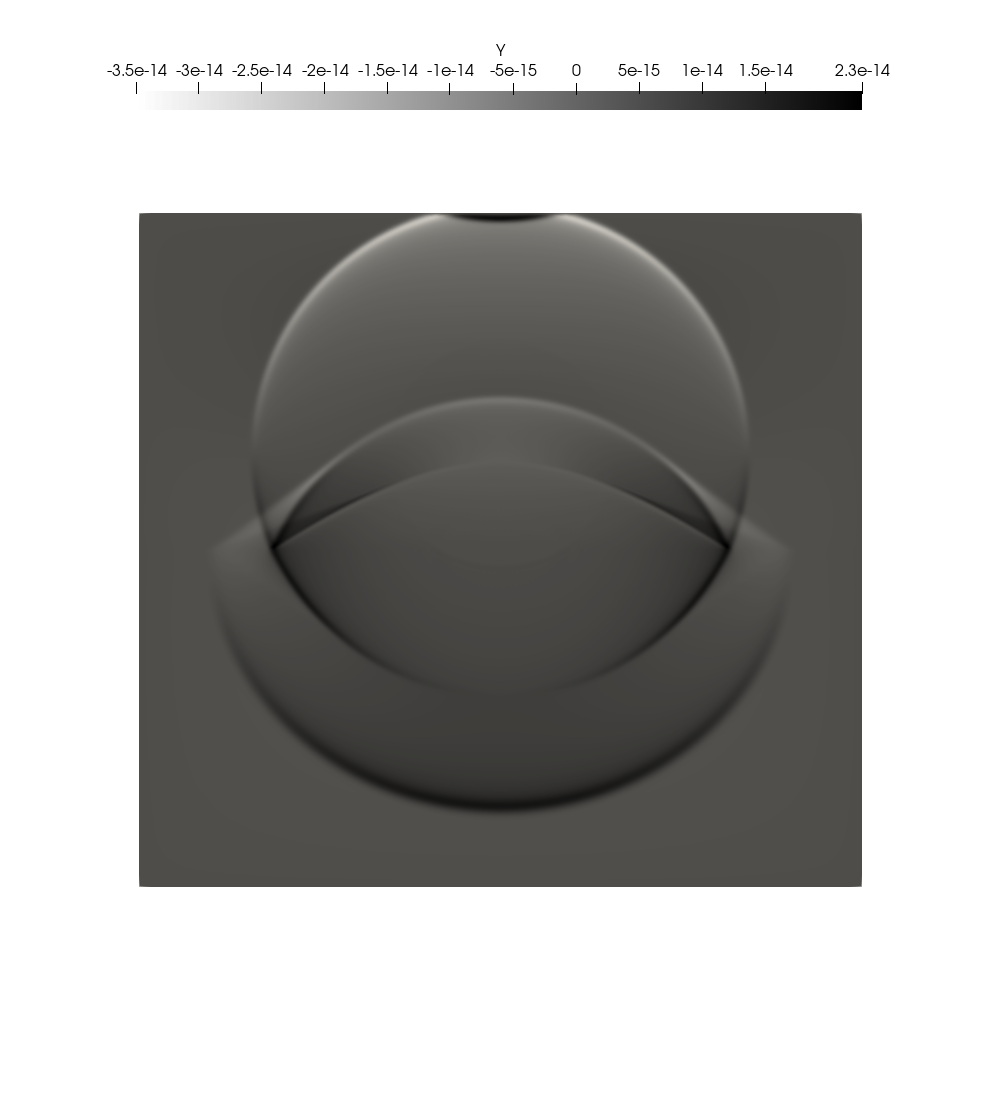}
    \caption{Example 3. Simulation of a Gaussian pulse on heterogeneous isotropic media. Left: First component of the solid velocity approximation $(\vect{v}_{s,h})_1$. Right: Second component of the solid velocity approximation $(\vect{v}_{s,h})_2$. }
    \label{fig:ex3heterogeneous}
\end{figure}

\section{Conclusions}
\label{sec:conclusion}

In this paper, we developed new HDG methods for poroelastic wave equations with error analyses of semidiscrete and fully discrete schemes. In our error analyses, convergence rates of the errors are optimal, except for the solid displacement variable; however, we also observe optimal convergence rates for the solid displacement in numerical experiments. Numerical experiments with more physically realistic settings are consistent with the numerical results of some previous studies of poroelastic wave equations. Future research directions can be applications to more physically relevant settings and the development of efficient solver algorithms.

\providecommand{\bysame}{\leavevmode\hbox to3em{\hrulefill}\thinspace}
\providecommand{\MR}{\relax\ifhmode\unskip\space\fi MR }
\providecommand{\MRhref}[2]{%
  \href{http://www.ams.org/mathscinet-getitem?mr=#1}{#2}
}
\providecommand{\href}[2]{#2}

\end{document}